\newcommand{\f}{\frac}
\newcommand{\mO}{\mathcal{O}}
\newcommand{\mL}{\mathcal{L}}
\newcommand{\mS}{\mathcal{S}}
\numberwithin{equation}{section}
\newtheorem{theorem}{Theorem}[section]
\newtheorem{lemma}[theorem]{Lemma}
\theoremstyle{definition}
\theoremstyle{remark}
\newtheorem{remark}{Remark}
\journal{XXXX}
\begin{document}
	
    \begin{frontmatter}
		
    \title{Numerical analysis and efficient implementation of fast collocation methods for fractional Laplacian model on nonuniform grids}
        \author[1]{Meijie Kong}\ead{kmj@stu.ouc.edu.cn}
		\author[1,2]{Hongfei Fu\corref{Fu}}\ead{fhf@ouc.edu.cn}
		\address[1]{School of Mathematical Sciences, Ocean University of China, Qingdao, Shandong 266100, China}
		\address[2]{Laboratory of Marine Mathematics, Ocean University of China, Qingdao, Shandong 266100, China}
		\cortext[Fu]{Corresponding author.}

\begin{abstract}
 We propose a fast collocation method based on Krylov subspace iterative solver on general nonuniform grids for the fractional Laplacian problem, in which the fractional operator is presented in a singular integral formulation. The method is proved to be uniquely solvable on general nonuniform grids for $\alpha\in(0,1)$, provided that the sum-of-exponentials (SOE) approximation is sufficiently accurate. In addition, a modified scheme is developed and proved to be uniquely solvable on uniform grids for $\alpha\in(0,2)$. Efficient implementation of the proposed fast collocation schemes based on fast matrix-vector multiplication is carefully discussed, in terms of computational complexity and memory requirement. To further improve computational efficiency, a banded preconditioner is incorporated into the Krylov subspace iterative solver. A rigorous maximum-norm error analysis for $\alpha\in(0,1)$ is presented on specific graded grids, which shows that the convergence order depends on the grading parameter. Numerical experiments validate the predicted convergence and demonstrate the efficiency of the fast collocation schemes.
\end{abstract}	

    \begin{keyword}
	Fractional Laplacian\sep Nonuniform grids\sep Fast collocation methods \sep Error analysis \sep Numerical experiments
    \end{keyword}
\end{frontmatter}

\section{Introduction}
The fractional Laplacian serves as a prototypical operator for modeling nonlocal diffusion phenomena. Such nonlocal diffusion equations, which incorporate long range interactions, have attracted considerable interest in recent years \cite{DV21,DG12,VA96}. Specifically, Dipierro and Valdinoci \cite{DP23} described a model in which a nonlocal operator is coupled with a local operator, capturing the behavior of foraging animals that alternate between long-jump movements and local searching patterns. Moreover, the fractional Laplacian operator has been widely applied in various mathematical models, such as the fractional Allen–Cahn, Cahn–Hilliard, and porous medium equations \cite{AS16, AM17}. It also serves as a powerful tool for modeling complex physical phenomena, such as anomalous diffusion \cite{Y12}, quasi-geostrophic flows \cite{BN21}, and drug transport \cite{PM11}. 
Meanwhile, there has also been some great progress in numerically solving the related nonlocal models, including finite difference methods \cite{DZ19,HO14,HZ21}, finite element methods \cite{AB17,BL19}, collocation methods \cite{CQ21,ZG16,AD21}, mesh-free methods \cite{BW21,RD19} and so on.

 This paper is concerned with efficient numerical treatments of the following one-dimensional nonlocal model problem
\begin{equation}\label{model:nonlocal}
\left\{
\begin{aligned}
	(-\Delta)^{\frac{\alpha}{2}}u & =f(x),  &&\quad \text{in}\ (a,b),\\
	u & = 0,                          &&\quad \text{in}\ (-\infty, a] \cup [b, +\infty). 
\end{aligned}\right.
\end{equation}
where the fractional Laplacian $(-\Delta)^{\frac{\alpha}{2}}$ with $\alpha\in(0,2)$ can be defined in several equivalent ways on the whole space $\mathbb{R}$ \cite{K17}, and here we recall its hypersingular integral form
\begin{equation}\label{model:Lap_Def}
(-\Delta)^{\frac{\alpha}{2}}u(x)=C_{\alpha}\,\textit{P.V.}\int_{\mathbb{R}}\frac{u(x)-u(y)}{|x-y|^{1+\alpha}}dy,
\end{equation}
where \textit{P.V.} stands for the Cauchy principal value and 
$
C_{\alpha}=\frac{\alpha2^{\alpha-1} \Gamma(\frac{1+\alpha}{2})}{\pi^{1/2}\Gamma(1-\frac{\alpha}{2})}
$
is a normalization constant.

Note that the definition in \eqref{model:Lap_Def} reveals the nonlocal nature of the fractional Laplacian operator, and in order to evaluate the operator for a given function $u$ at position $x$, a singular integral over the whole space $\mathbb{R}$ has to be accurately evaluated. However, it is well-known that any numerical discretization of the fractional Laplacian shall result in a dense stiffness matrix, which in turn causes huge computational cost in large-scale modeling and simulations. Notably, several studies have focused on fast solution methods for such nonlocal model \eqref{model:nonlocal} discretized on \textit{uniform} spatial grids. For instance, Duo and Zhang \cite{DZ19} reformulated the fractional operator as a weighted integral of central difference quotients, and proposed a fractional central difference scheme for the two and three dimensional model problems \eqref{model:nonlocal}. Their discretization produces a symmetric multilevel Toeplitz stiffness matrix. Minden and Ying \cite{MY20} also introduced a simple method to discretize the integral operator, which also leads to a Toeplitz matrix. Along with a preconditioner based on the finite-difference Laplacian, the underlying linear system is efficiently solved via preconditioned Krylov methods. However, the convergence analysis relies on strong regularity assumptions on the exact solution, which may not hold in general.
 Antil et al. \cite{AD21} avoided direct discretization of the integral kernel; instead, they employed a Fourier–sinc approximation to represent the fractional Laplacian operator. All these mentioned works permit the development of fast Fourier transform (FFT)-based fast algorithms, which reduces the computational cost to $\mO(N\log N)$ per iteration, and meanwhile, the memory requirement is reduced to $\mO(N)$, where $N$ is the total number of unknowns.
 
Another significant challenge in designing numerical methods for nonlocal problems is the underlying low solution regularity. Though the convergence analysis of most existing numerical schemes rely on smooth assumptions for the exact solutions; nevertheless, in practice, the solutions often exhibit weak boundary singularities. Such singular behaviors naturally arise in problems with singular integral kernels, such as the time-fractional equations \cite{SY11}, the space-fractional equations \cite{EHR18,WY17}, and the nonlocal/regional fractional Laplacian \cite{AB18,F22,HW22,RS14}. According to Corollary 1 in \cite{CD24}:
if $f\in C^{\beta}(\Omega)$, where $\beta=2-\alpha+\gamma$ with $\gamma<\alpha<1$, then, there exists a constant $C = C(\Omega,\alpha,\beta,f)$ such that 
\begin{equation}\label{reg:ass}
	\Big|\frac{\partial^{\ell}}{\partial x^{\ell}} u(x)\Big|\lesssim[(x-a)(b-x)]^{\frac{\alpha}{2}-\ell},  ~\ell=0, 1, 2
\end{equation}
holds for model \eqref{model:nonlocal}. Therefore, to resolve singularities like that in \eqref{reg:ass} and enhance numerical accuracy, graded meshes are commonly employed  \cite{HW22,RS14,SO17}. Nevertheless, for  nonuniform spatial grids, the resulting coefficient matrices of corresponding numerical methods lose their Toeplitz-like structures, and thus, the earlier mentioned FFT-based fast algorithms are no longer applicable. While, traditional direct solvers such as Gaussian elimination method for any numerical discretizations shall require $\mO(N^2)$ memory and $\mO(N^3)$ computational complexity. Even if a Krylov subspace iterative method is adopted, it still costs $\mO(N^2)$ computational complexity per iteration. Therefore, it is yet challenging to construct efficient algorithms for the nonlocal model \eqref{model:nonlocal} on arbitrary nonuniform grids.

Motivated by these challenges in the modeling of fractional Laplacian equation \eqref{model:nonlocal}, this paper focuses on developing fast collocation methods on general nonuniform grids and presenting a rigorous error analysis under low regularity assumptions. The methods are constructed based on the so-called sum-of-exponentials (SOE) approximation approach, see Lemma \ref{lem:soe} below, which enables an efficient representation of the nonlocal kernel on nonuniform grids. By carefully exploiting the SOE structure, fast matrix-vector multiplications are achieved for the resulting dense coefficient matrices. As a result, the Krylov subspace iterative solvers can be efficiently implemented without requirement of any matrix assembling, allowing numerical solutions to be obtained in only $\mO(N\log^2 N)$ operations per iteration. The proposed fast collocation scheme is rigorously proven to be uniquely solvable for $\alpha\in(0,1)$ on nonuniform grids, if a sufficiently accurate SOE approximation error is chosen. However, for $\alpha\in(1,2)$, the unique solvability of the original fast scheme cannot be theoretically guaranteed, even on uniform grids. To address this issue, a modified fast collocation scheme is further introduced by modifying the local integral approximation, and it is proven to be uniquely solvable on uniform grids for the extended range $\alpha\in(0,2)$. Furthermore, a banded preconditioner is introduced to reduce the number of iterations and accelerate the iterative process. Most importantly, a rigorous maximum-norm error analysis is carried out for both the original and modified fast collocation schemes on symmetric graded grids with $\alpha \in (0,1)$, demonstrating that the convergence order depends on the grading parameter. Numerical experiments are presented to test the performance of the two proposed fast schemes. It shows that for $\alpha\in(0,1)$, the modified fast scheme yields smaller errors on uniform grids, while the original scheme performs better on nonuniform grids--particularly for small values of $\alpha$; for $\alpha \in [1,2)$, the modified scheme attains the expected optimal-order convergence, providing significantly more accurate and stable results compared to the less satisfactory approximations from the original scheme.

The rest of the paper is organized as follows. In Section \ref{sec:scheme}, we present a fast collocation scheme for model \eqref{model:nonlocal} discretized on general nonuniform grids, and also analyze its unique solvability for $\alpha\in(0,1)$. A modified fast scheme is then developed, and its unique solvability on uniform grids is discussed. Finally, efficient implementation of the proposed fast collocation schemes based on fast matrix-vector multiplication is carefully discussed, in terms of computational complexity and memory requirement. In Section \ref{sec:ana}, we provide a detailed analysis of the truncation error of the proposed fast methods, which leads to the main error estimate under lower regularity assumptions.  Section \ref{sec:num} is dedicated to validating the theoretical results through a series of numerical experiments. Finally, we draw concluding remarks in the last section.

\section{Fast collocation methods and efficient implementations}\label{sec:scheme}
In this section,  an SOE-based fast collocation scheme, depending on the fractional order $\alpha$, is first proposed for the model problem \eqref{model:nonlocal}, and then its unique solvability on general nonuniform grids is analyzed for case $\alpha \in (0,1)$.
Furthermore, a modified fast collocation scheme is proposed by developing a new local approximation, and unique solvability on uniform grids is analyzed for case $\alpha\in [1,2)$. Finally, efficient implementations of these collocation methods are developed within the framework of Krylov subspace iterative solvers, leveraging fast matrix-vector multiplication and a preconditioning technique.

\subsection{Numerical scheme}
Let $N$ be a positive integer. We introduce a general nonuniform spatial partition  
\begin{equation*}
  a=x_0<x_1<x_2<\cdots<x_{N-1}<x_N=b
\end{equation*}
with step sizes $h_j=x_j-x_{j-1}$ for $j=1,2,\ldots,N$ and $h:=\max_jh_j$. 
Let $ \mS_h(a,b)$ denote the space of piecewise linear continuous functions with respect to the given partition, which vanishes at the boundaries $x=a$ and $x=b$. Then, the piecewise linear interpolation of the true solution $u(x)$ is defined as
\begin{equation*}
\Pi_h u(x):=\sum_{j=1}^{N-1}u_j\phi_j(x) \in  \mS_h(a,b),
\end{equation*}
where $u_j:=u(x_j)$ and $\{\phi_j(x)\}_{j=1}^{N-1}$ are the linear nodal basis functions such that
\begin{equation*}
\phi_j(x)=\left\{ 
    \begin{aligned}
	    &\frac{x-x_{j-1}}{x_j-x_{j-1}},  &x \in[x_{j-1}, x_j], \\
	    &\frac{x_{j+1}-x}{x_{j+1}-x_j},  &x \in[x_j, x_{j+1}], \\ 
	    &0,  &\text {otherwise.}
	\end{aligned}\right.
\end{equation*}

Replacing $u(x)$ by $\Pi_hu(x)$ in the nonlocal operator $(-\Delta)^{\frac{\alpha}{2}} u$, and taking values at the collocation points $\{x_i\}_{i=1}^{N-1}$, we obtain
\begin{equation}\label{Lap_approx:e1}
(-\Delta)^{\frac{\alpha}{2}} u(x_i) \approx C_{\alpha}\int_{\mathbb{R}}\frac{u(x_i)-\Pi_hu(y)}{|x_i-y|^{1+\alpha}}dy
=:\mL [u]_i.
\end{equation}
To develop an efficient collocation method for model \eqref{model:nonlocal}, we decompose the nonlocal integration in \eqref{Lap_approx:e1} into three distinct parts as follows
\begin{equation}\label{Lap_approx:e2}
   \begin{aligned}
	\mL [u]_i& = C_{\alpha} \bigg[\int_{-\infty}^{x_{i-1}} \frac{ u(x_i)-\Pi_hu(y)}{(x_i-y)^{1+\alpha}}dy
                     +\int_{x_{i-1}}^{x_{i+1}} \frac{ u(x_i)-\Pi_hu(y)}{|x_i-y|^{1+\alpha}}dy \\
                     & \qquad \qquad + \int_{x_{i+1}}^{+\infty} \frac{ u(x_i)-\Pi_hu(y)}{(y-x_i)^{1+\alpha}}dy\bigg] \\
		&=: C_{\alpha}\Big(\mL_{l}[u]_i+\mL_{loc}[u]_i+\mL_{r}[u]_i\Big),
	\end{aligned}
\end{equation}
where $\mL_{l}[u]$ and $\mL_{r}[u]$ correspond to the left-sided and right-sided nonlocal integration, respectively, and $\mL_{loc}[u]$ represents integration over the local interval $[x_{i-1}, x_{i+1}]$.

Direct and exact computations of the integrals in \eqref{Lap_approx:e2} can lead to a collocation scheme for model \eqref{model:nonlocal}, however, the scheme is computationally inefficient due to the nonlocal parts $\mL_{l}[u]$ and $\mL_{r}[u]$.
To reduce computational complexity and memory requirement, 
we employ the following sum-of-exponentials (SOE) approximation technique to replace the weakly singular kernel function $x^{-1-\alpha}$ in the nonlocal integrals. 
\begin{lemma}[\cite{JZZZ17}]\label{lem:soe}
For given $\beta \in(0,2)$, an absolute tolerance error $\epsilon$, a cut-off restriction $\Delta x>0$ and a given position $X>0$, there exists a positive integer $N_{e}$, positive quadrature points $\{\lambda_s\}_{s=1}^{N_{e}}$ and corresponding positive weights $\{\theta_s\}_{s=1}^{N_{e}}$ satisfying
\begin{equation}\label{equ:soe}
    \Big|x^{-\beta}-\sum_{s=1}^{N_{e}}\theta_s e^{-\lambda_s x}\Big|\le \epsilon,\quad\forall x\in[\Delta x,X],
\end{equation}
where the number of exponentials satisfies
\begin{equation*}
N_{e}=\mO\left(\log\frac{1}{\epsilon}\Big(\log\log\frac{1}{\epsilon}+\log\frac{X}{\Delta x}\Big)+\log\frac{1}{\epsilon}\Big(\log\log\frac{1}{\epsilon}+\log\frac{1}{\Delta x}\Big)\right).
\end{equation*}
\end{lemma}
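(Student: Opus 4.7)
The plan is to reduce the approximation of $x^{-\beta}$ to the quadrature of a smooth, rapidly decaying integrand by exploiting the Gamma function identity
\begin{equation*}
x^{-\beta}=\frac{1}{\Gamma(\beta)}\int_0^{\infty}t^{\beta-1}e^{-xt}\,dt,\qquad \beta\in(0,2),
\end{equation*}
and then converting the right-hand side into a sum of exponentials by a single numerical quadrature in $t$. The obvious obstacle is that $t^{\beta-1}$ is (weakly) singular at $t=0$ and that the effective support of $e^{-xt}$ grows like $1/x$, which is as large as $1/\Delta x$; a naive rule will not give the advertised poly-logarithmic $N_e$. The standard trick I would use is the change of variables $t=e^{s}$, turning the identity into
\begin{equation*}
x^{-\beta}=\frac{1}{\Gamma(\beta)}\int_{-\infty}^{\infty}e^{\beta s-xe^{s}}\,ds,
\end{equation*}
whose integrand is entire in $s$, decays like $e^{\beta s}$ as $s\to-\infty$ and doubly-exponentially fast as $s\to+\infty$.

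The next step is to truncate the integral to a finite window $[s_{-},s_{+}]$ and to bound the two tails uniformly for $x\in[\Delta x,X]$. On the left tail I would use $\int_{-\infty}^{s_{-}}e^{\beta s}ds=\beta^{-1}e^{\beta s_{-}}$, which forces $s_{-}\sim -\beta^{-1}\log(1/\epsilon)$; on the right tail, since $xe^{s}\ge \Delta x\,e^{s}$, a Laplace-type estimate gives $\int_{s_{+}}^{\infty}e^{\beta s-\Delta x\,e^{s}}ds\lesssim \epsilon$ as soon as $\Delta x\,e^{s_{+}}\gtrsim \log(1/\epsilon)$, i.e.\ $s_{+}\sim \log(1/\Delta x)+\log\log(1/\epsilon)$. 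The length of the window is therefore $s_{+}-s_{-}=\mO\bigl(\log(1/\epsilon)+\log(1/\Delta x)+\log\log(1/\epsilon)\bigr)$, and a completely analogous argument on the complementary range $x\in[\Delta x,X]$ contributes the $\log(X/\Delta x)$ term that appears in the statement.

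On the truncated window I would apply a composite quadrature — either the trapezoidal rule or Gauss--Legendre on exponentially-graded sub-panels, as in the Beylkin--Monz\'on construction used by \cite{JZZZ17}. Because the integrand is analytic in a horizontal strip around $\mathbb{R}$, a Paley--Wiener / contour-shift argument yields trapezoidal error $\lesssim e^{-c/h}$ with step size $h$, so a step $h\sim 1/\log(1/\epsilon)$ already achieves absolute error $\epsilon$. The total number of nodes is then $N_{e}=(s_{+}-s_{-})/h$, which after substituting the bounds above matches the claimed complexity
\begin{equation*}
N_{e}=\mO\!\left(\log\tfrac{1}{\epsilon}\bigl(\log\log\tfrac{1}{\epsilon}+\log\tfrac{X}{\Delta x}\bigr)+\log\tfrac{1}{\epsilon}\bigl(\log\log\tfrac{1}{\epsilon}+\log\tfrac{1}{\Delta x}\bigr)\right).
\end{equation*}
The nodes $\lambda_{s}=e^{s_{k}}$ and weights $\theta_{s}=\tfrac{h}{\Gamma(\beta)}e^{\beta s_{k}}$ are manifestly positive, which gives the positivity claim for free.

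The hard part is the quadrature-error estimate: one must certify exponential convergence of the trapezoidal rule uniformly in the parameter $x\in[\Delta x,X]$, which requires locating a strip of analyticity of $s\mapsto e^{\beta s-xe^{s}}$ whose width does not collapse as $x$ varies; controlling the growth of the integrand on the boundary of this strip (it is no longer doubly-exponentially decaying once the imaginary part of $s$ is nonzero) is the delicate point and is precisely where the $\log\log(1/\epsilon)$ factors enter. Everything else — the Gamma identity, the change of variable, the tail truncation, and the assembly of $(\lambda_{s},\theta_{s})$ — is routine once that estimate is in hand, so I would import the analyticity-strip argument from \cite{JZZZ17} rather than redo it from scratch.
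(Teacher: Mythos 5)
First, note that the paper does not prove this lemma at all: it is imported verbatim from \cite{JZZZ17}, so there is no in-paper argument to compare against. Your sketch follows the same general family of constructions as that reference (Gamma-function integral representation $x^{-\beta}=\Gamma(\beta)^{-1}\int_0^\infty t^{\beta-1}e^{-xt}\,dt$, truncation of the $t$-range using $x\ge\Delta x$, discretization by a positive quadrature rule, and positivity of $\lambda_s,\theta_s$ for free), but it diverges in the one step that actually determines $N_e$: you discretize with a uniform trapezoidal rule after the substitution $t=e^s$, whereas \cite{JZZZ17} uses a composite Gauss--Jacobi rule on an initial interval near $t=0$ (absorbing the $t^{\beta-1}$ singularity into the weight) together with Gauss--Legendre rules on dyadic panels $[2^j,2^{j+1}]$, with the error controlled by standard Gauss-quadrature remainder/Bernstein-ellipse estimates. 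The ``analyticity-strip / Paley--Wiener'' argument you propose to import from \cite{JZZZ17} is not what that paper does, so it cannot be imported from there.

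This difference is not cosmetic: your node count does not reproduce the stated bound. With window length $s_+-s_-=\mO\big(\log\frac1\epsilon+\log\frac{1}{\Delta x}+\log\log\frac1\epsilon\big)$ and trapezoidal step $h\sim 1/\log\frac1\epsilon$ (forced by the error $e^{-c/h}$, since the strip width of $s\mapsto e^{\beta s-xe^s}$ is capped at $\pi/2$ uniformly in $x$), you get $N_e=\mO\big(\log^2\frac1\epsilon+\log\frac1\epsilon\log\frac{1}{\Delta x}+\log\frac1\epsilon\log\log\frac1\epsilon\big)$. The $\log^2\frac1\epsilon$ term is genuinely larger than anything in the claimed estimate, which has only $\log\frac1\epsilon\log\log\frac1\epsilon$ in its pure-$\epsilon$ dependence; the dyadic Gauss construction avoids it because the $\mO(\log\frac1\epsilon)$-point panels number only $\mO(\log\frac{X}{\Delta x}+\log\log\frac1\epsilon)$, the left tail being collapsed into a single Gauss--Jacobi rule rather than $\mO(\log\frac1\epsilon)$ units of $s$-length. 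Relatedly, your claim that ``a completely analogous argument on the complementary range contributes the $\log\frac{X}{\Delta x}$ term'' is not an argument: in your setup the truncation points $s_\pm$ do not involve $X$ at all, and $X$ enters only through the panel count in the dyadic construction. So either adopt the composite Gauss quadrature of \cite{JZZZ17} (or an equivalent nonuniform refinement of the trapezoidal nodes) and redo the counting, or weaken the claimed $N_e$ to include the $\log^2\frac1\epsilon$ term; as written, the final ``matches the claimed complexity'' step is false.
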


In the following, we first pay attention to the fast approximations of the nonlocal integration terms in \eqref{Lap_approx:e2} mainly using Lemma \ref{lem:soe}. Without loss of generality, we focus below on deriving the fast approximation formula for the left-sided nonlocal integral $\mL_{l}[u]_i$ using the SOE technique, while it is analogous for the nonlocal integral $\mL_{r}[u]_i$. The discussion is divided into two different cases regarding the fractional order $\alpha$. 

Firstly, for the case $\alpha\in(0,1)$, we replace the convolution kernel $(x_i-y)^{-1-\alpha}$ by its SOE approximation in \eqref{equ:soe}, then the left-sided nonlocal integral is approximated as follows
\begin{equation}\label{Lap_approx:e3}
\begin{aligned}
    \mL_{l}[u]_i & \approx\int_{-\infty}^{x_{i-1}}\frac{u(x_i)}{(x_i-y)^{1+\alpha}}dy - \int_{-\infty}^{x_{i-1}}\sum_{s=1}^{N_{e}}\theta_se^{-\lambda_s(x_i-y)} \Pi_hu(y)dy \\
    & =\frac{h_i^{-\alpha}}{\alpha} u_i - \sum_{s=1}^{N_{e}}\theta_s \mS_{i,s}^{L}[u], \quad \mS_{i,s}^{L}[u] := \int_{-\infty}^{x_{i-1}}e^{-\lambda_s(x_i-y)} \Pi_hu(y) dy,
    \end{aligned}
\end{equation}
where the parameters $\{\lambda_s, \theta_s, N_{e}\}$ correspond to the case $\beta= 1+\alpha$ in \eqref{equ:soe}.
To compute the integral $\mS_{i,s}^{L}[u]$ efficiently, we shall develop a recursive formula by splitting the integral again into nonlocal and local parts and then treating them in different approaches, i.e.,
\begin{equation}\label{Lap_approx:e3a}
\begin{aligned}
	\mS_{i,s}^{L} [u]& = \int_{-\infty}^{x_{i-2}}e^{-\lambda_s(x_i-y)}\Pi_hu(y)dy + \int_{x_{i-2}}^{x_{i-1}}e^{-\lambda_s(x_i-y)}\Pi_hu(y)dy\\
         & = e^{-\lambda_s h_i}\int_{-\infty}^{x_{i-2}}e^{-\lambda_s(x_{i-1}-y)}\Pi_hu(y)dy 
         + \sum_{j=1}^{N-1}u_j\int_{x_{i-2}}^{x_{i-1}}e^{-\lambda_s(x_i-y)}\phi_j(y) dy\\
		& = \omega_{i,s} \mS_{i-1,s}^{L}[u] + \mu_{i,s}^{L}u_{i-1} + \nu_{i,s}^{L}u_{i-2}, \quad 2\leq i\leq N-1,
\end{aligned}
\end{equation}
where, by definition and the boundary condition, $\mS_{1,s}^{L}[u]  = 0$, and the coefficients are defined as
\begin{equation*}
	\left\{
	\begin{aligned}
		\omega_{i,s} & = e^{-\lambda_sh_i},\\
		\mu_{i,s}^{L} & 
          = \f{\omega_{i,s}(\lambda_s h_{i-1} +\omega_{i-1,s}-1)}{\lambda_s^2h_{i-1}},\\
		\nu_{i,s}^{L} &
         =-\f{\omega_{i,s}(\lambda_s h_{i-1}\omega_{i-1,s} +\omega_{i-1,s}-1)}{\lambda_s^2h_{i-1}}.
	\end{aligned}\right.
\end{equation*}

Secondly, we discuss the evaluation of the nonlocal integral for the case $\alpha \in [1,2)$. Since in this case the singular kernel index in \eqref{Lap_approx:e2} is now $1+\alpha\in[2,3)$, Lemma \ref{lem:soe} cannot be applied directly. Instead, we first evaluate it using integration by parts that 
\begin{equation*}
  \mL_l[u]_i
     =\frac{h_i^{-\alpha}}{\alpha}(u_i-u_{i-1})
      + \f{1}{\alpha} \int_{-\infty}^{x_{i-1}} \frac{\big[\Pi_hu(y)\big]'}{(x_i-y)^\alpha}dy.
\end{equation*}
Then, by approximating the above convolution kernel $(x_i-y)^{-\alpha}$ with the SOE formula, it yields the numerical approximation 
\begin{equation}\label{Lap_approx:e4}
\begin{aligned}
\mL_{l}[u]_i \approx \frac{h_i^{-\alpha}}{\alpha}(u_i-u_{i-1}) 
     + \frac{1}{\alpha}\sum_{s=1}^{\tilde{N}_{e}}\tilde{\theta}_s\tilde{ \mS}_{i,s}^{L}[u],\quad 
\tilde{\mS}_{i,s}^L[u]:= \int_{-\infty}^{x_{i-1}}e^{-\tilde{\lambda}_s(x_i-y)} \big[\Pi_hu(y)\big]' dy, 
\end{aligned}
\end{equation}
where the parameters $\{\tilde{\lambda}_s, \tilde{\theta}_s, \tilde{N}_{e}\}$ correspond to the case $\beta= \alpha$ in \eqref{equ:soe}, and similarly, the integral $\tilde{\mS}_{i,s}^L[u]$ can also be computed recursively as
\begin{equation}\label{Lap_approx:e4a}
  \begin{aligned}
         \tilde{ \mS}_{1,s}^{L}[u]  = 0,~~
		 \tilde{ \mS}_{i,s}^{L}[u] 
		 = \tilde{\omega}_{i,s}\tilde{ \mS}_{i-1,s}^{L}[u] + \tilde{\mu}_{i,s}^{L} (u_{i-1} - u_{i-2}), \quad 2\leq i\leq N-1,
\end{aligned}
\end{equation}
with
\begin{equation*}
\begin{aligned}
\tilde{\omega}_{i,s}  = e^{-\tilde{\lambda}_sh_i},\quad
\tilde{\mu}_{i,s}^{L}  = \frac{\tilde{\omega}_{i,s}(1-\tilde{\omega}_{i-1,s})}{\tilde{\lambda}_sh_{i-1}}.
\end{aligned}
\end{equation*}

Similarly, by using the same approach as above, the fast approximation for the right-sided nonlocal integral $\mL_{r}[u]_i$ is proposed as follows
\begin{equation}\label{Lap_approx:e5}
    \mL_{r}[u]_i \approx \left\{\begin{aligned}
    &\frac{h_{i+1}^{-\alpha}}{\alpha} u_i - \sum_{s=1}^{N_{e}}\theta_s \mS_{i,s}^{R}[u], &&\quad\alpha\in(0,1),\\
     & \frac{h_{i+1}^{-\alpha}}{\alpha}(u_i-u_{i+1}) 
     + \frac{1}{\alpha}\sum_{s=1}^{\tilde{N}_{e}}\tilde{\theta}_s\tilde{ \mS}_{i,s}^{R}[u], &&\quad\alpha\in[1,2),
    \end{aligned}\right.
\end{equation}
where $\mS_{N-1,s}^{R}[u]=\tilde{ \mS}_{N-1,s}^{R}[u]=0$, and the integrals $\{ \mS_{i,s}^{R}\}$ and  $\{\tilde{\mS}_{i,s}^{R}\}$ can be computed recursively for $i= N-2, \ldots, 1$ as
\begin{equation}\label{Lap_approx:e5a}
   \left\{\begin{aligned}
     \mS_{i,s}^{R}[u] &:= \int^{+\infty}_{x_{i+1}}e^{-\lambda_s(y-x_i)} \Pi_hu(y) dy
        =\omega_{i+1,s} \mS_{i+1,s}^{R}[u] + \mu_{i,s}^{R}u_{i+1} + \nu_{i,s}^{R}u_{i+2}, \\
     \tilde{\mS}_{i,s}^{R}[u] &:= -\int^{+\infty}_{x_{i+1}}e^{-\tilde{\lambda}_s(y-x_i)}  \big[\Pi_hu(y)\big]'  dy
      =\tilde{\omega}_{i+1,s} \tilde{\mS}_{i+1,s}^{R}[u] + \tilde{\mu}_{i,s}^{R} (u_{i+1} - u_{i+2}),
   \end{aligned}\right.
\end{equation}
with coefficients defined by
\begin{equation*}
\left\{
	\begin{aligned}
		\mu_{i,s}^{R} & 
        =\frac{\omega_{i+1,s}(\lambda_sh_{i+2}+\omega_{i+2,s}-1)}{\lambda_s^2h_{i+2}},\\
		\nu_{i,s}^{R} & 
        =-\frac{\omega_{i+1,s}(\lambda_sh_{i+2}\omega_{i+2,s}+\omega_{i+2,s}-1)}{\lambda_s^2h_{i+2}},\\
            \tilde{\mu}_{i,s}^{R} &= \frac{\tilde{\omega}_{i+1,s}(1-\tilde{\omega}_{i+2,s})}{\tilde{\lambda}_sh_{i+2}}. 
	\end{aligned}\right.
\end{equation*}

Next, we discuss the approximation of the local integration term in \eqref{Lap_approx:e2}. Since the local part $\mL_{loc}[u]_i$ contributes negligible memory requirement and computational cost compared to the nonlocal parts $\mL_{l}[u]_i$ and $\mL_{r}[u]_i$, it can be computed directly. However, classical quadrature rules cannot be directly applied due to the strong singularity of the kernel function involved. Here we introduce a linear operator $\mL^*[g](x_i)$ \cite{VL02,ZG16}, which serves as a surrogate operator for the original local integral $\mL_{loc}[u] (x_i)$, defined as
 \begin{equation}\label{Local_approx:e1}
      \mL_{loc}[u]_i= \mL^*[g](x_i) 
                   = \lim_{\delta\rightarrow 0}\Big(\int_{\Omega_\delta}\frac{g(y)}{|x_i-y|^{1+\alpha}}dy
                      +r(\delta)\Big), \quad  g(y)= u(x_i)-\Pi_hu(y),
 \end{equation}
 where $\Omega_\delta=(x_{i-1},x_{i+1})\setminus(x_{i}-\delta,x_i+\delta)$ and
 \begin{equation}\label{Local_approx:e2}
   r(\delta)
     =\left\{\begin{aligned}
        & \frac{\delta^{-\alpha}}{-\alpha} \left(g(x_{i}^{-})+g(x_{i}^{+})\right)=0, && \quad \alpha\in(0,1),\\
       & -\delta^{-1} \left(g(x_{i}^{-})+g(x_{i}^{+})\right)-\ln\delta \left(g'(x_{i}^{-})-g'(x_{i}^{+}) \right)\\
       &\qquad=-\ln\delta \left(\frac{u_{i-1}}{h_i}-\Big(\frac{1}{h_i}+\frac{1}{h_{i+1}}\Big)u_i+\frac{u_{i+1}}{h_{i+1}}\right), && \quad \alpha=1,\\
      & \frac{\delta^{-\alpha}}{-\alpha} \left(g(x_{i}^{-})+g(x_{i}^{+})\right)-\frac{\delta^{1-\alpha}}{1-\alpha} \left(g'(x_{i}^{-})-g'(x_{i}^{+})\right)\\
      &\qquad=-\frac{\delta^{1-\alpha}}{1-\alpha} \left(\frac{u_{i-1}}{h_i}-\Big(\frac{1}{h_i}+\frac{1}{h_{i+1}}\Big)u_i+\frac{u_{i+1}}{h_{i+1}}\right), && \quad \alpha\in(1,2),
 \end{aligned}\right.
 \end{equation}
 where $g(x^{-})$ and $g(x^{+})$ denote the left and right limits of $g$ at $x$ respectively. Furthermore, by performing a series of integral manipulations, we have
 \begin{equation}\label{Local_approx:e3}
  \int_{\Omega_\delta}\frac{g(y)}{|x_i-y|^{1+\alpha}}dy 
        = \left\{  \begin{aligned}
		& \frac{\ln\delta-\ln h_i}{h_i}u_{i-1} +\left(\frac{\ln h_i}{h_i}+\frac{\ln h_{i+1}}{h_{i+1}}-\Big(\frac{1}{h_i}+\frac{1}{h_{i+1}}\Big)\ln\delta\right) u_i\\
             &\quad-\frac{\ln\delta-\ln h_{i+1}}{h_{i+1}}u_{i+1}, \quad \alpha=1,\\
		 & \frac{\delta^{1-\alpha}-h_i^{1-\alpha}}{(1-\alpha)h_i}u_{i-1}
               +\left(\frac{h_i^{-\alpha}+h_{i+1}^{-\alpha}}{1-\alpha}-\frac{\delta^{1-\alpha}}{1-\alpha}\Big(\frac{1}{h_i}+\frac{1}{h_{i+1}}\Big)\right)u_i\\
         &\quad +\frac{\delta^{1-\alpha}-h_{i+1}^{1-\alpha}}{(1-\alpha)h_{i+1}}u_{i+1}, \quad \alpha\in(0,2)\setminus\{1\}.
	\end{aligned}\right.
\end{equation}
 Thus, substituting \eqref{Local_approx:e2}--\eqref{Local_approx:e3} into \eqref{Local_approx:e1}, we obtain 
\begin{equation}\label{Lap_approx:e6}
  \mL_{loc}[u]_i 
        = \left\{ \begin{aligned}
		& -\frac{\ln h_i}{h_i}u_{i-1}+\left(\frac{\ln h_i}{h_i}+\frac{\ln h_{i+1}}{h_{i+1}}\right)u_i-\frac{\ln h_{i+1}}{h_{i+1}}u_{i+1},\quad\alpha=1,\\
		& -\frac{h_i^{-\alpha}}{1-\alpha}u_{i-1}+\frac{h_i^{-\alpha}+h_{i+1}^{-\alpha}}{(1-\alpha)} u_i-\frac{h_{i+1}^{-\alpha}}{1-\alpha}u_{i+1}, \quad \alpha\in(0,2)\setminus\{1\}.
	\end{aligned}\right.
\end{equation}

As a result, by substituting the approximations \eqref{Lap_approx:e3}, \eqref{Lap_approx:e4}, \eqref{Lap_approx:e5} and \eqref{Lap_approx:e6} of the three different parts into \eqref{Lap_approx:e2}, we derive the following fast collocation scheme for the model problem \eqref{model:nonlocal}:
\begin{equation}\label{sch:fast_Collocation}
\mL_h [U]_i=f_i:=f(x_i),\quad i=1,2,\dots,N-1; ~~ U_0=U_N=0,
\end{equation}
where $U_i$ denotes the approximations of $u_i=u(x_i)$ for $i=0,1,\dots,N$,
and the nonlocal operator $\mL_h[v]_i \approx \mL[v]_i$ is represented as follows: 
\begin{equation*} 
    \mL_h[v]_i=C_{\alpha}
     \left\{
       \begin{aligned}
        &-\frac{1}{1-\alpha}\Big(h_i^{-\alpha} v_{i-1}-\frac{h_i^{-\alpha}+h_{i+1}^{-\alpha}}{\alpha}v_i + h_{i+1}^{-\alpha} v_{i+1}\Big)\\
        &\qquad-\sum_{s=1}^{N_{e}}\theta_s \mS_{i,s}^{L}[v] -\sum_{s=1}^{N_{e}}\theta_s \mS_{i,s}^{R}[v],\quad \alpha\in(0,1),\\
        &-\frac{1+\ln h_i}{h_i} v_{i-1}+\Big(\frac{1+\ln h_i}{h_i}+\frac{1+\ln h_{i+1}}{h_{i+1}}\Big) v_i-\frac{1+\ln h_{i+1}}{h_{i+1}} v_{i+1}\\
        &\qquad+\frac{1}{\alpha}\sum_{s=1}^{\tilde{N}_{e}}\tilde{\theta}_s\tilde{ \mS}_{i,s}^{L}[v]
        +\frac{1}{\alpha}\sum_{s=1}^{\tilde{N}_{e}}\tilde{\theta}_s\tilde{ \mS}_{i,s}^{R}[v], \quad \alpha=1,\\
        &-\frac{1}{\alpha(1-\alpha)}\Big(h_i^{-\alpha}v_{i-1}-(h_i^{-\alpha}
          +h_{i+1}^{-\alpha})v_i
          +h_{i+1}^{-\alpha}v_{i+1}\Big)\\ &\qquad+\frac{1}{\alpha} \sum_{s=1}^{\tilde{N}_{e}}\tilde{\theta}_s\tilde{ \mS}_{i,s}^{L}[v]
           +\frac{1}{\alpha} \sum_{s=1}^{\tilde{N}_{e}}\tilde{\theta}_s\tilde{ \mS}_{i,s}^{R}[v],\quad \alpha\in(1,2).
	\end{aligned}
    \right.
\end{equation*}

\begin{remark} We remark that as mentioned above, if the nonlocal parts $\mL_{l}[u]$ and $\mL_{r}[u]$ are computed exactly without using such fast recursive approximations, we can derive a direct collocation scheme for model \eqref{model:nonlocal} as follows:
\begin{equation}\label{sch:direct_Collocation}
\mL [U]_i=f_i,\quad i=1,2,\dots,N-1; ~~ U_0=U_N=0.
\end{equation}
\end{remark}
\subsection{Unique solvability}
In this subsection, we discuss the existence and uniqueness of the solution to the fast collocation method \eqref{sch:fast_Collocation} for $\alpha\in(0,1)$. To this aim, we define $\bm{U}:=(U_1,U_2,\cdots,U_{N-1})^\top$ and $ \bm{f}:=(f_1,f_2,\cdots,f_{N-1})^\top$, and rewrite the developed collocation scheme \eqref{sch:fast_Collocation} into the following matrix form
\begin{equation}\label{sch:fast_Collocation:matrix}
	\bm{AU}=\bm{f},
\end{equation}
where the coefficient matrix $\bm{A}=({a}_{i, j})\in\mathbb{R}^{(N-1)\times(N-1)}$ for $\alpha\in (0,1)$ is defined by
\begin{equation}\label{matrix-form:a}
	a_{i, j}=\left\{
	\begin{aligned}
		& - \int_{x_{j-1}}^{x_{j+1}}\phi_j(y)\sum_{s=1}^{N_{e}}\theta_se^{-\lambda_s(y-x_i)}dy, & \quad j\geq i+2,\\
		& - \frac{h_{i+1}^{-\alpha}}{1-\alpha} -\int_{x_{i+1}}^{x_{i+2}} \phi_{i+1}(y)\sum_{s=1}^{N_{e}}\theta_se^{-\lambda_s(y-x_i)}dy, & \quad j= i+1,\\
		&\frac{h_i^{-\alpha}+h_{i+1}^{-\alpha}}{\alpha(1-\alpha)},& \quad j=i,\\
		& -\frac{h_i^{-\alpha}}{1-\alpha}
                    -\int_{x_{i-2}}^{x_{i-1}} \phi_{i-1}(y)\sum_{s=1}^{N_{e}}\theta_se^{-\lambda_s(x_i-y)}dy, & \quad j=i-1,\\
		& - \int_{x_{j-1}}^{x_{j+1}} \phi_j(y)\sum_{s=1}^{N_{e}}\theta_se^{-\lambda_s(x_i-y)}dy, & \quad j\leq i-2.
	\end{aligned}\right.
\end{equation}

\begin{remark} Corresponding to the direct collocation scheme \eqref{sch:direct_Collocation}, we can also get a matrix form
\begin{equation*} 
	\bm{A}^d \bm{U}=\bm{f},
\end{equation*}
where the coefficient matrix $\bm{A}^d=({a}^d_{i, j})\in\mathbb{R}^{(N-1)\times(N-1)}$ 
is defined by 
\begin{equation}\label{matrix-form:b}
	a_{i, j}^d=\left\{
	\begin{aligned}
		&\frac{h_i^{-\alpha}+h_{i+1}^{-\alpha}}{\alpha(1-\alpha)},\quad j= i,\\
        &\frac{|x_{j-1}-x_i|^{1-\alpha}-|x_{j}-x_i|^{1-\alpha}}{\alpha(1-\alpha)h_j}
          +\frac{|x_{j+1}-x_i|^{1-\alpha}-|x_{j}-x_i|^{1-\alpha}}{\alpha(1-\alpha)h_{j+1}},\quad j\neq i,
	\end{aligned}\right.
\end{equation}
In \cite{CD24}, it has been proven that on graded meshes the coefficient matrix $\bm{A}^d$ is strictly diagonally dominant, with positive entries on the main diagonal and negative off-diagonal entries. 
\end{remark}

In the following, we discuss the properties of the coefficient matrix \eqref{matrix-form:a} on general nonuniform grids, and then show the unique solvability of the collocation method.
\begin{theorem}\label{thm:dia-dom-mat}
For $\alpha\in(0,1)$, the fast collocation method \eqref{sch:fast_Collocation} is uniquely solvable on general nonuniform grids, if $\epsilon$ is sufficiently accurate.
\end{theorem}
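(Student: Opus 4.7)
The plan is to prove unique solvability by establishing strict diagonal dominance of $\bm{A}$, viewing it as a small perturbation of the matrix $\bm{A}^d$ from \eqref{matrix-form:b} associated with the direct collocation scheme \eqref{sch:direct_Collocation}. The key observation is that $\bm{A}$ and $\bm{A}^d$ share identical diagonal entries $a_{ii}=a^d_{ii}=(h_i^{-\alpha}+h_{i+1}^{-\alpha})/[\alpha(1-\alpha)]$, because the local integral $\mL_{loc}$ is computed exactly by both schemes; they differ only in the off-diagonal entries through the SOE quadrature error applied to the nonlocal parts.

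First I would establish that $\bm{A}^d$ is itself strictly diagonally dominant on general nonuniform grids, strengthening the graded-mesh result of \cite{CD24}. For $\alpha\in(0,1)$ the map $y\mapsto|y-x_i|^{1-\alpha}$ is concave on each side of $x_i$, from which a direct inspection of \eqref{matrix-form:b} yields $a^d_{ij}<0$ for all $j\neq i$ (the entries $a^d_{i,i\pm1}$ require combining the singular term $-h_i^{-\alpha}/(1-\alpha)$ with the concavity inequality for $y^{1-\alpha}$). Consequently the diagonal-dominance margin equals the row sum of $\bm{A}^d$. I would compute this row sum by applying $\bm{A}^d$ to the test vector $\bm{e}=(1,\ldots,1)^\top$, which corresponds to evaluating the direct operator on $\Pi_h\tilde{1}(y):=\sum_{j=1}^{N-1}\phi_j(y)$. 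Since $1-\Pi_h\tilde{1}(y)=\phi_0(y)+\phi_N(y)\geq 0$ on $[a,b]$ and equals $1$ outside, the ``boundary gap'' supplies the bound
\[
a^d_{ii}-\sum_{j\neq i}|a^d_{ij}|\;\geq\;\frac{C_\alpha}{\alpha}\bigl[(x_i-a)^{-\alpha}+(b-x_i)^{-\alpha}\bigr]\;\geq\;D_0:=\frac{2^{1+\alpha}C_\alpha}{\alpha(b-a)^{\alpha}},
\]
a mesh-independent positive constant.

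Next I would control the SOE perturbation $\bm{E}:=\bm{A}-\bm{A}^d$. By comparing \eqref{matrix-form:a} with \eqref{matrix-form:b}, each off-diagonal entry of $\bm{E}$ is precisely the SOE quadrature error of the nonlocal kernel tested against (part of) $\phi_j$, with the integration variable $y$ satisfying $|y-x_i|\geq \min\{h_i,h_{i+1}\}$. Choosing the SOE cutoff $\Delta x\leq\min_j h_j$ and $X\geq b-a$, Lemma \ref{lem:soe} gives $|E_{ij}|\leq\epsilon\int\phi_j(y)\,dy$, so
\[
\sum_{j\neq i}|E_{ij}|\;\leq\;\epsilon\sum_{j=1}^{N-1}\int_{x_{j-1}}^{x_{j+1}}\phi_j(y)\,dy\;\leq\;\epsilon(b-a).
\]
Combining the two steps, and using $a_{ii}=a^d_{ii}$, the triangle inequality yields
\[
|a_{ii}|-\sum_{j\neq i}|a_{ij}|\;\geq\;\Bigl(a^d_{ii}-\sum_{j\neq i}|a^d_{ij}|\Bigr)-\sum_{j\neq i}|E_{ij}|\;\geq\;D_0-\epsilon(b-a),
\]
so choosing any $\epsilon<D_0/(b-a)$ renders $\bm{A}$ strictly diagonally dominant and hence invertible, which is the desired unique solvability.

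The main technical obstacle I anticipate is securing a \emph{mesh-independent} positive margin $D_0$ for $\bm{A}^d$ on arbitrary nonuniform grids; any hidden $h$-dependence would force a coupled choice of $\epsilon$ and $N$, defeating the point of Lemma \ref{lem:soe}. This is resolved by exploiting the Dirichlet condition through the far-field integral over $\mathbb{R}\setminus[a,b]$, which provides a geometric, mesh-free lower bound on the row sum. A second delicate point is that the SOE error bound requires $|y-x_i|\geq\Delta x$ uniformly, forcing a mild restriction $\Delta x\leq\min_j h_j$; while this may slightly increase $N_e$ through the logarithmic factor in Lemma \ref{lem:soe}, it does not affect the qualitative conclusion.
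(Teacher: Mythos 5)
Your proposal is correct and follows essentially the same route as the paper: both treat $\bm{A}$ as an $O(\epsilon)$ perturbation of $\bm{A}^d$, establish that $\bm{A}^d$ is strictly diagonally dominant with negative off-diagonal entries, bound the row sum from below by the boundary-gap quantity $\tfrac{1}{\alpha}\bigl[(x_i-a)^{-\alpha}+(b-x_i)^{-\alpha}\bigr]$ (the paper obtains this by telescoping the entries into $\Xi_i$, you by integrating $1-\Pi_h\tilde 1$ over the far field—these coincide, cf.\ Remark \ref{rem:condition}), and bound the perturbation row sum by $(b-a)\epsilon$. Your triangle-inequality bookkeeping is marginally cleaner in that it yields invertibility under a single condition on $\epsilon$ without needing the off-diagonal entries of $\bm{A}$ itself to stay nonpositive, whereas the paper imposes that extra sign condition because it is reused later in the maximum-norm error analysis.
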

\begin{proof} 
%
First, it follows from \eqref{matrix-form:a} and  \eqref{matrix-form:b} that 
\begin{equation}\label{matrix-form:a1}
		a_{i,i}=a_{i,i}^d=\frac{h_i^{-\alpha}+h_{i+1}^{-\alpha}}{\alpha(1-\alpha)}>0, \quad 1\leq i\leq N-1.
\end{equation}

Next, we combine \eqref{equ:soe} and \eqref{matrix-form:a} to conclude that 
	\begin{equation*}
		-\int_{x_{j-1}}^{x_{j+1}}\phi_j(y)\big(|x_i-y|^{-(1+\alpha)}+\epsilon\big)dy
             \leq a_{i, j}
             \leq 
             -\int_{x_{j-1}}^{x_{j+1}}\phi_j(y)\big(|x_i-y|^{-(1+\alpha)}-\epsilon\big)dy,
	\end{equation*}
    for $|i-j|\geq 2$,
	i.e.,
	\begin{equation}\label{matrix-form:a2}
		a_{i, j}^d-\frac{h_j+h_{j+1}}{2}\epsilon\leq a_{i, j}\leq a_{i, j}^d+\frac{h_j+h_{j+1}}{2}\epsilon.
	\end{equation}
Besides, for $j=i+1$, we have
\begin{equation*}
 \begin{aligned}
	& -\frac{h_{i+1}^{-\alpha}}{1-\alpha}
            -\int_{x_{i+1}}^{x_{i+2}}\phi_{i+1}(y)\big((y-x_i)^{-(1+\alpha)}+\epsilon\big)dy\\
		& \qquad	\leq a_{i, i+1}\leq
             -\frac{h_{i+1}^{-\alpha}}{1-\alpha}
             -\int_{x_{i+1}}^{x_{i+2}}\phi_{i+1}(y)\big((y-x_i)^{-(1+\alpha)} -\epsilon\big)dy,
		\end{aligned}	 
\end{equation*}
	i.e.,
	\begin{equation}\label{matrix-form:a3}
		a_{i,i+1}^d-\frac{h_{i+2}}{2}\epsilon\leq a_{i, i+1}\leq a_{i,i+1}^d+\frac{h_{i+2}}{2}\epsilon,
	\end{equation}
and for $j=i-1$, we have
	\begin{equation*}
		\begin{aligned}
			& -\frac{h_i^{-\alpha}}{1-\alpha} -\int_{x_{i-2}}^{x_{i-1}}\phi_{i-1}(y)\big((x_i-y)^{-(1+\alpha)}+\epsilon\big)dy\\
			& \qquad \leq a_{i, i-1}\leq-\frac{h_i^{-\alpha}}{1-\alpha} -\int_{x_{i-2}}^{x_{i-1}}\phi_{i-1}(y)\big((x_i-y)^{-(1+\alpha)}-\epsilon\big)dy,
		\end{aligned}
	\end{equation*}
	i.e.,
	\begin{equation}\label{matrix-form:a4}
		a_{i,i-1}^d-\frac{h_{i-1}}{2}\epsilon\leq a_{i, i-1}\leq a_{i,i-1}^d+\frac{h_{i-1}}{2}\epsilon.
	\end{equation}
    
	Note that for any $i\ne j$, it holds that
\begin{equation*} 
		a_{i,j}^d
        :=-\int_{x_{j-1}}^{x_{j+1}}\phi_j(y) |x_i-y|^{-(1+\alpha)} dy,      
 \end{equation*}
which implies that $a_{i, j}^d<0$ for any $i\ne j$, since the integrand is positive over the integral interval. Thus, to ensure $a_{i, j}\leq 0$ also holds for any $i\ne j$, by \eqref{matrix-form:a2}--\eqref{matrix-form:a4} it suffices to require
    $a_{i, j}^d+\frac{h_j+h_{j+1}}{2}\epsilon \le 0$,
	which leads to
		$\epsilon\leq \frac{-2 a_{i, j}^d}{h_j+h_{j+1}}$.
Furthermore, we conclude that
\begin{equation}\label{matrix-form:a5}
	\begin{aligned}
			a_{i,i}-\sum_{j=1,j\neq i}^{N-1}|a_{i, j}|&=\sum_{j=1}^{N-1}a_{i, j}\geq\sum_{j=1}^{N-1} a_{i, j}^d-(b-a)\epsilon
			=\Xi_i -(b-a)\epsilon \geq 0, 
	\end{aligned}
\end{equation}
for $\epsilon\leq \frac{\Xi_i}{b-a}$, 
    where $\Xi_i:=\frac{1}{\alpha(1-\alpha)}\Big[\frac{(x_i-a)^{1-\alpha}-(x_i-x_1)^{1-\alpha}}{h_1}+\frac{(b-x_i)^{1-\alpha}-(x_{N-1}-x_i)^{1-\alpha}}{h_{N}}\Big] $. 

Therefore, equations \eqref{matrix-form:a1}--\eqref{matrix-form:a5} imply that $\bm{A}$ is a strictly diagonally dominant matrix with positive diagonal entries and negative off-diagonal entries if 
\begin{equation}\label{solvability:condition}
\epsilon \le \min \left\{\frac{-2 a_{i,j}^d}{h_j+h_{j+1}},\frac{\Xi_i}{b-a} \right\}.
\end{equation}
Therefore, if $\epsilon$ is sufficiently small, the coefficient matrix $\bm{A}$ is invertible, and thus, the fast collocation method \eqref{sch:fast_Collocation} is uniquely solvable.
\end{proof}

\begin{remark}\label{rem:condition}
 The following calculation confirms the reliability of condition \eqref{solvability:condition} for the example considered in Section \ref{sec:num} with $(a,b)=(0,2)$. First, a simple calculation shows that
\begin{equation*}
 		\frac{\Xi_i}{b-a}\geq\frac{1}{\alpha(b-a)}\big[(x_i-a)^{-\alpha}+(b-x_i)^{-\alpha}\big]>\frac{2}{\alpha(b-a)^{1+\alpha}}>\frac{1}{2},
 	\end{equation*}
which ensures that the lower bound is uniformly larger than $1/2$. Next, if uniform grids with grid size $h:=(b-a)/N$ are considered, we have
\begin{equation*}
\begin{aligned}
\frac{-2 a_{i,j}^d}{h_j+h_{j+1}} & = \frac{1}{\alpha(1-\alpha)h^{1+\alpha}} \big[2(j-i)^{1-\alpha}-(j-i-1)^{1-\alpha}-(j-i+1)^{1-\alpha}\big]\\
&\geq \frac{1}{\alpha(1-\alpha)h^{1+\alpha}} \big[2(N-2)^{1-\alpha}-(N-3)^{1-\alpha}-(N-1)^{1-\alpha}\big]
 =:g_{\alpha}(N),
\end{aligned}
\end{equation*}
where the inequality follows from the monotonicity of the function. Figure \ref{fig:eps1} illustrates the values of $g_{\alpha}(N)$ with respect to $\alpha\in(0.001,0.999)$ for fixed $N=2^{13}$ (the finest grid used in Section \ref{sec:num}). As shown, the minimum value is about $0.25$. While, for the special symmetric graded grids \eqref{grad_mesh:e1} employed in this paper, see $\alpha=0.8$, $\kappa=2(2-\alpha)/\alpha$ and $N=2^9$, the minimum value is around $0.15$. Therefore, it suffices to choose $\epsilon$ smaller than these thresholds to ensure the unique solvability of \eqref{sch:fast_Collocation}. In practice, for example, in Section \ref{sec:num} the parameter $\epsilon$ is set to $10^{-8}$ to maintain full accuracy, which is far less than the required condition \eqref{solvability:condition}.
\begin{figure}[htbp!]\small
	\centering
	\includegraphics[width=0.5\textwidth]{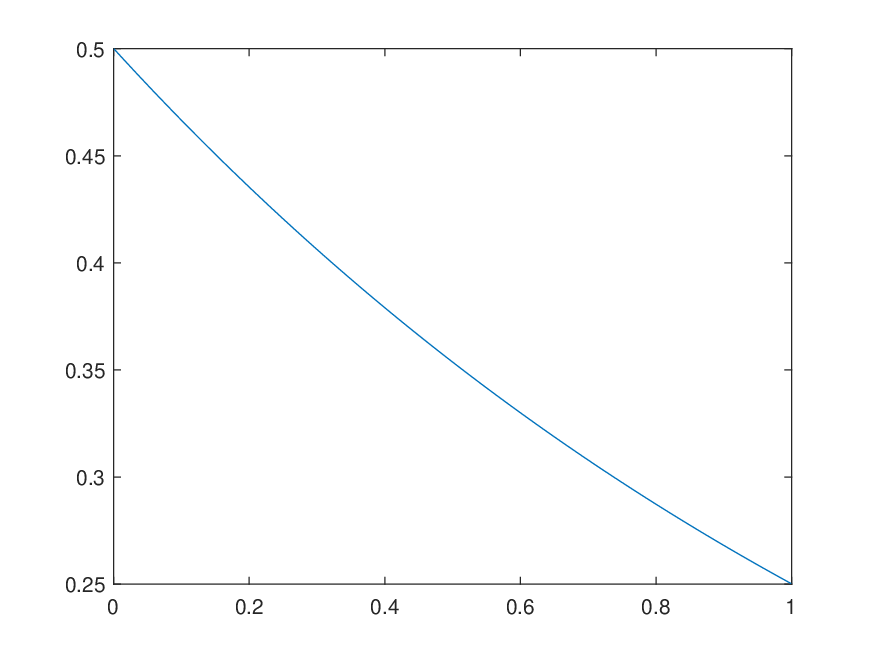}
    \caption{Values of $g_{\alpha}(N)$ with respect to $\alpha\in(0.001,0.999)$}
	\label{fig:eps1}
\end{figure}
\end{remark}

\begin{remark}
It is regrettable that the unique solvability result for the proposed fast collocation scheme with $\alpha\in[1,2)$ is not established in Theorem \ref{thm:dia-dom-mat}. The reason is that in such a case the signs of the matrix entries change and the resulting stiffness matrix no longer satisfies the diagonal dominance property. To address this issue, below we introduce a modified collocation scheme, and show that this modified scheme is uniquely solvable on uniform grids, provided that $\epsilon$ is sufficiently small.
\end{remark}

\subsection{A modified scheme}
In this subsection, we propose a modified fast collocation scheme by presenting a new local approximation for $\mL_{loc}[u]_i$ in \eqref{Lap_approx:e1}. A similar idea was considered by Zhang et al. \cite{ZG16} for hypersingular integral equations. Specifically, we consider the following approximation
\begin{equation}\label{equ:mod_loc1}
\begin{aligned}
 \int_{x_{i-1}}^{x_{i+1}} \frac{u(x_i)-u(y)}{|x_i-y|^{1+\alpha}} dy  &\approx \int_{x_{i-1}}^{x_{i+1}} \frac{u(x_i)-\left[u(x_i)+u'(x_i)(y-x_i)\right]}{|x_i-y|^{1+\alpha}}dy \\
 &=  u'(x_i) \int _{x_{i-1}}^{x_{i+1}} \frac{x_i-y}{|x_i-y|^{1+\alpha}} dy,
\end{aligned}
\end{equation}
and noting a well-known second-order approximation of $u'(x_i)$ on general grids can be given by 
\begin{equation}\label{equ:u_pri}
	u'(x_i)\approx-\frac{h_{i+1}}{h_i(h_i+h_{i+1})}u_{i-1}+\frac{h_{i+1}-h_i}{h_ih_{i+1}}u_i+\frac{h_i}{h_{i+1}(h_i+h_{i+1})}u_{i+1}.
\end{equation}
Furthermore, by taking $g(y) = x_i-y$ in \eqref{Local_approx:e1}--\eqref{Local_approx:e2} and performing a series of integral manipulations, we obtain
\begin{equation}\label{equ:mod1}
\int_{x_{i-1}}^{x_{i+1}} \frac{x_i-y}{|x_i-y|^{1+\alpha}} dy = \lim_{\delta\rightarrow 0}\Big(\int _{\Omega_{\delta}} \frac{x_i-y}{|x_i-y|^{1+\alpha}} dy + r(\delta)\Big) := \eta_i^\alpha, 
\end{equation}
where
\begin{equation*}
\eta_i^\alpha = \left\{
\begin{aligned}
& \ln h_i -\ln h_{i+1}, &&\quad \alpha = 1,\\
& \frac{h_i^{1-\alpha}-h_{i+1}^{1-\alpha}}{1-\alpha}, &&\quad \alpha \in (0,2)\setminus \{1\}.
\end{aligned}
\right.
\end{equation*}
Therefore, by substituting \eqref{equ:u_pri}--\eqref{equ:mod1} into \eqref{equ:mod_loc1}, we obtain a new local numerical approximation that replaces the local integral in \eqref{Lap_approx:e6}:
\begin{equation}\label{equ:mod_loc3}
	\mL_{loc}^{(m)}[u]_i:= 
     \Big[-\frac{h_{i+1} }{h_i (h_i+h_{i+1})} u_{i-1} + \frac{(h_{i+1}-h_i)}{h_i h_{i+1}} u_{i} + \frac{h_{i}}{h_{i+1} (h_i+h_{i+1})} u_{i+1} \Big] \eta_i^\alpha.
\end{equation}

As a result, combining \eqref{Lap_approx:e3}, \eqref{Lap_approx:e4}, \eqref{Lap_approx:e5} and \eqref{equ:mod_loc3}, we obtain the following modified fast collocation scheme 
\begin{equation}\label{sch:fast_collocation_md}
\mL^{(m)}_h [U]_i=f_i,\quad i=1,2,\dots,N-1; ~~ U_0=U_N=0,
\end{equation}
where
\begin{equation*}
    \mL^{(m)}_h[v]_i:= C_{\alpha}\left\{\begin{aligned}
    & \frac{-h_{i+1} \eta_i^\alpha }{h_i(h_i+h_{i+1})} v_{i-1}
     + \Big(\frac{h_i^{-\alpha}+h_{i+1}^{-\alpha}}{\alpha} + \frac{(h_{i+1}-h_i)\eta_i^\alpha}{h_ih_{i+1}} \Big) v_i \\
	&\qquad + \frac{h_i\eta_i^\alpha}{h_{i+1}(h_i+h_{i+1})}  v_{i+1}
     - \sum_{s=1}^{N_e} \theta_s \mS_{i,s}^{L} [v]
     - \sum_{s=1}^{N_e} \theta_s \mS_{i,s}^{R} [v],\quad \alpha \in (0,1),\\
        &\frac{-h_i - h_{i+1} - h_{i+1}\eta_i^\alpha}{h_i(h_i+h_{i+1})} v_{i-1}
         +\frac{h_i+h_{i+1}+(h_{i+1}-h_i)\eta_i^\alpha}{h_ih_{i+1}} v_i\\
         &\qquad  + \frac{-h_i-h_{i+1}+h_i\eta_i^\alpha}{(h_i+h_{i+1})h_{i+1}} v_{i+1}
         +\frac{1}{\alpha} \sum_{s=1}^{\tilde{N}_{e}}\tilde{\theta}_s\tilde{ \mS}_{i,s}^{L}[v]
          +\frac{1}{\alpha} \sum_{s=1}^{\tilde{N}_{e}}\tilde{\theta}_s\tilde{ \mS}_{i,s}^{R}[v], \quad \alpha=1,\\
        &\Big( -\frac{h_i^{-\alpha}}{\alpha(1-\alpha)} -\frac{h_{i+1}\eta_i^\alpha}{h_i(h_i+h_{i+1})} \Big) v_{i-1}
         +\Big( \frac{h_i^{-\alpha}+h_{i+1}^{-\alpha}}{\alpha} + \frac{(h_{i+1}-h_i)\eta_i^\alpha}{h_ih_{i+1}} \Big) v_i\\
	&\qquad+\Big( -\frac{h_{i+1}^{-\alpha}}{(1-\alpha)} + \frac{h_i \eta_i^\alpha}{(h_i+h_{i+1})h_{i+1}}\Big) v_{i+1} \\
     &\qquad    +\frac{1}{\alpha} \sum_{s=1}^{\tilde{N}_{e}}\tilde{\theta}_s\tilde{ \mS}_{i,s}^{L}[v]
         +\frac{1}{\alpha} \sum_{s=1}^{\tilde{N}_{e}}\tilde{\theta}_s\tilde{ \mS}_{i,s}^{R}[v],\quad \alpha\in(1,2).
	\end{aligned}
    \right.
\end{equation*}
Similarly, the modified collocation scheme \eqref{sch:fast_collocation_md} can be expressed in matrix form
\begin{equation}\label{equ:matrix-form-mod}
\bm{A}^{(m)}\bm{U}=\bm{f},
\end{equation}
where the coefficient matrix $\bm{A}^{(m)}=({a}^{(m)}_{i, j})\in\mathbb{R}^{(N-1)\times(N-1)}$ is defined by
\begin{equation}\label{matrix-form:c1}
	a_{i, j}^{(m)} = \left\{
	\begin{aligned}
		& - \int_{x_{j-1}}^{x_{j+1}}\phi_j(y)\sum_{s=1}^{N_{e}}\theta_se^{-\lambda_s(y-x_i)}dy, &\quad  j\geq i+2,\\
		& \frac{h_i \eta_i^\alpha}{h_{i+1}(h_i+h_{i+1})}  -\int_{x_{i+1}}^{x_{i+2}} \phi_{i+1}(y)\sum_{s=1}^{N_{e}}\theta_se^{-\lambda_s(y-x_i)}dy, & \quad j= i+1,\\
		& \frac{h_i^{-\alpha}+h_{i+1}^{-\alpha}}{\alpha} + \frac{(h_{i+1}-h_i)\eta_i^\alpha}{h_ih_{i+1}}, & \quad j=i,\\
		& \frac{-h_{i+1} \eta_i^\alpha }{h_i(h_i+h_{i+1})} 
            - \int_{x_{i-2}}^{x_{i-1}} \phi_{i-1}(y)\sum_{s=1}^{N_{e}}\theta_se^{-\lambda_s(x_i-y)}dy, & \quad j=i-1,\\
		& - \int_{x_{j-1}}^{x_{j+1}} \phi_j(y)\sum_{s=1}^{N_{e}}\theta_se^{-\lambda_s(x_i-y)}dy, & \quad j\leq i-2,
	\end{aligned}\right.
\end{equation}
for $\alpha\in(0,1)$, or by
\begin{equation}\label{matrix-form:c2}
	a_{i, j}^{(m)}=\left\{
	\begin{aligned}
		& -\int_{x_{j-1}}^{x_{j+1}}\phi_j'(y)\sum_{s=1}^{\tilde{N}_{e}}\tilde{\theta}_se^{-\tilde{\lambda}_s(y-x_i)}dy, & \quad j\geq i+2,\\
		& \frac{-h_i-h_{i+1}+h_i\eta_i^\alpha}{(h_i+h_{i+1})h_{i+1}} - \int_{x_{i+1}}^{x_{i+2}} \phi_{i+1}'(y) \sum_{s=1}^{\tilde{N}_{e}} \tilde{\theta}_se^{-\tilde{\lambda}_s(y-x_i)}dy, & \quad j= i+1,\\
		& \frac{h_i+h_{i+1}+(h_{i+1}-h_i)\eta_i^\alpha}{h_ih_{i+1}}, & \quad j=i,\\
		& \frac{-h_i - h_{i+1} - h_{i+1}\eta_i^\alpha}{h_i(h_i+h_{i+1})}
        + \int_{x_{i-2}}^{x_{i-1}} \phi_{i-1}'(y) \sum_{s=1}^{\tilde{N}_{e}} \tilde{\theta}_se^{-\tilde{\lambda}_s(x_i-y)}dy, & \quad j=i-1,\\
		&  \int_{x_{j-1}}^{x_{j+1}} \phi_j'(y)\sum_{s=1}^{\tilde{N}_{e}}\tilde{\theta}_se^{-\tilde{\lambda}_s(x_i-y)}dy, & \quad j\leq i-2,
	\end{aligned}\right.
\end{equation}
for $\alpha=1$, or by
\begin{equation}\label{matrix-form:c3}
	a_{i, j}^{(m)}=\left\{
	\begin{aligned}
		& -\frac{1}{\alpha} \int_{x_{j-1}}^{x_{j+1}}\phi_j'(y)\sum_{s=1}^{\tilde{N}_{e}}\tilde{\theta}_se^{-\tilde{\lambda}_s(y-x_i)}dy, & \quad j\geq i+2,\\
		& -\frac{h_{i+1}^{-\alpha}}{(1-\alpha)} + \frac{h_i \eta_i^\alpha}{(h_i+h_{i+1})h_{i+1}} - \frac{1}{\alpha} \int_{x_{i+1}}^{x_{i+2}} \phi_{i+1}'(y) \sum_{s=1}^{\tilde{N}_{e}} \tilde{\theta}_se^{-\tilde{\lambda}_s(y-x_i)}dy, & \quad j= i+1,\\
		&  \frac{h_i^{-\alpha}+h_{i+1}^{-\alpha}}{\alpha} + \frac{(h_{i+1}-h_i)\eta_i^\alpha}{h_ih_{i+1}}, & \quad j=i,\\
		& -\frac{h_i^{-\alpha}}{\alpha(1-\alpha)} -\frac{h_{i+1}\eta_i^\alpha}{h_i(h_i+h_{i+1})}
        + \frac{1}{\alpha} \int_{x_{i-2}}^{x_{i-1}} \phi_{i-1}'(y) \sum_{s=1}^{\tilde{N}_{e}} \tilde{\theta}_se^{-\tilde{\lambda}_s(x_i-y)}dy, & \quad j=i-1,\\
		& \frac{1}{\alpha} \int_{x_{j-1}}^{x_{j+1}} \phi_j'(y)\sum_{s=1}^{\tilde{N}_{e}}\tilde{\theta}_se^{-\tilde{\lambda}_s(x_i-y)}dy, & \quad j\leq i-2,
	\end{aligned}\right.
\end{equation}
for $\alpha\in(1,2)$.

\begin{lemma}\label{lem:mod-sch-sign}
For $\alpha \in (0,2) \setminus \{1\}$ and $i \geq 2$, the following inequalities hold
\begin{equation*}
\left\{\begin{aligned}
& \ln 2 - 1 < 0, \quad \ln \frac{i^2-1}{i^2} < 0 ,\\
& \frac{2^{1-\alpha} + \alpha - 2}{1-\alpha} < 0,\\
& m_i: = \frac{(i+1)^{1-\alpha} - 2i^{1-\alpha} + (i-1)^{1-\alpha}}{\alpha(1-\alpha)} < 0.
\end{aligned}\right.
\end{equation*}
\end{lemma}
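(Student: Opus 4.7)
The four inequalities split naturally by difficulty. For the first pair I would just observe that $2<e$ gives $\ln 2<1$, while $i\ge 2$ forces $0<(i^2-1)/i^2<1$, so its logarithm is negative; nothing more is needed.

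For the second inequality I would introduce $h(\alpha):=2^{1-\alpha}+\alpha-2$ on $[0,2]$ and study it as a function of $\alpha$. Direct substitution gives $h(0)=h(1)=0$, while
\begin{equation*}
h''(\alpha)=(\ln 2)^{2}\,2^{1-\alpha}>0
\end{equation*}
shows that $h$ is strictly convex on $[0,2]$. A strictly convex function can vanish in at most two points, and when it vanishes at exactly two points it is strictly negative between them and strictly positive outside. Therefore $h(\alpha)<0$ for $\alpha\in(0,1)$ and $h(\alpha)>0$ for $\alpha\in(1,2)$. Combining this with the sign of $1-\alpha$ on each subinterval, the quotient $h(\alpha)/(1-\alpha)$ is negative throughout $(0,2)\setminus\{1\}$, which is exactly the desired inequality.

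For the third inequality the numerator $(i+1)^{1-\alpha}-2\,i^{1-\alpha}+(i-1)^{1-\alpha}$ is nothing but the centred second difference of $f(x)=x^{1-\alpha}$ at the point $x=i$ with unit spacing. Since $f$ is $C^{2}$ on $[i-1,i+1]$ for every $i\ge 2$, the mean-value form of the second divided difference (equivalently, two applications of Taylor's theorem) yields a point $\xi_{i}\in(i-1,i+1)$ with
\begin{equation*}
(i+1)^{1-\alpha}-2\,i^{1-\alpha}+(i-1)^{1-\alpha}=f''(\xi_{i})=-\alpha(1-\alpha)\,\xi_{i}^{-1-\alpha}.
\end{equation*}
Dividing by $\alpha(1-\alpha)$, which is nonzero on $(0,2)\setminus\{1\}$, gives $m_{i}=-\xi_{i}^{-1-\alpha}<0$, regardless of whether $\alpha$ lies in $(0,1)$ or $(1,2)$.

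The only point that requires any care is purely bookkeeping: tracking the sign of $1-\alpha$ in the two subranges when handling $h(\alpha)/(1-\alpha)$, and observing that the factor $\alpha(1-\alpha)$ in both the numerator and denominator of $m_{i}$ cancels cleanly so that no case distinction on $\alpha$ is actually needed for part three. Since the underlying monotonicity/convexity properties of $x^{1-\alpha}$ and $2^{1-\alpha}+\alpha-2$ are elementary, I expect no essential obstacle beyond writing these verifications down.
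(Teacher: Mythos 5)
Your proof is correct and follows exactly the route the paper gestures at: the paper omits the details, stating only that the lemma follows from ``basic mathematical techniques, such as monotonicity and convexity,'' and your argument supplies precisely those details (strict convexity of $2^{1-\alpha}+\alpha-2$ with roots at $0$ and $1$, and the mean-value form of the second difference of $x^{1-\alpha}$, which cleanly cancels the $\alpha(1-\alpha)$ factor). All sign bookkeeping in both subranges of $\alpha$ checks out, so nothing further is needed.
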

\begin{proof}
The lemma can be easily proved by some basic mathematical techniques, such as monotonicity and convexity. The details are omitted for the sake of brevity.
\end{proof}

Although the modified scheme is applicable to general spatial grids, below we restrict our numerical analysis to uniform grids.
\begin{theorem}\label{thm:dia-dom-mat-m}
 The modified fast collocation scheme \eqref{sch:fast_collocation_md} is uniquely solvable on uniform grids for any $\alpha\in (0,2)$, provided that $\epsilon$ is sufficiently accurate.
\end{theorem}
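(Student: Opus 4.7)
The plan is to mirror the proof of Theorem \ref{thm:dia-dom-mat}: establish strict diagonal dominance of the coefficient matrix $\bm{A}^{(m)}$ (with positive diagonal entries), from which invertibility and unique solvability of \eqref{sch:fast_collocation_md} follow immediately. To achieve this I first introduce the SOE-free counterpart $\bm{A}^{(m),d}$ obtained by replacing every SOE sum in \eqref{matrix-form:c1}--\eqref{matrix-form:c3} by the exact kernel ($|y-x_i|^{-(1+\alpha)}$ in the $\alpha\in(0,1)$ case, or $|y-x_i|^{-\alpha}$ after integration by parts for $\alpha\in[1,2)$), prove strict diagonal dominance for $\bm{A}^{(m),d}$ on uniform grids, and then absorb the SOE error as a small entrywise perturbation that preserves this dominance once $\epsilon$ is small enough.

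On uniform grids with $h_i=h$, one has $\eta_i^\alpha=0$ in every regime of $\alpha$ (both the logarithmic form at $\alpha=1$ and the algebraic form $(h_i^{1-\alpha}-h_{i+1}^{1-\alpha})/(1-\alpha)$ vanish identically). This eliminates all local-modification contributions in \eqref{matrix-form:c1}--\eqref{matrix-form:c3}, and the remaining entries can be written in closed form. For $|i-j|\geq 2$, a direct evaluation of the tent integral (or its derivative analogue after integration by parts for $\alpha\in[1,2)$) collapses to $a_{i,j}^{(m),d}=h^{-\alpha}\,m_{|i-j|}$, which is strictly negative by the last bullet of Lemma \ref{lem:mod-sch-sign}; for the nearest neighbours $j=i\pm 1$, the restricted tent integral yields an expression proportional to $(2^{1-\alpha}+\alpha-2)/(1-\alpha)$ (or to $\ln 2-1$ at $\alpha=1$), which is again negative by the second (respectively first) bullet of Lemma \ref{lem:mod-sch-sign}; and the diagonal reduces to $a_{i,i}^{(m),d}=2h^{-\alpha}/\alpha>0$ (or $2/h$ at $\alpha=1$). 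Since the shifted tent functions sum to $1$ on the interior of $(a,b)$ and to linear ramps on the two end sub-intervals, the row sum $\sum_{j}a_{i,j}^{(m),d}$ telescopes into a strictly positive boundary quantity $\Xi_i^{(m)}$ completely analogous to the $\Xi_i$ appearing in \eqref{matrix-form:a5}. Together these observations yield strict diagonal dominance of $\bm{A}^{(m),d}$.

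Finally, mirroring the perturbation estimates \eqref{matrix-form:a2}--\eqref{matrix-form:a4}, the SOE bound \eqref{equ:soe} gives the entrywise estimate $|a_{i,j}^{(m)}-a_{i,j}^{(m),d}|\leq C\epsilon\,(h_j+h_{j+1})$ (with an additional $1/\alpha$ factor in the integration-by-parts form used for $\alpha\in[1,2)$, where the SOE is applied to $|y-x_i|^{-\alpha}$ weighted by $\phi_j'$). Summing over $j$ therefore perturbs each row sum by at most $C(b-a)\epsilon$, so choosing $\epsilon$ smaller than a threshold proportional to $\min_i\Xi_i^{(m)}/(b-a)$ simultaneously preserves the sign pattern of the off-diagonals and the strict dominance of the diagonal, giving the invertibility of $\bm{A}^{(m)}$. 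The main obstacle is the sign and telescoping analysis in the range $\alpha\in[1,2)$: after integration by parts the tent $\phi_j$ is replaced by $\phi_j'$, whose sign flips across the two halves of the tent support, so each off-diagonal entry has to be rewritten so that the algebraic cancellations expose the second-difference form $m_k$ controlled by Lemma \ref{lem:mod-sch-sign}. Once this bookkeeping is carried out, the remainder of the argument proceeds exactly as in the proof of Theorem \ref{thm:dia-dom-mat}.
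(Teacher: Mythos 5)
Your proposal is correct and follows essentially the same route as the paper: on uniform grids $\eta_i^\alpha$ vanishes, the exact (SOE-free) entries collapse to $2h^{-\alpha}/\alpha$ on the diagonal, $h^{-\alpha}(2^{1-\alpha}+\alpha-2)/(\alpha(1-\alpha))$ on the first off-diagonals and $h^{-\alpha}m_{|i-j|}$ beyond (with the logarithmic analogues at $\alpha=1$), Lemma \ref{lem:mod-sch-sign} fixes the signs, the row sums telescope to a positive boundary quantity, and the SOE error is absorbed as a small entrywise perturbation exactly as in \eqref{matrix-form:a2}--\eqref{matrix-form:a4}. The only minor quantitative slip is the claim that the row-sum perturbation is at most $C(b-a)\epsilon$ in all regimes: for $\alpha\in[1,2)$ the per-entry perturbation is $O(\epsilon/\alpha)$ without an $h$ factor (since $\int|\phi_j'|\,dy=2$), so the row-sum perturbation is $O(N\epsilon)$, which changes the $\epsilon$-threshold's dependence on $N$ (cf.\ \eqref{unique:condition:md:e2}) but not the conclusion that a sufficiently small $\epsilon$ suffices.
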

\begin{proof}
We first check the signs of the entries $a_{i,j}^{(m)}$ of $\bm{A}^{(m)}$. From \eqref{matrix-form:c1}--\eqref{matrix-form:c3}, it is immediate to see that the diagonal entries satisfy 
\begin{equation*}
a_{i,i}^{(m)} = 2h^{-\alpha}/\alpha>0, \quad i=1,2,\ldots, N-1.
\end{equation*}
Moreover, analogous to \eqref{matrix-form:a2}--\eqref{matrix-form:a4}, by combining \eqref{matrix-form:c1}--\eqref{matrix-form:c3} with \eqref{equ:soe}, we deduce that the first off-diagonal entries satisfy
\begin{equation*}
\left\{\begin{aligned}
&h^{-\alpha}\frac{2^{1-\alpha}+\alpha-2}{\alpha(1-\alpha)} - \frac{h\epsilon}{2}   \leq a_{i,i\pm 1}^{(m)} \leq  h^{-\alpha}\frac{2^{1-\alpha}+\alpha-2}{\alpha(1-\alpha)} + \frac{h\epsilon}{2},\quad \alpha \in (0,1) ,\\
&\frac{\ln 2-1}{h} - \epsilon \leq a_{i,i\pm 1}^{(m)} \leq \frac{\ln 2-1}{h} + \epsilon,\quad \alpha=1,\\
&h^{-\alpha}\frac{2^{1-\alpha}+\alpha-2}{\alpha(1-\alpha)} - \frac{\epsilon}{\alpha}   \leq a_{i,i\pm 1}^{(m)} \leq  h^{-\alpha}\frac{2^{1-\alpha}+\alpha-2}{\alpha(1-\alpha)} + \frac{\epsilon}{\alpha},\quad \alpha \in (1,2),
\end{aligned}\right.
\end{equation*}
and for all other off-diagonal entries, i.e. $|i-j|\geq 2$, we have
\begin{equation*}
\left\{\begin{aligned}
& h^{-\alpha} m_{|i-j|} - h\epsilon  \leq a_{i,j}^{(m)} \leq  h^{-\alpha} m_{|i-j|} + h\epsilon,\quad \alpha \in (0,1), \\
& h^{-1}\ln\frac{(i-j)^2-1}{(i-j)^2} - 2\epsilon \leq a_{i,j}^{(m)} \leq h^{-1}\ln\frac{(i-j)^2-1}{(i-j)^2} + 2\epsilon,\quad \alpha=1,\\
& h^{-\alpha} m_{|i-j|} - \frac{2\epsilon}{\alpha}  \leq a_{i,j}^{(m)} \leq  h^{-\alpha} m_{|i-j|} + \frac{2\epsilon}{\alpha},\quad \alpha \in (1,2).
\end{aligned}\right.
\end{equation*}
Thus, by Lemma \ref{lem:mod-sch-sign} and above discussions, if the SOE tolerance is sufficiently accurate, i.e. 
\begin{equation}\label{unique:condition:md:e1}
\epsilon \leq \left\{ \begin{aligned}
 &h^{-1-\alpha} \min\Big\{\frac{2(2^{1-\alpha}+\alpha-2)}{\alpha(\alpha-1)}, -m_{|i-j|} \Big\},\quad \alpha\in(0,1),\\
& h^{-1} \min\Big\{1-\ln 2, \frac{1}{2} \ln \frac{(i-j)^2}{(i-j)^2-1} \Big\},\quad \alpha=1,\\
& h^{-\alpha} \min \Big\{\frac{2^{1-\alpha}+\alpha-2}{\alpha-1}, \frac{-\alpha m_{|i-j|}}{2} \Big\},\quad \alpha\in(1,2),
\end{aligned}\right.
\end{equation}
then all diagonal entries of $\bm{A}^{(m)}$ are positive,  whereas all non-diagonal entries are negative.

Next, we verify the diagonal dominance of the coefficient matrix $\bm{A}^{(m)}$. For $\alpha \in (0,1)$, a simple calculation shows
\begin{equation*}
\begin{aligned}
& a_{i,i}^{(m)} - \sum_{j=1,j\neq i}^{N-1} |a_{i,j}^{(m)}|= \sum_{j=1}^{N-1} a_{i,j}^{(m)}  \\
& \ge h^{-\alpha}\left\{\begin{aligned}
&\frac{2}{\alpha}+\frac{2^{1-\alpha}+\alpha-2}{\alpha(1-\alpha)} + \sum_{j=2}^{N-2} m_j - (\frac{1}{2}+N-3)h^{1+\alpha}\epsilon, \quad i = 1, N-1,\\
& \frac{2}{\alpha}+\frac{2(2^{1-\alpha}+\alpha-2)}{\alpha(1-\alpha)} + \sum_{j=1}^{i-2} m_{i-j} + \sum_{j=i+2}^{N-1} m_{j-i} - (N-3)h^{1+\alpha}\epsilon, \quad 2 \leq i \leq N-1
\end{aligned}\right.\\
& \geq \frac{h^{-\alpha}}{\alpha(1-\alpha)} 
\left\{\begin{aligned}
& 1-\alpha + (N-1)^{1-\alpha}-(N-2)^{1-\alpha} - (N-\frac{5}{2})h^{1+\alpha}\alpha(1-\alpha)\epsilon, \quad i=1,\,N-1,\\
& \sum_{\kappa=i, N-i} \left[\kappa^{1-\alpha} - (\kappa-1)^{1-\alpha} \right]
 - (N-3)h^{1+\alpha}\alpha(1-\alpha)\epsilon,\quad 2\leq i\leq N-2.
\end{aligned}\right.
\end{aligned}
\end{equation*}
Similarly, for $\alpha=1$, it holds
\begin{equation*}
\begin{aligned}
& a_{i,i}^{(m)} - \sum_{j=1,j\neq i}^{N-1} |a_{i,j}^{(m)}| = \sum_{j=1}^{N-1} a_{i,j}^{(m)}\\
& \quad \geq h^{-1} 
\left\{\begin{aligned}
& 1 + \ln\frac{N-1}{N-2} - (2N-5)h\epsilon,\quad i=1,\, N-1,\\
& \sum_{\kappa=i, N-i} \ln \frac{\kappa}{\kappa-1}  - (2N-6)h\epsilon,\quad 2\leq i\leq N-2,
\end{aligned}\right.
\end{aligned}
\end{equation*}
and for $\alpha \in (1,2)$, we have
\begin{equation*}
\begin{aligned}
& a_{i,i}^{(m)} - \sum_{j=1,j\neq i}^{N-1} |a_{i,j}^{(m)}|= \sum_{j=1}^{N-1} a_{i,j}^{(m)}\\
& \quad \ge \frac{h^{-\alpha}}{\alpha(1-\alpha)} 
\left\{\begin{aligned}
& 1-\alpha + (N-1)^{1-\alpha}-(N-2)^{1-\alpha} - (2N-5)h^\alpha(1-\alpha)\epsilon, \quad i=1,\,N-1,\\
& \sum_{\kappa=i, N-i} \left[\kappa^{1-\alpha} - (\kappa-1)^{1-\alpha} \right]
 - (2N-6)h^\alpha(1-\alpha)\epsilon,\quad 2\leq i\leq N-2.
\end{aligned}\right.
\end{aligned}
\end{equation*}
Thus, if $\epsilon$ further satisfies 
\begin{equation}\label{unique:condition:md:e2}
\epsilon \leq \frac{1}{2h^{\alpha-1}}\left\{\begin{aligned}
& \frac{2}{\alpha h}\min \Big\{1 + \frac{(N-1)^{1-\alpha}-(N-2)^{1-\alpha}}{1-\alpha},\sum_{\kappa=i, N-i}\frac{  \kappa^{1-\alpha} - (\kappa-1)^{1-\alpha}}{1-\alpha} \Big\},\,\alpha \in (0,1),\\
& \min \Big\{1+\ln \frac{N-1}{N-2},\,\sum_{\kappa=i, N-i} \ln \frac{\kappa}{\kappa-1}\Big\}, \quad \alpha=1,\\
& \min \Big\{1 + \frac{(N-1)^{1-\alpha}-(N-2)^{1-\alpha}}{1-\alpha},\sum_{\kappa=i, N-i}\frac{  \kappa^{1-\alpha} - (\kappa-1)^{1-\alpha}}{1-\alpha} \Big\},\,\alpha \in (1,2),
\end{aligned}\right.
\end{equation}
the coefficient matrix $\bm{A}^{(m)}$ is strictly diagonally dominant, which guarantees that the modified fast collocation scheme is uniquely solvable.
\end{proof}
\begin{remark} Analogous to Remark \ref{rem:condition}, for the example considered in Section \ref{sec:num} with $N=2^{13}$, we can numerically verify that the conditions \eqref{unique:condition:md:e1}--\eqref{unique:condition:md:e2} require 
$
\epsilon \le 2.5004\times 10^{-4}
$ for $\alpha\in(0,1)$,
$
\epsilon \le 3.0532\times 10^{-5}
$ for $\alpha=1$ and $
\epsilon \le 3.0542\times 10^{-5} 
$
for $\alpha\in (1,2)$. All cases can be automatically satisfied in our simulation.
\end{remark}

\subsection{Efficient implementation}
In practice, the coefficient matrices $\bm{A}$ and $\bm{A}^{(m)}$ associated with the developed collocation schemes do not need to be assembled explicitly. Instead, based on any iterative Krylov subspace solver, such as the BiCGSTAB method (Algorithm \ref{alg:bicg}), we can develop a fast version BiCGSTAB algorithm by performing efficient matrix-vector multiplications without matrix assembly.
\begin{algorithm}[!ht] 
	\caption{The standard BiCGSTAB method for $\bm{Ag}=\bm{f}$ \cite{BB94}}\label{alg:bicg}
	\begin{algorithmic}[1]
		\STATE Compute $\bm{r}^{(0)}=\bm{f}-\bm{Ag}^{(0)}$ for some initial guess $\bm{g}^{(0)}$
		\STATE Choose $\tilde{\bm{r}}$ (for example, $\tilde{\bm{r}}=\bm{r}^{(0)}$)
		\FOR {$i=1,2,\cdots$}
		\STATE $\rho_{i-1}=\tilde{\bm{r}}^\top \bm{r}^{(i-1)}$
		\STATE {\bf if} $\rho_{i-1}=0$ method fails
		\IF {$i=1$}
		\STATE $\bm{p}^{(i)}=\bm{r}^{(i-1)}$
		\ELSE
		\STATE $\beta_{i-1}=\left(\rho_{i-1}/\rho_{i-2}\right)(\alpha_{i-1}/\omega_{i-1})$
		\STATE $\bm{p}^{(i)}=\bm{r}^{(i-1)}+\beta_{i-1}(\bm{p}^{(i-1)}-\omega_{i-1}\bm{\mu}^{(i-1)})$
		\ENDIF
		\STATE compute $\bm{\mu}^{(i)}=\bm{Ap}^{(i)}$
		\STATE $\alpha_i=\rho_{i-1}/\tilde{\bm{r}}^\top\bm{\mu}^{(i)}$
		\STATE $\bm{s}=\bm{r}^{(i-1)}-\alpha_i\bm{\mu}^{(i)}$
		\STATE check norm of $\bm{s}$; if small enough: set $\bm{g}^{(i)}=\bm{g}^{(i-1)}+\alpha_i\bm{p}^{(i)}$ and stop
		\STATE comput $\bm{t}=\bm{As}$
		\STATE $\omega_i=\bm{t}^\top \bm{s}/\bm{t}^\top \bm{t}$
		\STATE $\bm{g}^{(i)}=\bm{g}^{(i-1)}+\alpha_i\bm{p}^{(i)}+\omega_i\bm{s}$
		\STATE $\bm{r}^{(i)}=\bm{s}-\omega_i\bm{t}$
		\STATE check convergence; continue if necessary
		\STATE for continuation it is necessary that $\omega_i\neq0$
		\ENDFOR
	\end{algorithmic}
\end{algorithm}

\begin{theorem}\label{thm:operation} For $\alpha \in (0,1)$, the matrix-vector multiplication $\bm{Ag}$ for any vector $\bm{g}$  can be carried out in $\mO(NN_{e})$ operations. Moreover, the total memory requirement is also of order $\mO(NN_{e})$. 
\end{theorem}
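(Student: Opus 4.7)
The plan is to exploit the structural splitting of $\mathcal{L}_h$ into a sparse local part plus two nonlocal parts that each admit a short constant-cost recursion, so that $\bm{A}\bm{g}$ is never formed entry-by-entry but rather evaluated by sweeping the recursions once in each direction for each SOE node.

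First I would split the product $\bm{A}\bm{g}$ into three contributions matching the three lines of $\mathcal{L}_h[v]_i$ for $\alpha\in(0,1)$: (i) the tridiagonal contribution from the local integral $\mathcal{L}_{loc}[g]_i$, (ii) the left nonlocal contribution $-\sum_{s=1}^{N_e}\theta_s\mathcal{S}_{i,s}^{L}[g]$, and (iii) the right nonlocal contribution $-\sum_{s=1}^{N_e}\theta_s\mathcal{S}_{i,s}^{R}[g]$. Part (i) involves only entries $a_{i,i-1},a_{i,i},a_{i,i+1}$ given in closed form via $h_i$, so its evaluation on $\bm{g}$ clearly costs $\mathcal{O}(N)$ time and memory.

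For part (ii), for each fixed $s\in\{1,\ldots,N_e\}$ I would use the recursion \eqref{Lap_approx:e3a}, initialized by $\mathcal{S}_{1,s}^{L}[g]=0$, to produce $\{\mathcal{S}_{i,s}^{L}[g]\}_{i=2}^{N-1}$ in a single forward sweep. Each step of the recursion requires only a constant number of multiplications and additions involving the precomputed coefficients $\omega_{i,s},\mu_{i,s}^{L},\nu_{i,s}^{L}$ together with $g_{i-1},g_{i-2}$ and the previous value $\mathcal{S}_{i-1,s}^{L}[g]$, so the sweep costs $\mathcal{O}(N)$; summing over the $N_e$ exponentials gives $\mathcal{O}(NN_e)$. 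Part (iii) is handled identically via the backward recursion \eqref{Lap_approx:e5a} at the same cost. Once all $\mathcal{S}_{i,s}^{L}[g]$ and $\mathcal{S}_{i,s}^{R}[g]$ are available, forming $\sum_{s=1}^{N_e}\theta_s \mathcal{S}_{i,s}^{L}[g]$ and its right counterpart for each $i$ costs $\mathcal{O}(N_e)$ per index $i$, hence $\mathcal{O}(NN_e)$ in total. Adding the three contributions yields the overall complexity claim.

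For the memory accounting I would observe that the coefficients $\{\omega_{i,s},\mu_{i,s}^{L},\nu_{i,s}^{L},\mu_{i,s}^{R},\nu_{i,s}^{R}\}$ depend only on the grid and on the SOE data $\{\lambda_s,\theta_s\}$, not on $\bm{g}$; they are therefore precomputed once (requiring $\mathcal{O}(NN_e)$ storage) and reused across every BiCGSTAB iteration. The SOE nodes and weights occupy $\mathcal{O}(N_e)$ storage, the tridiagonal local coefficients occupy $\mathcal{O}(N)$, and the working arrays for $\{\mathcal{S}_{i,s}^{L/R}[g]\}$ can be overwritten in place, needing only $\mathcal{O}(N)$ at any time if the weighted sums are accumulated on the fly. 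Thus the dominant memory cost is $\mathcal{O}(NN_e)$.

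There is no real analytic obstacle here; the only point that deserves care is verifying that the recursions in \eqref{Lap_approx:e3a} and \eqref{Lap_approx:e5a} faithfully reproduce the exact matrix-vector product defined by the matrix entries \eqref{matrix-form:a}, so that no implicit assembly of $\bm{A}$ is required. This amounts to checking that $\mathcal{S}_{i,s}^{L}[\Pi_h g]$ equals the convolution integral $\int_{-\infty}^{x_{i-1}} e^{-\lambda_s(x_i-y)}\Pi_h g(y)\,dy$ for the piecewise linear interpolant with nodal values $g_j$, which is exactly the identity used to derive \eqref{Lap_approx:e3a}. Once this consistency is noted, the complexity and memory bounds follow by the counting argument above.
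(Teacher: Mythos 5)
Your proposal is correct and follows essentially the same route as the paper's proof: split $\bm{Ag}$ into the tridiagonal local part plus the two nonlocal parts, evaluate each nonlocal part by a single forward (resp.\ backward) sweep of the recursion at $\mO(1)$ cost per step and per SOE node, and attribute the dominant $\mO(NN_e)$ memory to the precomputed coefficients $\{\omega_{i,s},\mu_{i,s}^{L,R},\nu_{i,s}^{L,R}\}$. Your additional observations---that the working arrays for $\mS_{i,s}^{L/R}[g]$ can be reduced to $\mO(N)$ by accumulating the weighted sums on the fly, and that one should verify the recursions reproduce the product defined by \eqref{matrix-form:a}---are minor refinements of, not departures from, the paper's argument.
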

\begin{proof}
  On the one hand, the terms $ \mS_{i,s}^{L}[\bm{g}]$ and $ \mS_{i,s}^{R}[\bm{g}]$ can be efficiently computed using the recurrence relations \eqref{Lap_approx:e3a} and \eqref{Lap_approx:e5a} with only $\mO(1)$ operations per evaluation, which leads to an overall computational complexity of order $\mO(N)$ for each $s$. Consequently, the computational cost for the fast approximations of $\mL_{l}[\bm{g}]$ and $\mL_{r}[\bm{g}]$ is $\mO(NN_{e})$ in total. On the other hand, the local part $\mL_{loc}[\bm{g}]$ incurs a computational cost of $\mO(N)$. In summary, the matrix-vector multiplication $\bm{Ag}$ can be evaluated with $\mO(NN_{e})$ operations.

 It is obvious that the stiffness matrix $\bm{A}$ has a complicated structure. However, we do not need to generate it explicitly. Instead, the algorithm can be performed in $\mO(NN_{e})$ memory. Specifically, the storage of parameters $\theta_s$ for $1\leq s\leq N_{e}$ (resp. $\tilde{\theta}_s$ for $1\leq s\leq \tilde{N}_{e}$) requires $\mO(N_{e})$ memory, while the coefficients $\{\omega_{i,s}\}$, $\{\mu_{i,s}^{\pm}\}$ and $\{\nu_{i,s}^{\pm}\}$ for $1\leq i\leq N$, $1\leq s\leq N_{e}$  contribute $\mO(NN_{e})$ memory. 
   In addition, the input vector $\bm{g}$ needs $\mO(N)$ memory. Finally, the recurrence variables $ \mS_{i,s}^{L}[\bm{g}]$ and $ \mS_{i,s}^{R}[\bm{g}]$ can be updated in $\mO(NN_e)$ memory for all $i$ and $s$, which contribute to $\mO(NN_e)$ memory for the approximations $\mL_{l}[\bm{g}]$ and $\mL_{r}[\bm{g}]$. Meanwhile, the local approximation $\mL_{loc}[\bm{g}]$ in \eqref{Lap_approx:e6} requires $\mO(N)$ memory. Therefore, the total memory requirement amounts to $\mO(N_{e})+\mO(NN_{e})+\mO(N)=\mO(NN_{e})$.
 \end{proof}

Similarly, for $\alpha\in[1,2)$ we have the same conclusion.
\begin{theorem}\label{thm:memory}
For $\alpha \in [1,2)$, the computational complexity and memory requirement of matrix-vector multiplication $\bm{Ag}$ for any vector $\bm{g}$ are $\mO(N\tilde{N}_{e})$.
 \end{theorem}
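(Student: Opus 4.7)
The plan is to mirror the argument of Theorem \ref{thm:operation} but with the SOE representation \eqref{Lap_approx:e4} for the left-sided nonlocal part and its right-sided analog in \eqref{Lap_approx:e5}, together with the new recurrences \eqref{Lap_approx:e4a} and the second line of \eqref{Lap_approx:e5a}. Since the operator $\mL_h[\bm{g}]_i$ for $\alpha\in[1,2)$ again decomposes into a three-point local stencil (the coefficients involving $h_i^{-\alpha}$ or $\ln h_i$ in front of $v_{i-1}, v_i, v_{i+1}$) and the nonlocal sums $\frac{1}{\alpha}\sum_{s=1}^{\tilde{N}_{e}}\tilde{\theta}_s\tilde{\mS}_{i,s}^{L}[\bm{g}]$ and $\frac{1}{\alpha}\sum_{s=1}^{\tilde{N}_{e}}\tilde{\theta}_s\tilde{\mS}_{i,s}^{R}[\bm{g}]$, it suffices to bound each piece separately.

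First, I would verify that the recurrence \eqref{Lap_approx:e4a} evaluates $\tilde{\mS}_{i,s}^{L}[\bm{g}]$ in $\mO(1)$ time per index, provided the precomputed coefficients $\tilde{\omega}_{i,s}$ and $\tilde{\mu}_{i,s}^{L}$ are available; sweeping $i$ from $2$ to $N-1$ therefore costs $\mO(N)$ work for each fixed $s$, and summing over $s=1,\ldots,\tilde{N}_{e}$ yields $\mO(N\tilde{N}_{e})$ work for the left-sided contribution. The right-sided integrals $\tilde{\mS}_{i,s}^{R}[\bm{g}]$ are handled symmetrically by the backward recurrence in \eqref{Lap_approx:e5a}, again in $\mO(N\tilde{N}_{e})$. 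Forming the weighted sums $\sum_{s}\tilde{\theta}_s\tilde{\mS}_{i,s}^{L/R}[\bm{g}]$ and adding the three-point local contribution add only an extra $\mO(N\tilde{N}_{e})+\mO(N)$, which is absorbed into $\mO(N\tilde{N}_{e})$.

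For memory, I would list the storage budget in the same spirit: the SOE parameters $\{\tilde{\lambda}_s,\tilde{\theta}_s\}_{s=1}^{\tilde{N}_{e}}$ occupy $\mO(\tilde{N}_{e})$; the precomputed recurrence weights $\{\tilde{\omega}_{i,s},\tilde{\mu}_{i,s}^{L},\tilde{\mu}_{i,s}^{R}\}$ for $1\le i\le N$, $1\le s\le \tilde{N}_{e}$ occupy $\mO(N\tilde{N}_{e})$; the input vector $\bm{g}$ and the local-stencil coefficients occupy $\mO(N)$; and the intermediate recurrence variables $\tilde{\mS}_{i,s}^{L/R}[\bm{g}]$ require at most $\mO(N\tilde{N}_{e})$ (in fact one can reduce this to $\mO(\tilde{N}_{e})$ by updating them in place during the sweep, but the stated bound already suffices). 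Summing yields the claimed $\mO(N\tilde{N}_{e})$ memory footprint, and the matrix $\bm{A}$ is never assembled explicitly.

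No serious obstacle is expected: the argument is parallel to that of Theorem \ref{thm:operation}, with the only bookkeeping point being that for $\alpha=1$ the local stencil involves $\ln h_i$ coefficients rather than $h_i^{-\alpha}$, but this does not affect the $\mO(N)$ cost of the local step. The only care needed is to confirm that each term in \eqref{Lap_approx:e4a} truly depends on $\bm{g}$ only through $\mO(1)$ already-computed quantities at step $i-1$, so that the recursive sweep is strictly linear in $N$; this is immediate from the form $\tilde{\mS}_{i,s}^{L}[\bm{g}] = \tilde{\omega}_{i,s}\tilde{\mS}_{i-1,s}^{L}[\bm{g}] + \tilde{\mu}_{i,s}^{L}(g_{i-1}-g_{i-2})$.
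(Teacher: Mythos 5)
Your proposal is correct and follows exactly the route the paper intends: the paper states this theorem without proof, remarking only that it holds "similarly" to Theorem \ref{thm:operation}, and your argument is precisely that adaptation, replacing the recurrences for $\mS_{i,s}^{L/R}$ with those for $\tilde{\mS}_{i,s}^{L/R}$ in \eqref{Lap_approx:e4a} and \eqref{Lap_approx:e5a} and carrying out the same operation and storage count. The observation that the intermediate variables could in principle be reduced to $\mO(\tilde{N}_{e})$ by in-place updating is a small extra refinement beyond what the paper records, but it does not change the stated bound.
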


Based on the above discussions, we propose Algorithm \ref{alg:mv} to efficiently perform the matrix-vector multiplication $\bm{Ag}$ that arises in the BiCGSTAB method (see Algorithm \ref{alg:bicg}) for given vector $\bm{g}$. 
\begin{algorithm}
    \caption{Efficient matrix-vector multiplication $\bm{Ag}$ for any vector $\bm{g}$}
    \begin{algorithmic}[1]\label{alg:mv}
        \REQUIRE the fractional order $\alpha\in(0,1)$ (resp. $\alpha\in[1,2)$), the weights $\bm{\theta}:=\{\theta_s\}$ (resp. $\bm{\tilde{\theta}}:=\{\tilde{\theta}_s\}$) of the SOE approximation, the grid points $\{x_{i}\}$, the coefficients $\{\omega_{i,s},\mu_{i,s}^{L,R},\nu_{i,s}^{L,R}\}$ (resp. $\{\tilde{\omega}_{i,s},\tilde{\mu}_{i,s}^{L,R}\}$) and vector  $\bm{g}$
        \ENSURE the matrix-vector multiplication $\bm{Ag}$ 
        \STATE Set $N=\text{length}(\bm{g})$, $N_{e}=\text{length}(\bm{\theta})$ (resp.  $\tilde{N}_{e}=\text{length}(\tilde{\bm{\theta}})$), $\bm{\mathcal{S}}^L:=\{\mathcal{S}_{i,s}^L\}=\text{zeros}(N,N_e)$, $\bm{\mathcal{S}}^R:=\{\mathcal{S}_{i,s}^R\}=\text{zeros}(N,N_e)$ (resp. $\bm{\tilde{\mathcal{S}}}^L:=\{\tilde{\mathcal{S}}_{i,s}^L\}=\text{zeros}(N,\tilde{N}_e)$, $\bm{\tilde{\mathcal{S}}}^R:=\{\tilde{\mathcal{S}}_{i,s}^R\}=\text{zeros}(N,\tilde{N}_e)$)
        \FOR {$s=1:N_{e}$ (resp. $\tilde{N}_{e}$)}
        \FOR {$i=2:N$}
		\STATE Compute $\mS_{i,s}^{L}$ (resp. $\tilde{\mS}_{i,s}^{L}$) by the recursive formula \eqref{Lap_approx:e3a} (resp. \eqref{Lap_approx:e4a}) with $\mS_{1,s}^{L}=0$ (resp. $\tilde{\mS}_{1,s}^{L}=0$ ) 
        \ENDFOR
        \FOR {$i=N-1:1$}
		\STATE Compute $\mS_{i,s}^{R}$ (resp. $\tilde{\mS}_{i,s}^{R}$) by the recursive formula \eqref{Lap_approx:e5a} with $\mS_{N,s}^{R}=0$ (resp. $\tilde{\mS}_{N,s}^{R}=0$)
        \ENDFOR
		\ENDFOR
        \STATE Compute the left-sided approximation $\mL_l[\bm{g}]$ by formula \eqref{Lap_approx:e3} (resp. \eqref{Lap_approx:e4})
        \STATE Compute the right-sided approximation $\mL_r[\bm{g}]$ by formula \eqref{Lap_approx:e5}
        \STATE Compute the local part $\mL_{loc}[\bm{g}]$ by formula \eqref{Lap_approx:e6}
        \STATE Assemble the matrix-vector multiplication $\bm{Ag}:=\mL_l[\bm{g}]+\mL_r[\bm{g}]+\mL_{loc}[\bm{g}]$
    \end{algorithmic}
\end{algorithm}

\begin{remark}
Note that $N_{e}$ or $\tilde{N}_{e}$ is usually of order $\mO(\log^2N)$ as established in Lemma \ref{lem:soe}. Consequently, the total computational cost and memory requirement for the matrix-vector multiplication are reduced from $\mO(N^2)$ to $\mO(N\log^2N)$. This results in a significant improvement in computational efficiency especially for large-scale simulations. Moreover, the proposed fast algorithm is applicable to general nonuniform grids.
\end{remark}

\begin{remark}
Since the modified fast collocation scheme \eqref{sch:fast_collocation_md} only alters the local part approximation compared to the original fast collocation scheme \eqref{sch:fast_Collocation}, therefore the conclusions of Theorems \ref{thm:operation} and \ref{thm:memory} remain valid, i.e., the matrix-vector multiplication $\bm{A}^{(m)}\bm{g}$ for any vector $\bm{g}$ can be carried out in $\mO(NN_e)$ operations for $\alpha\in(0,1)$ (resp. $\mO(N\tilde{N}_e)$ operations for $\alpha\in[1,2)$) with a memory cost of the same order. Meanwhile, Algorithm \ref{alg:mv} can be readily extended to handle the matrix-vector multiplication $\bm{A}^{(m)}\bm{g}$ just with \eqref{Lap_approx:e6} replaced by \eqref{equ:mod_loc3}. 
\end{remark}

\begin{remark} It should be noted that the condition numbers of the coefficient matrices in \eqref{sch:fast_Collocation:matrix} and \eqref{equ:matrix-form-mod} may deteriorate in large-scale simulations, consequently resulting in a sharp increase in iteration counts. Thus, to further improve computational efficiency, a banded preconditioner is introduced. For the fast collocation scheme \eqref{sch:fast_Collocation}, Theorem \ref{thm:dia-dom-mat} establishes that the corresponding stiffness matrix $\bm{A}$ are strictly diagonally dominant, and the discussions in \eqref{matrix-form:a1}--\eqref{matrix-form:a4} show that 
\begin{equation*}
a_{ii}=a^d_{ii},\quad \text{and} \quad
|a_{i, j}-a^d_{i, j}|\leq\left\{\begin{aligned}
&\frac{h_{i-1}}{2}\epsilon, &\quad j=i-1,\\
&\frac{h_{i+2}}{2}\epsilon, &\quad j=i+1,\\
&\frac{h_{j+1}+h_j}{2}\epsilon, &\quad |j-i|\geq 2.
\end{aligned}\right.
\end{equation*} 
Exploiting this property, a banded preconditioner $\bm{P}$ with a bandwidth of $2l-1$ ($l \geq 1$) can be constructed from the direct stiffness matrix $\bm{A}^d$. It is easy to see that the preconditioner $\bm{P}$ remains diagonally dominant, and as $\epsilon$ approaches zero, $\bm{P}^{-1}$ shall approach $\bm{A}^{-1}$.  The same reasoning extends to the modified fast collocation scheme \eqref{sch:fast_collocation_md}, where the stiffness matrix $\bm{A}^{(m)}$ also preserves diagonal dominance (see Theorem \ref{thm:dia-dom-mat-m}), and the corresponding banded preconditioner can thus be constructed analogously. The proposed preconditioner shall effectively improve the condition number, leading to fewer iterations and thus higher computational efficiency. Numerical experiments presented in Section \ref{sec:num} further confirm the effectiveness of the proposed preconditioner.
\end{remark}

\section{Error analysis}\label{sec:ana}

In this section, we restrict our error analysis to the case $\alpha\in(0,1)$. However, the case $\alpha\in[1,2)$ requires much more involved, which will be explored in future work. Without loss of generality, we take $(a,b)=(0,2L)$, and given an even integer $N$, we consider the following symmetric graded spatial grids
\begin{equation}\label{grad_mesh:e1}
	x_i=\left\{\begin{aligned}
		&L\Big(\frac{2i}{N}\Big)^\kappa, &i=0,1,\cdots,N/2,\\
		&2L-L\Big(2-\frac{2i}{N}\Big)^\kappa=2L-x_{N-i},&i=N/2+1,\cdots, N,
	\end{aligned}\right.
\end{equation}
where the constant $\kappa\geq 1$ is a user-defined grading parameter. In particular, the case $\kappa=1$ reduces to the uniform spatial partition. Throughout the paper, we use $A \lesssim B$ to represent there is a positive constant $c$ such that $A \le c B$. By the mean value theorem, it is easy to verify that the grid size $h_i=x_i-x_{i-1}$ satisfies 
\begin{equation}\label{grad_mesh:e2}
	h_i\lesssim 
        \left\{
	\begin{aligned}
		& N^{-\kappa}i^{\kappa-1},    &i = 1,2,\dots, N/2,\\
		& N^{-\kappa}(N+1-i)^{\kappa-1}, & i=N/2+1,\cdots, N,
	\end{aligned}\right.
    \quad \max_{1\le i \le N} h_i \lesssim N^{-1},
\end{equation}
which will be used frequently in the subsequent analysis. 

Motivated by \eqref{reg:ass}, we adopt a more general and weaker regularity assumption:
\begin{equation}\label{model:regu}
	\Big|\frac{\partial^{\ell}}{\partial x^{\ell}} u(x)\Big|\lesssim[(x-a)(b-x)]^{\sigma-\ell},  ~\ell=0, 1, 2,
\end{equation}
where $\sigma\in (0,\frac{\alpha}{2}]$ is a fixed parameter.

In the following, we shall present some preliminary lemmas. First, we give an estimate for the left-sided nonlocal integral error of the interpolation.
\begin{lemma}\label{lem:err_hisL}
	Assume that \eqref{model:regu} holds. Then, for $1\leq i\le N/2$, we have
    \begin{equation*}
		\Big| \int_{-\infty}^{x_{i-1}} \frac{\Pi_hu(y)-u(y)}{(x_{i}-y)^{1+\alpha}} dy \Big|
		\lesssim \left\{\begin{aligned}
		 & N^{\kappa(\alpha-\sigma)} i^{-\min\{\kappa(1+\alpha), \, \kappa(\alpha-\sigma)+2-\alpha\}}, & \kappa(1+\sigma)<2,\\
		 & N^{\kappa(\alpha-\sigma)} i^{-\kappa(1+\alpha)} \max\{\ln i,i^\alpha\}, & \kappa(1+\sigma)=2,\\
		 & N^{\kappa(\alpha-\sigma)} i^{-\kappa(\alpha-\sigma)-2+\alpha}, & \kappa(1+\sigma)>2.
		 \end{aligned}\right.
	\end{equation*}	
\end{lemma}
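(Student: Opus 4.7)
The plan is to exploit the compact support of both $u$ and $\Pi_h u$ (they vanish on $(-\infty,0]$ by the boundary condition in \eqref{model:nonlocal}) to reduce the improper integral to $\int_0^{x_{i-1}}$, and then split this into the leftmost subinterval $[0,x_1]$ and the successive subintervals $[x_{j-1},x_j]$ for $j=2,\dots,i-1$. On each piece I would combine the piecewise linear interpolation error estimate with the regularity bound \eqref{model:regu} and the graded-mesh size estimate \eqref{grad_mesh:e2}, then sum carefully via a dyadic-type split of the summation range. The case $i=1$ is trivial since the integration domain $(-\infty,x_0]=(-\infty,0]$ gives a zero integrand.

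For the boundary subinterval $[0,x_1]$ (relevant only when $i\ge 2$), I would use the pointwise bounds $|u(y)|\lesssim y^\sigma$ (obtained by integrating $|u'|\lesssim s^{\sigma-1}$ from $0$) and $|\Pi_h u(y)|\le |u_1|\lesssim x_1^\sigma$. Together with $x_i-y\gtrsim x_i$ for $i\ge 2$, this gives a contribution $\lesssim x_1^{1+\sigma}/x_i^{1+\alpha}\lesssim N^{\kappa(\alpha-\sigma)}i^{-\kappa(1+\alpha)}$, which is automatically absorbed into the first term of the stated $\min$.

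On each interior subinterval $[x_{j-1},x_j]$ with $2\le j\le i-1$, I would invoke the standard estimate $\|u-\Pi_hu\|_{L^\infty[x_{j-1},x_j]}\lesssim h_j^2 \max_{[x_{j-1},x_j]}|u''|\lesssim h_j^2 x_{j-1}^{\sigma-2}$ (using $|u''(x)|\lesssim x^{\sigma-2}$ on the left half, where $(b-x)$ is bounded below). For the kernel, $\int_{x_{j-1}}^{x_j}(x_i-y)^{-1-\alpha}dy\lesssim h_j(x_i-x_j)^{-1-\alpha}$. Inserting $h_j\lesssim N^{-\kappa}j^{\kappa-1}$, $x_{j-1}\gtrsim N^{-\kappa}j^\kappa$ and $x_i-x_j\gtrsim N^{-\kappa}(i^\kappa-j^\kappa)$ from \eqref{grad_mesh:e2}, the powers of $N$ collapse to $N^{\kappa(\alpha-\sigma)}$, and the contribution reduces to
\[
N^{\kappa(\alpha-\sigma)}\sum_{j=2}^{i-1}\frac{j^{\kappa(1+\sigma)-3}}{(i^\kappa-j^\kappa)^{1+\alpha}}.
\]

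The main obstacle is to estimate this sum sharply in the three parameter regimes. My plan is to split at $j=i/2$. For $j\le i/2$, use $i^\kappa-j^\kappa\gtrsim i^\kappa$ and apply standard power-sum estimates to $\sum j^{\kappa(1+\sigma)-3}$, yielding the three different behaviors: a bounded sum when $\kappa(1+\sigma)<2$, a logarithmic factor $\ln i$ at the borderline $\kappa(1+\sigma)=2$, and a growing factor $i^{\kappa(1+\sigma)-2}$ when $\kappa(1+\sigma)>2$. For $i/2<j\le i-1$, use the mean value inequality $i^\kappa-j^\kappa\gtrsim i^{\kappa-1}(i-j)$ to reduce to the convergent tail $\sum_{k\ge 1}k^{-1-\alpha}$, which contributes $i^{\kappa(\sigma-\alpha)-2+\alpha}$ uniformly in the three cases. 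Adding the two pieces and taking the worst term per regime reproduces the stated trichotomy; in particular the $\max\{\ln i,i^\alpha\}$ at $\kappa(1+\sigma)=2$ arises naturally as the small-$j$ bound $i^{-\kappa(1+\alpha)}\ln i$ competes with the large-$j$ bound $i^{-\kappa(1+\alpha)+\alpha}$. The delicate bookkeeping is the determination of which range dominates in the non-borderline cases, which is controlled by comparing $\kappa(1+\alpha)$ with $\kappa(\alpha-\sigma)+2-\alpha$.
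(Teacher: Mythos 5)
Your proposal is correct and follows essentially the same route as the paper: the same decomposition into the boundary cell $[0,x_1]$ plus the interior cells, the same interpolation-error and graded-mesh bounds leading to the sum $N^{\kappa(\alpha-\sigma)}\sum_j j^{\kappa(1+\sigma)-3}(i^\kappa-j^\kappa)^{-1-\alpha}$, and the same split at $j\approx i/2$ with the power-sum trichotomy for the far part and an $i^{-\kappa(\alpha-\sigma)-2+\alpha}$ bound for the near part (you sum the convergent series $\sum k^{-1-\alpha}$ where the paper telescopes the exact kernel integral, but the bounds coincide). Your observation that the $i=1$ case is identically zero by the exterior condition is a clean simplification of the paper's separate $i=1$ computation, and all your exponent bookkeeping checks out against the stated trichotomy.
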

\begin{proof}	Due to the zero boundary condition of \eqref{model:nonlocal}, we decompose the integral into two parts
\begin{equation}\label{err_hisL:e1}
	\begin{aligned}
		\int_{-\infty}^{x_{i-1}} \frac{\Pi_hu(y)-u(y)}{(x_i-y)^{1+\alpha}} dy 
		& =  \int_{x_0=0}^{x_1} \frac{\Pi_hu(y)-u(y)}{(x_i-y)^{1+\alpha}} dy   + \sum_{j=1}^{i-2}\int_{x_j}^{x_{j+1}} \frac{\Pi_hu(y)-u(y)}{(x_i-y)^{1+\alpha}} dy \\
		&=: I_1+I_2.
	\end{aligned}
\end{equation}

We address the term $I_1$ by considering the cases $i=1$ and $i>1$ separately. First, for $i=1$, the integration by parts formula gives
\begin{equation*}
    \begin{aligned}
		|I_1| & =\Big| -\frac{1}{\alpha} \int_{0}^{x_1} (x_1-y)^{-\alpha} \big(\Pi_hu(y)-u(y)\big)' dy \Big|\\
            & \leq \frac{1}{\alpha}\, \Big| \int_{0}^{x_1} (x_1-y)^{-\alpha} \frac{u(x_1)-u(x_0)}{x_1-x_0} dy \Big|  
            +  \frac{1}{\alpha}\,  \int_{0}^{x_1} (x_1-y)^{-\alpha} |u'(y)| dy\\
            & \le \frac{x_1^{-\alpha}}{\alpha(1-\alpha)}  \int_{0}^{x_1} |u'(y)|dy   
             +  \frac{1}{\alpha}\,  \int_{0}^{x_1} (x_1-y)^{-\alpha} |u'(y)| dy.    \end{aligned}
	\end{equation*}
Note that assumption \eqref{model:regu} implies that $|u'(y)|\lesssim y^{\sigma-1}$ for $y\in (0, x_1)$. Substituting this bound into the expression for $I_1$ and applying the change of variables $y= x_1 t$ to the second integral, we obtain
    \begin{equation*}
    \begin{aligned}
    |I_1|  & \lesssim \frac{x_1^{-\alpha}}{\alpha(1-\alpha)}  \int_{0}^{x_1} y^{\sigma-1}dy  
             +   \frac{1}{\alpha} \int_{0}^{x_1} (x_1-y)^{-\alpha} y^{\sigma-1} dy\\
         & \lesssim \frac{x_1^{\sigma-\alpha}}{\sigma\alpha(1-\alpha)} + \frac{x_1^{\sigma-\alpha}}{\alpha}\int_0^1 (1-t)^{-\alpha}t^{\sigma-1}dt\\
         & \lesssim \frac{1}{\alpha} \big[ \frac{1}{\sigma(1-\alpha)} + B(1-\alpha,\sigma) \big] x_1^{\sigma-\alpha},
             \end{aligned}
    \end{equation*}
    where $B(1-\alpha,\sigma)$ denotes the Beta function and is finite for $\sigma>0$ and $\alpha<1$. Then using \eqref{grad_mesh:e1}, we obtain
    \begin{equation}\label{err_hisL:e2}
    |I_1|  \lesssim x_1^{\sigma-\alpha} \lesssim N^{\kappa(\alpha-\sigma)}.
    \end{equation}
    
	Next, for the case $2 \le i\le N/2$, we use the definition of $\Pi_hu(y)$ to rewrite
	\begin{equation*}
	\begin{aligned}
		|I_1| & = \Big| \int_{0}^{x_1} (x_i-y)^{-1-\alpha} \Big[ \frac{y-x_0}{h_1} \int_{y}^{x_1} u'(s)ds - \frac{x_1-y}{h_1} \int_{0}^{y} u'(s)ds \Big] dy \Big|\\
		&\le \int_{0}^{x_1} (x_i-y)^{-1-\alpha} dy \int_{0}^{x_1} |u'(s)|ds 
        \lesssim x_1^{\sigma}\int_{0}^{x_1} (x_i-y)^{-1-\alpha} dy.
	\end{aligned}
	\end{equation*}
By applying the mean value theorem for integrals and using the monotonicity and positivity of the function $x^{-1-\alpha}$ over $[x_j, x_{j+1}]$, we obtain
    \begin{equation}\label{equ:er_hisl:e1}
    \int_{x_j}^{x_{j+1}}(x_i-y)^{-1-\alpha}dy \leq h_{j+1}(x_i-x_{j+1})^{-1-\alpha},\quad \text{for}~ j <i-1.
    \end{equation}
It then follows from \eqref{grad_mesh:e1}--\eqref{grad_mesh:e2} and \eqref{equ:er_hisl:e1} that
	\begin{equation}\label{err_hisL:e3}
			|I_1|	\lesssim x_1^{\sigma} h_1 (x_i-x_1)^{-1-\alpha} 
            \lesssim h_1^{1+\sigma}  x_i^{-1-\alpha}  \lesssim N^{\kappa(\alpha-\sigma)} i^{-\kappa(1+\alpha)}, 
	\end{equation}
where we have used the fact that $\frac{1}{2} x_i \le x_i-x_1 \le x_i$ for $i=2,\cdots,N/2$. 
	
We now turn to estimate $I_2$. By the standard linear interpolation error estimate  and the solution regularity \eqref{model:regu}, we have
	\begin{equation}\label{err_hisL:e4}
    \begin{aligned}
		|I_2|
          & \lesssim \sum_{j=1}^{i-2} h_{j+1}^2 \max_{\xi\in[x_j,x_{j+1}]}|u''(\xi)| \int_{x_j}^{x_{j+1}}(x_i-y)^{-1-\alpha}dy \\
	    &\lesssim \sum_{j=1}^{i-2} h_{j+1}^2 x_j^{\sigma-2} \int_{x_j}^{x_{j+1}}(x_i-y)^{-1-\alpha}dy \\
		&= \bigg[\sum_{j=1}^{\lceil i/2 \rceil-1}  +   \sum_{j=\lceil i/2 \rceil}^{i-2}\bigg] h_{j+1}^2x_j^{\sigma-2}\int_{x_j}^{x_{j+1}}(x_i-y)^{-1-\alpha}dy 
		=: I_{21}+I_{22},
	\end{aligned}
	\end{equation}
	where $\lceil i/2 \rceil$ represents the smallest integer that larger than ${i}/{2}$. 
    
    For $I_{21}$, we apply \eqref{equ:er_hisl:e1} and \eqref{grad_mesh:e1}--\eqref{grad_mesh:e2} to obtain
	\begin{equation*}
	\begin{aligned}
		I_{21} & \lesssim \sum_{j=1}^{\lceil i/2 \rceil-1} h_{j+1}^3 x_j^{\sigma-2} (x_i-x_{j+1})^{-1-\alpha}\\
		& \lesssim N^{\kappa(\alpha-\sigma)} \sum_{j=1}^{\lceil i/2 \rceil-1} j^{\kappa(1+\sigma)-3}(i^k-(j+1)^\kappa)^{-1-\alpha}\\ 
        & \lesssim N^{\kappa(\alpha-\sigma)} i^{-\kappa(1+\alpha)} \sum_{j=1}^{\lceil i/2 \rceil-1}j^{\kappa(1+\sigma)-3}.
	\end{aligned}
	\end{equation*}
	 Here, by monotonicity, the fact that
      \begin{equation}\label{err_hisL:e4c}
      \begin{aligned}
          \big(i^\kappa-(j+1)^\kappa\big)^{-1-\alpha} 
          & = i^{-\kappa(1+\alpha)} \Big[1-\left(\frac{j+1}{i}\right)^\kappa\Big]^{-1-\alpha}\\
          & \leq i^{-\kappa(1+\alpha)}\big[1-(\frac{1}{2})^\kappa\big]^{-1-\alpha} 
             \lesssim i^{-\kappa(1+\alpha)},\quad \text{for}~ j \leq \lceil i/2 \rceil -1
    \end{aligned}
    \end{equation}
     has been applied in the last step. 
We next estimate the summation $\sum_{j=1}^{\lceil i/2 \rceil-1}j^{\kappa(1+\sigma)-3}$ that depends on the value of $\kappa(1+\sigma)-3$ as follows:
    \begin{itemize}
    \item Case 1, $\kappa(1+\sigma)-3<-1$. As known, in this case the positive infinite series $\sum_{j=1}^{ +\infty}j^{\kappa(1+\sigma)-3}$ converges, so the partial sum is uniformly bounded:
    \begin{equation*}
    \sum_{j=1}^{\lceil i/2 \rceil-1}j^{\kappa(1+\sigma)-3} \leq \sum_{j=1}^{ +\infty}j^{\kappa(1+\sigma)-3} \lesssim 1.
    \end{equation*}
    \item Case 2, $\kappa(1+\sigma)-3=-1$. In this case, the summation becomes a partial harmonic series, thus
     \begin{equation*}
    \sum_{j=1}^{\lceil i/2 \rceil-1}j^{\kappa(1+\sigma)-3} = \sum_{j=1}^{ \lceil i/2 \rceil-1}j^{-1} \lesssim \ln i.
    \end{equation*}
    \item Case 3, $\kappa(1+\sigma)-3 >-1$. This case is further divided into three parts.
    \begin{itemize}
    \item Part I, $-1<\kappa(1+\sigma)-3<0$.  Since $f(x)=x^{\kappa(1+\sigma)-3}$ is positive and decreasing on $x \ge 0$. Hence, for each $j\ge 1$, $f(j) \leq \int_{j-1}^{j} f(x) dx$ and thus
    \begin{equation*}
    \sum_{j=1}^{\lceil i/2 \rceil-1}j^{\kappa(1+\sigma)-3} \le \sum_{j=1}^{i}f(j)  \leq \int_0^{i} x^{\kappa(1+\sigma)-3}dx \lesssim i^{\kappa(1+\sigma)-2}.
    \end{equation*}
    \item Part II, $\kappa(1+\sigma)-3=0$. We directly have
    \begin{equation*}
    \sum_{j=1}^{\lceil i/2 \rceil-1}j^{\kappa(1+\sigma)-3} = \sum_{j=1}^{\lceil i/2 \rceil-1} 1 \lesssim i.
    \end{equation*}
    \item Part III, $\kappa(1+\sigma)-3>0$. Here $f(x)=x^{\kappa(1+\sigma)-3}$ is positive and increasing on $x \ge 0$. Hence, for each $j \ge 0$, $f(j) \leq \int_{j}^{j+1} f(x) dx$ and thus 
    \begin{equation*}
    \sum_{j=1}^{\lceil i/2 \rceil-1}j^{\kappa(1+\sigma)-3} \le \sum_{j=0}^{i-1} f(j)  \leq \int_0^{i} x^{\kappa(1+\sigma)-3}dx \lesssim i^{\kappa(1+\sigma)-2}.
    \end{equation*}
    \end{itemize}
    In summary, in this case we have
        \begin{equation*}
    \sum_{j=1}^{\lceil i/2 \rceil-1}j^{\kappa(1+\sigma)-3}  \lesssim i^{\kappa(1+\sigma)-2}.
    \end{equation*}
    \end{itemize}
    Combining all three cases, we finally arrive at the estimate for $I_{21} $ as follows:
	\begin{equation}\label{err_hisL:e4a}
			I_{21} \lesssim \left\{
			\begin{aligned}
				& N^{\kappa(\alpha-\sigma)} i^{-\kappa(1+\alpha)}, & \text{if} \quad \kappa(1+\sigma)<2,\\ 
				& N^{\kappa(\alpha-\sigma)} i^{-\kappa(1+\alpha)}\ln i, & \text{if} \quad \kappa(1+\sigma)=2, \\
				& N^{\kappa(\alpha-\sigma)} i^{-\kappa(\alpha-\sigma)-2}, & \text{if} \quad \kappa(1+\sigma)>2.
		\end{aligned}\right.
	\end{equation}
    
On the other hand, for $I_{22}$, we directly use \eqref{grad_mesh:e1}--\eqref{grad_mesh:e2} to obtain
	\begin{equation*}
	\begin{aligned}
		I_{22} & \lesssim  \sum_{j=\lceil i/2\rceil}^{i-2} N^{-\kappa\sigma} j^{\kappa\sigma-2} \int_{x_j}^{x_{j+1}}(x_i-y)^{-1-\alpha}dy \\
		& \lesssim N^{-\kappa\sigma} i^{\kappa\sigma-2} \int_{x_{\lceil i/2 \rceil}}^{x_{i-1}} (x_i-y)^{-1-\alpha}dy, 
        \end{aligned}
        \end{equation*}
        where the equivalence $j^{\beta} \sim i^{\beta}$ for some $\beta=\kappa\sigma-2$ and $\lceil i/2 \rceil \le j \le i$ has been applied. Then, by directly evaluating the integral and applying \eqref{grad_mesh:e2} once more, we obtain
        \begin{equation}\label{err_hisL:e4b}
        \begin{aligned}
        I_{22}&\lesssim N^{-\kappa\sigma} i^{\kappa\sigma-2} [(x_i-x_{i-1})^{-\alpha}-(x_i-x_{\lceil i/2 \rceil})^{-\alpha}] \\
		& \lesssim N^{-\kappa\sigma} i^{\kappa\sigma-2} h_i^{-\alpha}
		 \lesssim N^{\kappa(\alpha-\sigma)} i^{-\kappa(\alpha-\sigma) -2+\alpha}.
	\end{aligned}
	\end{equation}

Finally, inserting the estimates \eqref{err_hisL:e4a}--\eqref{err_hisL:e4b} into \eqref{err_hisL:e4} yields the bound for $I_2$, and then inserting it with the bound $I_1$ in \eqref{err_hisL:e2} and \eqref{err_hisL:e3} into \eqref{err_hisL:e1}, observing the fact that $\kappa(1+\sigma) > 2$ implies $-\kappa(\alpha-\sigma)-2 >  -\kappa(1+\alpha)$, we arrive at the desired result. 
\end{proof}

Next, we give an estimate for the right-sided nonlocal integral error of the interpolation.
\begin{lemma}\label{lem:err_hisR}
	Assume that \eqref{model:regu} holds. Then, for $1 \leq i \leq N/2$, we have
	\begin{equation*}
		\Big| \int_{x_{i+1}}^{+\infty} \frac{\Pi_hu(y)-u(y)}{(y-x_i)^{1+\alpha}} dy \Big|
        \lesssim \left\{\begin{aligned}
		 & N^{\kappa(\alpha-\sigma)} i^{-\min\{\kappa(1+\alpha), \, \kappa(\alpha-\sigma)+2-\alpha\}}, & \kappa(1+\sigma)<2,\\
		 & N^{\kappa(\alpha-\sigma)} i^{-\kappa(1+\alpha)} \max\{\ln N,i^\alpha\}, & \kappa(1+\sigma)=2,\\
		 & N^{\kappa(\alpha-\sigma)} i^{-\kappa(\alpha-\sigma)-2+\alpha}, & \kappa(1+\sigma)>2.
		 \end{aligned}\right.
	\end{equation*}	
\end{lemma}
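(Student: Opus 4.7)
The plan is to follow the proof of Lemma \ref{lem:err_hisL} with two essential adaptations. The integration range $[x_{i+1},+\infty)$ now sweeps across the entire mesh and reaches the far boundary $y=x_N=2L$, where $u$ carries a second weak singularity of the form $(2L-y)^\sigma$; moreover the number of mesh intervals covered is $O(N)$ rather than $O(i)$, and this is what produces $\ln N$ (in place of $\ln i$) in the borderline case $\kappa(1+\sigma)=2$.

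I would first use the zero extension $u\equiv 0$ outside $[0,2L]$ to split
\begin{equation*}
\int_{x_{i+1}}^{+\infty} \frac{\Pi_hu(y)-u(y)}{(y-x_i)^{1+\alpha}}\,dy = J_1 + J_2,\qquad J_1 := \int_{x_{i+1}}^{x_{N-1}},\quad J_2 := \int_{x_{N-1}}^{x_N}.
\end{equation*}
For $J_2$, observe that $i\le N/2$ implies $y-x_i\ge x_{N-1}-x_{N/2}\gtrsim L$, so the kernel is uniformly bounded. Combining this with $\Pi_hu(x_N)=u(x_N)=0$ and the bound $|u(y)|\lesssim (2L-y)^\sigma$ on $[x_{N-1},x_N]$ gives $|\Pi_hu-u|\lesssim h_N^\sigma$ and hence $|J_2|\lesssim h_N^{1+\sigma}\lesssim N^{-\kappa(1+\sigma)}$, which is absorbed into $N^{\kappa(\alpha-\sigma)}i^{-\kappa(1+\alpha)}$ since $i^{-\kappa(1+\alpha)}\gtrsim N^{-\kappa(1+\alpha)}$ for $i\le N/2$.

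For $J_1=\sum_{j=i+1}^{N-2}\int_{x_j}^{x_{j+1}}$, I would apply the standard linear interpolation estimate $|\Pi_hu-u|\lesssim h_{j+1}^2\max|u''|$ together with the two-sided regularity $|u''(\xi)|\lesssim \xi^{\sigma-2}$ for $\xi\in(0,L]$ and $|u''(\xi)|\lesssim (2L-\xi)^{\sigma-2}$ for $\xi\in[L,2L)$ implied by \eqref{model:regu}. Splitting $J_1=J_{11}+J_{12}$ at $j=2i$, the near-collocation part $J_{11}$ (indices $i<j\le 2i$) is bounded exactly as $I_{22}$ in Lemma \ref{lem:err_hisL}, since $x_j\sim x_i$ there, giving the contribution $N^{\kappa(\alpha-\sigma)}i^{-\kappa(\alpha-\sigma)-2+\alpha}$. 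For the far part $J_{12}$ I would further split according to whether $j\le N/2$ (left half) or $j>N/2$ (right half): on the left half use $|u''|\lesssim x_j^{\sigma-2}$ and $x_j-x_i\sim x_j$ (valid since $j\ge 2i$), reducing the estimate to a sum of the same structural form $\sum j^{\kappa(1+\sigma)-3}$ as in $I_{21}$ of Lemma \ref{lem:err_hisL}; on the right half use the mesh symmetry $x_j=2L-x_{N-j}$, $h_{j+1}=h_{N-j}$ and reindex $k=N-j$, together with $|u''|\lesssim x_k^{\sigma-2}$, again reducing to a sum $\sum_k k^{\kappa(1+\sigma)-3}$ of the same form. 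The three-case analysis in $\kappa(1+\sigma)\lessgtr 2$ then proceeds as before, but with a total summation range of order $N$ rather than $i$, which produces $\ln N$ instead of $\ln i$ in the critical case.

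The main obstacle will be the right-half contribution $J_{122}$: for $i$ close to $N/2$ and the smallest $j>N/2$, the separation $y-x_i$ can shrink to $O(N^{-1})$, so the kernel $(y-x_i)^{-(1+\alpha)}$ is no longer uniformly bounded. Resolving this requires exploiting the mesh symmetry jointly with the right-boundary decay of $u''$, so that the $h_{j+1}^2|u''|$ damping compensates the growth of the kernel, and then verifying that the combined estimate still fits within the stated bound in each of the three regimes of $\kappa(1+\sigma)$.
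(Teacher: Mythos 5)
Your decomposition matches the paper's: boundary interval $[x_{N-1},x_N]$ treated separately (your $J_2$ is the paper's $I_4$, and both arguments yield $h_N^{1+\sigma}\lesssim N^{-\kappa(1+\sigma)}$, absorbed via $N^{-\kappa(1+\sigma)}\le N^{\kappa(\alpha-\sigma)}i^{-\kappa(1+\alpha)}$), interior sum split at $2i$ and at $N/2$, with the near part mirroring $I_{22}$ and the left-half far part mirroring $I_{21}$. However, the step you flag at the end as "the main obstacle" is a genuine gap, not a detail: you correctly observe that for $i$ near $N/2$ and $j$ just above $N/2$ the separation $y-x_i$ shrinks to $O(N^{-1})$ so the kernel cannot be pulled out as a uniform constant, but you do not actually resolve it — and your proposed reduction of the entire right half to a reindexed sum $\sum_k k^{\kappa(1+\sigma)-3}$ "of the same form" requires exactly the uniform kernel bound $(x_j-x_i)^{-1-\alpha}\lesssim N^{-\kappa(1+\alpha)}$ that fails there. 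The paper closes this by splitting the right half once more at $\lceil 3N/4\rceil$: on $(N/2,\lceil 3N/4\rceil)$ the weight $2L-x_{j+1}$ is bounded below, so $h_{j+1}^2(2L-x_{j+1})^{\sigma-2}\lesssim N^{-2}$ uniformly, and instead of bounding the kernel interval-by-interval one sums the integrals into $\int_{x_{N/2+1}}^{x_{\lceil 3N/4\rceil}}(y-x_i)^{-1-\alpha}\,dy\lesssim (x_{N/2+1}-x_i)^{-\alpha}\le h_{N/2+1}^{-\alpha}\lesssim N^{\alpha}$, giving $N^{-2+\alpha}\le N^{\kappa(\alpha-\sigma)}i^{-\kappa(\alpha-\sigma)-2+\alpha}$; only on $[\lceil 3N/4\rceil,N-2]$, where $(j^{\kappa}-i^{\kappa})^{-1-\alpha}\lesssim N^{-\kappa(1+\alpha)}$ holds uniformly, does the symmetric reindexing to $\sum_k k^{\kappa(1+\sigma)-3}$ apply and produce the three-case bound with $\ln N$.

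A second, smaller defect: your near region runs over $i<j\le 2i$ without capping at $N/2$. For $i>N/4$ this range crosses the midpoint, where the correct second-derivative weight switches from $x_j^{\sigma-2}$ to $(2L-x_{j+1})^{\sigma-2}$, so the claim that this block is bounded "exactly as $I_{22}$" breaks down (most visibly for $i=N/2$, where $j$ would reach the right boundary). The paper avoids this by taking the near block up to $K=\min\{2i,N/2\}$ only.
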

\begin{proof}	
Using once again the zero boundary condition in \eqref{model:nonlocal}, the integral can be decomposed as
	\begin{equation}\label{err_hisR:e1}
    \begin{aligned}
		\int_{x_{i+1}}^{+\infty} \frac{\Pi_hu(y)-u(y)}{(y-x_i)^{1+\alpha}} dy
        & = \sum_{j=i+1}^{N-2} \int_{x_{j}}^{x_{j+1}} \frac{\Pi_hu(y)-u(y)}{(y-x_i)^{1+\alpha}} dy + \int_{x_{N-1}}^{x_{N}} \frac{\Pi_hu(y)-u(y)}{(y-x_i)^{1+\alpha}} dy\\
        &=: I_3+I_4.
        \end{aligned}
	\end{equation}
	
	To estimate $I_3$, we first apply the standard linear interpolation error estimate and the regularity assumption \eqref{model:regu}, and then divide the summation into four parts as follows:
	\begin{equation}\label{err_hisR:e2}
    \begin{aligned}
		|I_3| & \lesssim \sum_{j=i+1}^{N-2} h_{j+1}^2 \max_{\xi\in[x_j,x_{j+1}]}|u''(\xi)| \int_{x_j}^{x_{j+1}}(y-x_i)^{-1-\alpha} dy \\
       & \lesssim \bigg[ \sum_{j=i+1}^{K} + \sum_{j=K+1}^{N/2} \bigg]  h_{j+1}^2 x_j^{\sigma-2}\int_{x_j}^{x_{j+1}}(y-x_i)^{-1-\alpha} dy \\
       &\quad + \bigg[ \sum_{j=N/2+1}^{\lceil\frac{3N}{4}\rceil -1} + \sum_{j=\lceil\frac{3N}{4}\rceil}^{N-2}\bigg] h_{j+1}^2 (2L-x_{j+1})^{\sigma-2} \int_{x_j}^{x_{j+1}}(y-x_i)^{-1-\alpha} dy \\
       & =: I_{31} + I_{32} + I_{33} + I_{34},
        \end{aligned}
	\end{equation}
	where $K = \min\{2i,N/2\}$, and $I_{32}$ is empty if $K = N/2$. 
    
    For $I_{31}$, analogous to the estimation of $I_{22}$, by using \eqref{grad_mesh:e1}--\eqref{grad_mesh:e2} and the fact that $j^{\beta} \sim i^{\beta}$ for some $\beta=\kappa\sigma-2$ and $i+1\leq j \leq K \le 2i$, we have
	\begin{equation*}
	\begin{aligned}
		I_{31} 
		 \lesssim  N^{-\kappa\sigma} i^{\kappa\sigma-2} \int_{x_{i+1}}^{x_{K+1}}(y-x_i)^{-1-\alpha} dy   \lesssim    N^{-\kappa\sigma} i^{\kappa\sigma-2} h_{i+1}^{-\alpha}
		 \lesssim N^{\kappa(\alpha-\sigma)} i^{-\kappa(\alpha-\sigma)-2+\alpha}.
	\end{aligned}
	\end{equation*}
    
	For $I_{32}$, we pay attention to the special case $K = 2i < N/2$. Similar to \eqref{equ:er_hisl:e1}, we have 
\begin{equation}\label{err_hisR:e3}
     \int_{x_j}^{x_{j+1}}(y-x_i)^{-1-\alpha} dy \le h_{j+1} (x_j-x_i)^{-1-\alpha}.
\end{equation}
We then apply \eqref{err_hisR:e3} to $I_{32}$ and use \eqref{grad_mesh:e1}--\eqref{grad_mesh:e2} to get 
	\begin{equation*}
		 I_{32}
         \lesssim \sum_{j=2i+1}^{N/2} h_{j+1}^3 x_j^{\sigma-2} (x_j-x_i)^{-1-\alpha}
         \lesssim N^{\kappa(\sigma-\alpha)} \sum_{j=2i+1}^{N/2} j^{\kappa(1+\sigma)-3}  \big(j^\kappa-i^\kappa\big)^{-1-\alpha} .
        \end{equation*}
   Moreover, by monotonicity, similar to \eqref{err_hisL:e4c}, we have
      $
    \big(j^\kappa-i^\kappa\big)^{-1-\alpha} 
    \lesssim j^{-\kappa(1+\alpha)}$ 
    for $j \ge 2i+1$. Hence, we further obtain
    \begin{equation*}
    \begin{aligned}
     I_{32} 
    & \lesssim N^{\kappa(\alpha-\sigma)} \sum_{j=2i+1}^{N/2} j^{-\kappa(\alpha-\sigma)-3} \lesssim N^{\kappa(\alpha-\sigma)}\int_{2i}^{+\infty} x^{-\kappa(\alpha-\sigma)-3} dx \\
     &\lesssim N^{\kappa(\alpha-\sigma)} i^{-\kappa(\alpha-\sigma)-2},
     \end{aligned}
    \end{equation*}
    where the regularity condition $\sigma\leq \frac{\alpha}{2}<\alpha$ has been used.
    
For $I_{33}$, 
using \eqref{grad_mesh:e1}--\eqref{grad_mesh:e2} and the fact that $(N-j)^{\kappa\sigma-2} \sim N^{\kappa\sigma-2}$ for $N/2+1 \leq j \leq K \le \lceil\frac{3N}{4}\rceil -1$, we have
	\begin{equation*}
	\begin{aligned}
		 I_{33} & \lesssim \sum_{j=N/2+1}^{\lceil\frac{3N}{4}\rceil-1} N^{-\kappa\sigma} (N-j)^{\kappa\sigma-2} \int_{x_j}^{x_{j+1}}(y-x_i)^{-1-\alpha}dy\\
		 &\lesssim N^{-2} \int_{x_{N/2+1}}^{x_{\lceil\frac{3N}{4}\rceil}} (y-x_i)^{-1-\alpha}dy \\
    & \lesssim N^{-2} (x_{N/2+1}-x_i)^{-\alpha}
		 \lesssim N^{-2+\alpha},
	\end{aligned}
	\end{equation*}
 where, by monotonicity, $(x_{N/2+1}-x_i)^{-\alpha} \le h_{N/2+1}^{-\alpha} \lesssim N^{\alpha}$ for $i\leq N/2$ has also been applied in the last step.
    
For $I_{34}$, following the same procedure as for $I_{21}$, by applying \eqref{err_hisR:e3} and noting that $(j^\kappa-i^\kappa)^{-1-\alpha} \lesssim [\lceil \frac{3N}{4} \rceil^\kappa - (\frac{N}{2})^{\kappa}]^{-1-\alpha} \lesssim N^{-\kappa(1+\alpha)}$ for $\lceil \frac{3N}{4}\rceil \leq j\leq N-2$ and $1\leq i\leq\frac{N}{2}$, we obtain
	\begin{equation*}
	\begin{aligned}
		I_{34}
		& \lesssim \sum_{j=\lceil\frac{3N}{4}\rceil}^{N-2} h_{j+1}^3 (2L-x_{j+1})^{\sigma-2}(x_j-x_i)^{-1-\alpha} \\
        & \lesssim N^{\kappa(\alpha-\sigma)} \sum_{j=\lceil\frac{3N}{4}\rceil}^{N-2} (N-j-1)^{\kappa(1+\sigma)-3}(j^\kappa-i^\kappa)^{-1-\alpha}\\
        &
        \lesssim N^{-\kappa(1+\sigma)} \sum_{j=1}^{\lceil\frac{N}{4}\rceil} j^{\kappa(1+\sigma)-3}
	 \lesssim \left\{
	 \begin{aligned}
	  & N^{-\kappa(1+\sigma)}, & \kappa(1+\sigma)<2, \\
	    & N^{-2} \ln N, & \kappa(1+\sigma)=2, \\ 
	    & N^{-2}, & \kappa(1+\sigma)>2 .
	 \end{aligned}\right.
     \end{aligned}
	\end{equation*} 
    
Now, inserting the estimates $I_{31}$--$ I_{34}$ into \eqref{err_hisR:e2} and noting that
\begin{equation}\label{err_hisR:e4}
   \begin{aligned}
   N^{-\kappa(1+\sigma)} & = N^{\kappa(\alpha-\sigma)} N^{-\kappa(1+\alpha)} \leq N^{\kappa(\alpha-\sigma)} i^{-\kappa(1+\alpha)},\\
  N^{-2+\alpha} & = N^{\kappa(\alpha-\sigma)}N^{-\kappa(\alpha-\sigma)-2+\alpha} \leq  N^{\kappa(\alpha-\sigma)}i^{-\kappa(\alpha-\sigma)-2+\alpha},
   \end{aligned}
\end{equation} 
we get the estimate for $I_3$ that
	\begin{equation} \label{err_hisR:e5}
    \begin{aligned}
		|I_3| & \lesssim \left\{\begin{aligned}
		 & N^{\kappa(\alpha-\sigma)} i^{-\min\{\kappa(1+\alpha), \, \kappa(\alpha-\sigma)+2-\alpha\}}, & \kappa(1+\sigma)<2,\\
		 & N^{\kappa(\alpha-\sigma)} i^{-\kappa(1+\alpha)} \max\{\ln N,i^\alpha\}, & \kappa(1+\sigma)=2,\\
		 & N^{\kappa(\alpha-\sigma)} i^{-\kappa(\alpha-\sigma)-2+\alpha}, & \kappa(1+\sigma)>2.
		 \end{aligned}\right.
        \end{aligned}
	\end{equation}

Analogous to the estimate of $I_{1}$, we use the definition of $\Pi_h u(y)$, the regularity assumption \eqref{model:regu} and the estimate \eqref{err_hisR:e3} with $j=N-1$ to estimate $I_4$ that
	\begin{equation*} 
	\begin{aligned}
		|I_{4}| 
        &= \Bigg| \int_{x_{N-1}}^{x_N} (y-x_i)^{-1-\alpha} \Big[\frac{y-x_{N-1}}{h_N} \int_{y}^{x_N} u'(s)ds  -  \frac{x_N-y}{h_N} \int_{x_{N-1}}^y  u'(s)ds \Big]dy\Bigg|\\
		& \leq \int_{x_{N-1}}^{x_N} (y-x_i)^{-1-\alpha} dy\int_{x_{N-1}}^{x_N} |u'(s)| ds \lesssim \int_{x_{N-1}}^{x_N} (y-x_i)^{-1-\alpha} dy\int_{x_{N-1}}^{x_N} (2L-s)^{\sigma-1} ds\\
        & \leq h_N^{1+\sigma} (x_{N-1}-x_i)^{-1-\alpha}.
        \end{aligned}
        \end{equation*}
     Since $x_{N-1}-x_i \ge x_{N-1}-x_{N/2} 
     = L(1-(2/N)^\kappa) \gtrsim 1$ for $1\leq i\leq N/2$ and $N \ge 3$ large enough, it follows that $(x_{N-1}-x_i)^{-1-\alpha}\lesssim 1$. Thus, by \eqref{grad_mesh:e2} and \eqref{err_hisR:e4} we obtain
        \begin{equation}\label{err_hisR:e6}
		|I_4| \lesssim N^{-\kappa(1+\sigma)} \lesssim N^{\kappa(\alpha-\sigma)} i^{-\kappa(1+\alpha)}.
	\end{equation}
	
Therefore, by collecting the bounds in \eqref{err_hisR:e5} for $I_{3}$ and \eqref{err_hisR:e6} for $I_4$ into \eqref{err_hisR:e1}, the proof is complete.
\end{proof}

Finally, we give an estimate for the local integral error of the interpolation.
\begin{lemma}\label{lem:err_loc}
	Assume that \eqref{model:regu} holds. Then, for $1 \leq i \leq N/2$, we have
	\begin{equation*}
		\Big| \int_{x_{i-1}}^{x_{i+1}} \frac{\Pi_hu(y)-u(y)}{|x_i-y|^{1+\alpha}} dy\Big|  \lesssim   N^{\kappa(\alpha-\sigma)} i^{-\kappa(\alpha-\sigma)-2+\alpha}.
	\end{equation*}	
\end{lemma}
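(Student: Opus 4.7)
The plan is to split the integral at the singular point $y=x_i$, writing
$$\int_{x_{i-1}}^{x_{i+1}}\frac{\Pi_hu(y)-u(y)}{|x_i-y|^{1+\alpha}}\,dy \;=\; J_1+J_2,$$
where $J_1:=\int_{x_{i-1}}^{x_i}(\Pi_hu(y)-u(y))(x_i-y)^{-1-\alpha}\,dy$ and $J_2$ is the analogous integral over $[x_i,x_{i+1}]$, and to handle each piece on a single subinterval where $\Pi_hu$ is linear. The natural tool is the standard piecewise-linear interpolation identity $\Pi_hu(y)-u(y)=-\tfrac12 u''(\xi_y)(y-x_{j-1})(x_j-y)$ on $[x_{j-1},x_j]$, combined with the regularity bound $|u''(\xi)|\lesssim \xi^{\sigma-2}$ inherited from \eqref{model:regu}, since $(b-\xi)^{\sigma-2}\lesssim 1$ on the left half $i\le N/2$.

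For the generic case $i\ge 2$, both subintervals are bounded away from the boundary singularity at $0$, so $|u''|$ is uniformly controlled on $[x_{i-1},x_i]$ by $x_{i-1}^{\sigma-2}$ and on $[x_i,x_{i+1}]$ by $x_i^{\sigma-2}$ (using $\sigma-2<0$). Plugging the interpolation identity into $J_1$ and $J_2$, pulling the maximum of $|u''|$ out, and evaluating the remaining integral via the substitution $t=x_i-y$, using
$$\int_0^{h_i}(h_i-t)\,t^{-\alpha}\,dt \;=\; \frac{h_i^{2-\alpha}}{(1-\alpha)(2-\alpha)},$$
I expect the bound $|J_1|+|J_2|\lesssim x_{i-1}^{\sigma-2}h_i^{2-\alpha}+x_i^{\sigma-2}h_{i+1}^{2-\alpha}$. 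Substituting the graded-mesh estimates $x_{i-1}\sim x_i\sim N^{-\kappa}i^\kappa$ and $h_i,h_{i+1}\lesssim N^{-\kappa}i^{\kappa-1}$ from \eqref{grad_mesh:e1}--\eqref{grad_mesh:e2}, the exponent of $N$ collapses to $\kappa(\alpha-\sigma)$ and the exponent of $i$ to $\kappa(\sigma-2)+(\kappa-1)(2-\alpha)=-\kappa(\alpha-\sigma)-2+\alpha$, matching the claim.

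The main obstacle is the boundary case $i=1$, where the left subinterval is $[0,x_1]$ and the pointwise bound $|u''(\xi)|\lesssim \xi^{\sigma-2}$ is no longer integrable against the Lagrange remainder, so the interpolation identity cannot be applied directly. For $J_2$ the argument above still goes through since $|u''|\lesssim x_1^{\sigma-2}$ on $[x_1,x_2]$, yielding $|J_2|\lesssim N^{\kappa(\alpha-\sigma)}$. The plan for $J_1$ is to use an integration by parts that trades $|u''|$ for $|u'|$. With $v(y):=\Pi_hu(y)-u(y)=(y/x_1)u(x_1)-u(y)$, one has $v(0)=v(x_1)=0$; moreover, a first-order Taylor expansion at $x_1$ gives $v(y)=O(x_1-y)$ as $y\uparrow x_1$, so $v(y)(x_1-y)^{-\alpha}\to 0$ there (using $\alpha<1$), and both boundary contributions vanish, leaving
$$J_1 \;=\; -\frac{1}{\alpha}\int_0^{x_1}v'(y)(x_1-y)^{-\alpha}\,dy, \qquad v'(y)=\frac{u(x_1)}{x_1}-u'(y).$$
Since $\sigma\le\alpha/2<1$, the bounds $|u(x_1)/x_1|\lesssim x_1^{\sigma-1}\le y^{\sigma-1}$ and $|u'(y)|\lesssim y^{\sigma-1}$ give $|v'(y)|\lesssim y^{\sigma-1}$, and the remaining integral reduces to the Beta integral $\int_0^{x_1}y^{\sigma-1}(x_1-y)^{-\alpha}\,dy=x_1^{\sigma-\alpha}B(\sigma,1-\alpha)$, finite because $\sigma>0$ and $\alpha<1$. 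With $x_1\sim N^{-\kappa}$, this yields $|J_1|\lesssim N^{\kappa(\alpha-\sigma)}$, matching the claimed bound at $i=1$, and combining with the $i\ge 2$ case completes the proof.
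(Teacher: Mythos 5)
Your proof is correct and follows essentially the same strategy as the paper: split the integral at the singular point $y=x_i$, reduce each piece to $h^{2-\alpha}\max|u''|$ via a one-element interpolation bound, and then insert the graded-mesh estimates \eqref{grad_mesh:e2} and the regularity assumption \eqref{model:regu}. The paper reaches the intermediate bound $\frac{h_i^{2-\alpha}}{1-\alpha}\max_{[x_{i-1},x_i]}|u''|+\frac{h_{i+1}^{2-\alpha}}{1-\alpha}\max_{[x_i,x_{i+1}]}|u''|$ by integrating by parts and using $|(\Pi_hu-u)'|\lesssim h\max|u''|$, whereas you invoke the Lagrange remainder directly and evaluate $\int_0^{h_i}(h_i-t)t^{-\alpha}\,dt$; the two routes are interchangeable and give the same exponent arithmetic. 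The one substantive difference is your separate treatment of $i=1$: the paper's displayed estimate $h_i^{2-\alpha}x_{i-1}^{\sigma-2}$ is asserted for $1\le i\le N/2-1$ but degenerates at $i=1$ because $x_0=0$ and $\sigma-2<0$, so an additional argument is genuinely needed there. Your integration by parts trading $u''$ for $u'$, the bound $|v'(y)|\lesssim y^{\sigma-1}$, and the Beta integral $\int_0^{x_1}y^{\sigma-1}(x_1-y)^{-\alpha}\,dy=x_1^{\sigma-\alpha}B(\sigma,1-\alpha)$ supply exactly that, mirroring how the paper itself handles the term $I_1$ with $i=1$ in Lemma \ref{lem:err_hisL}; the resulting bound $x_1^{\sigma-\alpha}\lesssim N^{\kappa(\alpha-\sigma)}$ matches the claim at $i=1$. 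Your brief justification that $(b-\xi)^{\sigma-2}\lesssim 1$ on the intervals involved (including $[x_{N/2},x_{N/2+1}]$, where $2L-\xi\ge x_{N/2-1}\gtrsim 1$) covers the $i=N/2$ case that the paper treats separately by symmetry. In short, your argument is complete and, on the $i=1$ point, more careful than the paper's own write-up.
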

\begin{proof} We begin by decomposing the local integral into two parts and performing integration by parts on each subinterval. By applying the standard interpolation error estimate to the resulting terms, we arrive at 
\begin{equation}\label{err_loc:e1}
\begin{aligned}
& \quad \Big| \int_{x_{i-1}}^{x_{i+1}} \frac{\Pi_hu(y)-u(y)}{|x_i-y|^{1+\alpha}} dy\Big|   \leq   \Big| \int_{x_{i-1}}^{x_{i}} \frac{\Pi_hu(y)-u(y)}{(x_i-y)^{1+\alpha}} dy\Big| + \Big| \int_{x_{i}}^{x_{i+1}} \frac{\Pi_hu(y)-u(y)}{(y-x_i)^{1+\alpha}} dy\Big|\\
& = \frac{1}{\alpha} \Big| \int_{x_{i-1}}^{x_{i}} (\Pi_hu(y)-u(y))' (x_i-y)^{-\alpha} dy\Big|  +  \frac{1}{\alpha} \Big| \int_{x_{i}}^{x_{i+1}} (\Pi_hu(y)-u(y))' (x_i-y)^{-\alpha} dy\Big|\\
& \lesssim h_i \max_{\xi\in[x_{i-1},x_i]}|u''(\xi)| \int_{x_{i-1}}^{x_{i}} (x_i-y)^{-\alpha}dy  + h_{i+1} \max_{\xi\in[x_i,x_{i+1}]}|u''(\xi)| \int_{x_i}^{x_{i+1}} (y-x_i)^{-\alpha} dy\\
& = \frac{h_i^{2-\alpha}}{1-\alpha}\max_{\xi\in[x_{i-1},x_i]}|u''(\xi)| + \frac{h_{i+1}^{2-\alpha}}{1-\alpha}\max_{\xi\in[x_i,x_{i+1}]}|u''(\xi)|.
\end{aligned}
\end{equation}

Using the regularity assumption \eqref{model:regu} and the mesh relations \eqref{grad_mesh:e1}--\eqref{grad_mesh:e2}, for $1 \leq i \leq N/2-1$, we obtain from \eqref{err_loc:e1} that
 \begin{equation*}
  \Big| \int_{x_{i-1}}^{x_{i+1}} \frac{\Pi_hu(y)-u(y)}{|x_i-y|^{1+\alpha}} dy\Big|    \lesssim h_i^{2-\alpha} x_{i-1}^{\sigma-2} + h_{i+1}^{2-\alpha} x_{i}^{\sigma-2}   \lesssim N^{\kappa(\alpha-\sigma)} i^{-\kappa(\alpha-\sigma)-2+\alpha},
\end{equation*}
which proves the conclusion.
In particular, for $ i= N/2$, now the estimate for the last term in \eqref{err_loc:e1} is changed to $\max_{\xi\in[x_i,x_{i+1}]}|u''(\xi)| \lesssim (2L-x_{N/2+1})^{\sigma-2}$, and by symmetric, $2L-x_{N/2+1}=x_{N/2-1}$. Thus, we have
 \begin{equation*}
\begin{aligned}
 & \quad \Big| \int_{x_{N/2-1}}^{x_{N/2+1}} \frac{\Pi_hu(y)-u(y)}{|x_{N/2}-y|^{1+\alpha}} dy\Big|   
 \lesssim h_{N/2}^{2-\alpha} x_{N/2-1}^{\sigma-2} + h_{N/2+1}^{2-\alpha} x_{N/2-1}^{\sigma-2} 
 \lesssim N^{-2+\alpha},
\end{aligned}
\end{equation*}
which combines with \eqref{err_hisR:e4} also proves the conclusion.
\end{proof}

Based on Lemmas~\ref{lem:err_hisL}--\ref{lem:err_loc}, we now establish the following truncation error estimate
for the fast collocation scheme \eqref{sch:fast_Collocation}.
\begin{lemma}\label{lem:loc-tru-err} 	Assume that \eqref{model:regu} holds. Then the truncation error of the fast collocation scheme \eqref{sch:fast_Collocation} satisfies
	\begin{equation*}
    \begin{aligned}
		|R_i| & = |(-\Delta)^{\frac{\alpha}{2}}u(x_i) - \mL_h[u]_i|\\
        & \lesssim \left\{\begin{aligned}
		 & N^{\kappa(\alpha-\sigma)} i^{-\min\{\kappa(1+\alpha), \, \kappa(\alpha-\sigma)+2-\alpha\}} + \epsilon, & \kappa(1+\sigma)<2,\\
		 & N^{\kappa(\alpha-\sigma)} i^{-\kappa(1+\alpha)} \max\{\ln N,i^\alpha\} + \epsilon, & \kappa(1+\sigma)=2,\\
		 & N^{\kappa(\alpha-\sigma)} i^{-\kappa(\alpha-\sigma)-2+\alpha} + \epsilon, & \kappa(1+\sigma)>2,
		 \end{aligned}\right.
        \end{aligned}
	\end{equation*}
    for $1 \leq i \leq N/2$, and similarly
    \begin{equation*}
		|R_i|  \lesssim \left\{\begin{aligned}
		 & N^{\kappa(\alpha-\sigma)} (N-i)^{-\min\{\kappa(1+\alpha), \, \kappa(\alpha-\sigma)+2-\alpha\}} + \epsilon, & \kappa(1+\sigma)<2,\\
		 & N^{\kappa(\alpha-\sigma)} (N-i)^{-\kappa(1+\alpha)} \max\{\ln N,(N-i)^\alpha\} + \epsilon, & \kappa(1+\sigma)=2,\\
		 & N^{\kappa(\alpha-\sigma)} (N-i)^{-\kappa(\alpha-\sigma)-2+\alpha} + \epsilon, & \kappa(1+\sigma)>2,
		 \end{aligned}\right.
	\end{equation*}
    for $N/2+1 \leq i\leq N-1$.
\end{lemma}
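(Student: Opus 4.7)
The plan is to estimate $R_i$ by decomposing it according to the three-part splitting \eqref{Lap_approx:e2} and matching each piece against the corresponding component in $\mL_h[u]_i$. Writing
\[
R_i = C_\alpha\bigl(R_{l,i}+R_{loc,i}+R_{r,i}\bigr),
\]
where $R_{l,i}$, $R_{loc,i}$, $R_{r,i}$ are the errors incurred when approximating $\mL_l[u]_i$, $\mL_{loc}[u]_i$, $\mL_r[u]_i$ respectively. The local error is easy: since the surrogate formula \eqref{Local_approx:e1}--\eqref{Lap_approx:e6} uses $\Pi_h u$ in place of $u$, one has
\[
R_{loc,i}=\int_{x_{i-1}}^{x_{i+1}}\frac{u(y)-\Pi_h u(y)}{|x_i-y|^{1+\alpha}}dy,
\]
and Lemma \ref{lem:err_loc} provides the bound $\lesssim N^{\kappa(\alpha-\sigma)}i^{-\kappa(\alpha-\sigma)-2+\alpha}$, which is absorbed into every case of the claimed estimate.

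For the left nonlocal error with $\alpha\in(0,1)$, I would add and subtract the kernel $(x_i-y)^{-1-\alpha}$ together with $\Pi_h u$ and split
\[
R_{l,i}=\int_{-\infty}^{x_{i-1}}\frac{u(y)-\Pi_h u(y)}{(x_i-y)^{1+\alpha}}dy
+\int_{a}^{x_{i-1}}\Pi_h u(y)\Bigl[(x_i-y)^{-1-\alpha}-\sum_{s=1}^{N_e}\theta_s e^{-\lambda_s(x_i-y)}\Bigr]dy,
\]
where the domain is truncated to $[a,x_{i-1}]$ thanks to the zero extension of $u$. The first integral is controlled exactly by Lemma \ref{lem:err_hisL}. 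For the second, I pick the SOE parameters with $\Delta x\le h_1$ and $X\ge b-a$, so that \eqref{equ:soe} applies pointwise on $[h_i,x_i-a]$; since $\|\Pi_h u\|_{L^\infty}\lesssim 1$ by \eqref{model:regu} and the interval length is at most $b-a$, the SOE term contributes at most $\epsilon\,\|\Pi_h u\|_{L^\infty}(b-a)\lesssim\epsilon$. An identical argument, using Lemma \ref{lem:err_hisR} together with \eqref{equ:soe} on $[x_{i+1},b]$, bounds $R_{r,i}$. Summing the three contributions yields the claimed estimate for $1\le i\le N/2$.

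For the range $N/2+1\le i\le N-1$ I would invoke the symmetry of the graded grid \eqref{grad_mesh:e1} about $x=L$: under the reflection $x\mapsto 2L-x$, the mesh maps onto itself with node $x_i\leftrightarrow x_{N-i}$, the roles of left and right nonlocal integrals interchange, and the local integral is symmetric. Consequently the exact same argument—with $i$ replaced by $N-i$ in Lemmas \ref{lem:err_hisL}, \ref{lem:err_hisR} and \ref{lem:err_loc}—produces the second bound involving $N-i$ in place of $i$.

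The main obstacle I anticipate is not the decomposition, which is structural, but bookkeeping the regime-dependent dominant terms: for each of the three cases $\kappa(1+\sigma)<2$, $=2$, $>2$ I must check that the bound from Lemma \ref{lem:err_loc}, namely $N^{\kappa(\alpha-\sigma)}i^{-\kappa(\alpha-\sigma)-2+\alpha}$, is dominated by (or merges cleanly with) the dominant term from Lemmas \ref{lem:err_hisL}--\ref{lem:err_hisR}. In particular, one needs $-\kappa(\alpha-\sigma)-2+\alpha\le -\kappa(1+\alpha)$ precisely when $\kappa(1+\sigma)\ge 2$, and the opposite inequality otherwise, which is exactly how the minima/maxima appearing in the statement are produced. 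A secondary delicate point is the choice of $\Delta x$ and $X$ in the SOE calibration: taking $\Delta x=h_1\sim N^{-\kappa}$ makes $N_e=\mathcal{O}(\log^2 N)$ via Lemma \ref{lem:soe}, so the $\epsilon$ contribution is genuinely independent of $i$, though $N_e$ depends mildly on $N$; this does not affect the pointwise truncation bound.
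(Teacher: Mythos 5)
Your decomposition is essentially the paper's own proof, merely regrouped: the paper writes $|R_i|\le I_5+I_6$ with $I_5$ the interpolation (direct-collocation) error handled by Lemmas \ref{lem:err_hisL}--\ref{lem:err_loc} and $I_6$ the SOE kernel-replacement error bounded by $\epsilon\max_{\xi\in[a,b]}|u(\xi)|$, which are exactly your region-by-region pieces reassembled, and the range $N/2+1\le i\le N-1$ is likewise dispatched by the symmetry of the graded grid. One side remark of yours is off --- the inequality $-\kappa(\alpha-\sigma)-2+\alpha\le-\kappa(1+\alpha)$ holds precisely when $\kappa(1+\sigma)\le 2-\alpha$, not when $\kappa(1+\sigma)\ge 2$ --- but this is harmless, since the Lemma \ref{lem:err_loc} contribution $N^{\kappa(\alpha-\sigma)}i^{-\kappa(\alpha-\sigma)-2+\alpha}$ is dominated by the stated bound in every case simply because $\min\{a,b\}\le b$.
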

\begin{proof} We split the local truncation error into two parts
	\begin{equation}\label{equ:loc_truc1}
		\begin{aligned}
		|R_i|&\leq |(-\Delta)^{\frac{\alpha}{2}}u(x_i)-\mL[u]_i|+|\mL[u]_i-\mL_hu(x_i)|=: I_5+ I_6,
		\end{aligned}
	\end{equation}
    i.e., error of direct collocation approximation to the model and error of the fast collocation scheme \eqref{sch:fast_Collocation} to the direct collocation scheme \eqref{sch:direct_Collocation}.
    
For the first term, Lemmas \ref{lem:err_hisL}--\ref{lem:err_loc} imply the direct collocation approximation error
    \begin{equation}\label{equ:loc1}
    |I_5| 
     \lesssim \left\{\begin{aligned}
		 & N^{\kappa(\alpha-\sigma)} i^{-\min\{\kappa(1+\alpha), \, \kappa(\alpha-\sigma)+2-\alpha\}}, & \kappa(1+\sigma)<2,\\
		 & N^{\kappa(\alpha-\sigma)} i^{-\kappa(1+\alpha)} \max\{\ln N,i^\alpha\}, & \kappa(1+\sigma)=2,\\
		 & N^{\kappa(\alpha-\sigma)} i^{-\kappa(\alpha-\sigma)-2+\alpha}, & \kappa(1+\sigma)>2,
		 \end{aligned}\right.
    \end{equation}
    for $1\leq i\leq N/2$.
Since the symmetric graded grid \eqref{grad_mesh:e1} is considered, a truncation error estimate for $N/2+1 \leq i \leq N-1$ can be similarly proved, that is,
    \begin{equation}\label{equ:loc2}
    |I_5|
    \lesssim \left\{\begin{aligned}
		 & N^{\kappa(\alpha-\sigma)} (N-i)^{-\min\{\kappa(1+\alpha), \, \kappa(\alpha-\sigma)+2-\alpha\}}, & \kappa(1+\sigma)<2,\\
		 & N^{\kappa(\alpha-\sigma)} (N-i)^{-\kappa(1+\alpha)} \max\{\ln N,(N-i)^\alpha\}, & \kappa(1+\sigma)=2,\\
		 & N^{\kappa(\alpha-\sigma)} (N-i)^{-\kappa(\alpha-\sigma)-2+\alpha}, & \kappa(1+\sigma)>2.
		 \end{aligned}\right.
    \end{equation}
	
For the second term, as both schemes have the same local integration approximation, it follows from Lemma \ref{lem:soe} that
	\begin{equation}\label{equ:loc3}
		\begin{aligned}
		|I_6|
        & \leq \Bigg| \int_{-\infty}^{x_{i-1}} \Pi_hu(y) \Big((x_i-y)^{-1-\alpha} - \sum_{s=1}^{N_{e}} \theta_s e^{-\lambda_s(x_i-y)} \Big)dy\\
		& \quad +\int_{x_{i+1}}^{+\infty} \Pi_hu(y) \Big((y-x_i)^{-1-\alpha} -\sum_{s=1}^{N_{e}} \theta_s e^{-\lambda_s(y-x_i)} \Big)dy\Bigg|\\
		& \leq \epsilon \Bigg[\int_{-\infty}^{x_{i-1}}|\Pi_hu(y)|dy + \int_{x_{i+1}}^{+\infty}|\Pi_hu(y)|dy\Bigg]
        \le \epsilon \max_{\xi\in[a, b]}|u(\xi)|.
		\end{aligned}
	\end{equation}
	
Thus, substituting the bounds in \eqref{equ:loc1}--\eqref{equ:loc3} into \eqref{equ:loc_truc1} gives the claimed truncation error estimate.
\end{proof}
\begin{theorem}\label{thm:conv}
Suppose that $u(x)\in C^2(0, 2L)$ and assumption \eqref{model:regu} holds. Let $\{U_i\}$ denote the numerical solution of the fast collocation scheme \eqref{sch:fast_Collocation}. Then, for sufficiently small $\epsilon$, we have
\begin{equation*}
\max_{1\leq i\leq N-1} |u(x_i)-U_i| \lesssim \left\{\begin{aligned}
& N^{-\kappa\sigma} + \epsilon, & \kappa(1+\sigma) < 2,\\
& N^{-\kappa \sigma}\ln N + \epsilon, & \kappa(1+\sigma) = 2,\\
& N^{-\min\{\kappa\sigma,2-\alpha\}}+ \epsilon,  & \kappa(1+\sigma) > 2.
\end{aligned}\right.
\end{equation*}
\end{theorem}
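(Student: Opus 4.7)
The plan is to translate the pointwise truncation-error bounds of Lemma~\ref{lem:loc-tru-err} into pointwise solution-error estimates through a discrete comparison argument for the M-matrix $\bm{A}$. First, set $e_i := u(x_i) - U_i$ with $e_0 = e_N = 0$; since $\mL_h[u]_i = f_i - R_i$ and $\mL_h[U]_i = f_i$, subtracting yields the error equation $\bm{A}\bm{e} = -\bm{R}$. By Theorem~\ref{thm:dia-dom-mat}, for sufficiently small $\epsilon$ the matrix $\bm{A}$ has positive diagonal, non-positive off-diagonal entries and is strictly diagonally dominant, so it is a nonsingular M-matrix with $\bm{A}^{-1} \ge 0$ entrywise. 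Consequently, establishing $|e_i| \le w_i$ reduces to constructing a componentwise-nonnegative barrier $\{w_i\}$ with $(\bm{A}w)_i \ge |R_i|$ for every interior index $i$.

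Next, I would build the barrier as an additive decomposition $w_i = \eta\,\psi_i + C_2\epsilon$. The constant piece $C_2\epsilon$ absorbs the uniform $\epsilon$-term in Lemma~\ref{lem:loc-tru-err} via the row-sum lower bound $a_{ii} - \sum_{j\ne i}|a_{ij}| \ge \Xi_i - (b-a)\epsilon \gtrsim 1$ that was already established inside the proof of Theorem~\ref{thm:dia-dom-mat}. The leading piece $\eta\,\psi_i$ with $\psi_i := [(x_i-a)(b-x_i)]^{\alpha/2}$ is motivated by the classical identity $(-\Delta)^{\alpha/2}\psi \equiv C_\star > 0$ on $(a,b)$: its discrete analogue $(\bm{A}\psi)_i \gtrsim 1$ would be verified by rerunning the SOE-split and local-integral decomposition that defines $\bm{A}$ on the smooth boundary profile $\psi$, over the symmetric graded mesh \eqref{grad_mesh:e1}, exploiting the symmetry $x_i \leftrightarrow 2L - x_{N-i}$ so that only the half $1 \le i \le N/2$ needs to be analyzed, in parallel with Lemmas~\ref{lem:err_hisL}--\ref{lem:err_loc}.

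Once $(\bm{A}\psi)_i \gtrsim 1$ is in hand, the proof reduces to choosing $\eta$ minimally so that $\eta\,(\bm{A}\psi)_i$ dominates $|R_i|$ pointwise. A direct case analysis on the three regimes of Lemma~\ref{lem:loc-tru-err} then yields $\eta = N^{-\kappa\sigma}$, $N^{-\kappa\sigma}\ln N$ and $N^{-\min\{\kappa\sigma,\,2-\alpha\}}$ respectively; the $2-\alpha$ saturation in the aggressive-grading regime $\kappa(1+\sigma) > 2$ arises from the near-boundary factor $i^{-\kappa(\alpha-\sigma)-2+\alpha}$ in $R_i$, which balances the boundary decay of $\psi_i$ only at the cost of a further $N^{-(2-\alpha)}$ loss. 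Taking maximum over $i$ and using $\max_i \psi_i \lesssim 1$ produces the stated bound. The \textbf{main obstacle} I anticipate is precisely the uniform lower bound $(\bm{A}\psi)_i \gtrsim 1$: one must transfer the continuous fractional-Laplacian identity to the SOE-based discretization on nonuniform grids while simultaneously controlling the $O(\epsilon)$ SOE perturbation and keeping the estimate uniform at the near-boundary nodes, where both $\psi_i$ and the individual terms constituting $\bm{A}$ become small; this delicate balance is exactly what dictates the three-case distinction in the theorem.
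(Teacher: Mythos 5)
Your overall framework (error equation, M-matrix/comparison structure of $\bm{A}$ from Theorem~\ref{thm:dia-dom-mat}) is sound and is indeed the foundation the paper builds on, but the barrier you propose cannot deliver the stated rates, and the step you flag as the ``main obstacle'' is not merely delicate --- it is both unestablished and, even if granted, insufficient. The difficulty is quantitative: by Lemma~\ref{lem:loc-tru-err} the truncation error is \emph{not} uniformly small; at the first interior node it is only bounded by $N^{\kappa(\alpha-\sigma)}$, which diverges since $\sigma\le\alpha/2<\alpha$. If all you know about your barrier is the uniform bound $(\bm{A}\psi)_i\gtrsim 1$, then ``choosing $\eta$ minimally so that $\eta(\bm{A}\psi)_i$ dominates $|R_i|$ pointwise'' forces $\eta\gtrsim\max_i|R_i|\sim N^{\kappa(\alpha-\sigma)}$, not $\eta=N^{-\kappa\sigma}$; the resulting bound is vacuous. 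What is actually needed is a lower bound for $(\bm{A}w)_i$ that itself \emph{blows up near the boundary} at a rate matching the blow-up of $|R_i|$. Moreover, the claim $(\bm{A}\psi)_i\gtrsim 1$ for $\psi_i=[(x_i-a)(b-x_i)]^{\alpha/2}$ is doubtful precisely at the near-boundary nodes: $(\bm{A}\psi)_i$ equals the constant $(-\Delta)^{\alpha/2}\psi$ minus the truncation error of the scheme applied to $\psi$, and that truncation error (for a function with regularity exponent $\sigma=\alpha/2$) is only controlled by $N^{\kappa\alpha/2}$ at $i=1$ with no sign information; a crude bound such as $(\bm{A}\psi)_1\ge a_{11}\psi_1-\max_j\psi_j\sum_{j\ne 1}|a_{1j}|\sim h_1^{-\alpha/2}-Ch_1^{-\alpha}$ is negative for small $h_1$, so positivity would require a genuinely new sign analysis that you do not supply.

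The paper avoids this entirely by using, in effect, the \emph{constant} barrier: evaluating the error equation at the maximizing index $i_0$ and bounding $\max_i|e_i|\le R_{i_0}/\big(\sum_j a_{i_0,j}\big)$, where the row sum satisfies $\sum_j a_{i_0,j}\ge\Xi_{i_0}-2L\epsilon$ with $\Xi_{i_0}\ge\frac{1}{\alpha}\big[x_{i_0}^{-\alpha}+(2L-x_{i_0})^{-\alpha}\big]\gtrsim N^{\kappa\alpha}i_0^{-\kappa\alpha}$. This index-dependent lower bound diverges near the boundary at exactly the rate needed to cancel the factor $N^{\kappa(\alpha-\sigma)}i_0^{-(\cdots)}$ in $R_{i_0}$, producing the intermediate estimate $N^{-\kappa\sigma}i_0^{-\min\{\kappa,\,2-\alpha-\kappa\sigma\}}$ (and its analogues), after which the three-case discussion is elementary. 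That row-sum estimate is the single quantitative ingredient your proposal is missing; your attribution of the $2-\alpha$ saturation to ``the boundary decay of $\psi_i$'' is also off --- in the correct argument it comes from balancing $i_0^{\kappa\sigma-2+\alpha}$ against $i_0\le N/2$ when $\kappa\sigma\ge 2-\alpha$, not from $\max_i\psi_i$.
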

\begin{proof}
For $i=0,1,\dots,N$, set $e_i:=u(x_i)-U_i$ with $e_0=e_N=0$. Then the error equation reads
\begin{equation*}
\sum_{j=1}^{N-1} a_{i, j} e_j = R_i, \quad i=1,2,\dots, N-1.
\end{equation*}

Let $i_0$ be the index such that $|e_{i_0}| = \max_{1\leq i\leq N-1} |e_i|$. Recall from Theorem \ref{thm:dia-dom-mat} that $a_{i_0,i_0}>0$ and $a_{i_0,j}<0$ for all $j \neq i_0$. Hence, we have
\begin{equation*}
\begin{aligned}
|R_{i_0}| & = \Bigg| a_{i_0,i_0} e_{i_0} + \sum_{j=1,j\neq i_0}^{N-1} a_{i_0,j} e_j \Bigg|  \geq  a_{i_0,i_0} |e_{i_0}| - \sum_{j=1,j\neq i_0}^{N-1} |a_{i_0,j}| |e_j|\\
&\geq \left( a_{i_0,i_0}   + \sum_{j=1,j\neq i_0}^{N-1} a_{i_0,j} \right) |e_{i_0}| =  |e_{i_0}|\sum_{j=1}^{N-1} a_{i_0,j},
\end{aligned}
\end{equation*}
which, by $\eqref{matrix-form:a5}$, further implies that
\begin{equation}\label{equ:cov2}
 \max_{1\leq i\leq N-1} |e_i| = |e_{i_0}| \leq R_{i_0}/\sum_{j=1}^{N-1} a_{i_0,j} \leq R_{i_0} / \big( \Xi_{i_0} -2L\epsilon \big).
\end{equation}

Without loss of generality, we assume that $1 \leq i_0 \le N/2$, and the discussion for the case $i_0 > N/2$ is analogous. By the mean value theorem and \eqref{grad_mesh:e1}, we have
\begin{equation*}
\Xi_{i_0}-2L \epsilon   \geq  \frac{x_{i_0}^{-\alpha}+(2L-x_{i_0})^{-\alpha}}{\alpha} - 2L \epsilon 
      \ge \frac{L^{-\alpha}}{\alpha \cdot 2^{\kappa \alpha}}N^{\kappa \alpha} i_0^{-\kappa \alpha}-2L\epsilon \ge \frac{L^{-\alpha}}{\alpha \cdot 2^{\kappa \alpha+1}} N^{\kappa \alpha} i_0^{-\kappa \alpha},
\end{equation*}
if we choose $\epsilon \leq \frac{1}{4\alpha L^{1+\alpha}} \leq \frac{1}{4\alpha  L^{1+\alpha}2^{\kappa\alpha}}  N^{\kappa\alpha}i_0^{-\kappa\alpha}$  small enough.
Thus, substituting this estimate into \eqref{equ:cov2} and applying Lemma \ref{lem:loc-tru-err}, we obtain
\begin{equation}\label{equ:cov1}
\begin{aligned}
\max_{1\leq i\leq N-1} |e_i| & \lesssim \left\{\begin{aligned}
		 & N^{-\kappa\sigma} i_0^{-\min\{\kappa, \, -\kappa\sigma+2-\alpha\}} + \epsilon, & \kappa(1+\sigma)<2,\\
		 & N^{-\kappa\sigma} i_0^{-\kappa} \max\{\ln N,i_0^\alpha\} + \epsilon, & \kappa(1+\sigma)=2,\\
		 & N^{-\kappa\sigma} i_0^{\kappa\sigma-2+\alpha} + \epsilon, & \kappa(1+\sigma)>2.
		 \end{aligned}\right.
\end{aligned}
\end{equation}

We next give a detailed discussion on \eqref{equ:cov1}. 
\begin{itemize}
\item Case i. $\kappa(1+\sigma)<2$. Note that $\min \{\kappa, -\kappa\sigma+2-\alpha\}\ge  \min \{\kappa, \kappa-\alpha\} >0$ as $\alpha\in(0,1)$ and $\kappa \geq 1$. Then, from \eqref{equ:cov1} we have
\begin{equation*}
\max_{1\leq i\leq N-1} |e_i| \lesssim N^{-\kappa\sigma} + \epsilon.
\end{equation*}

\item Case ii. $\kappa(1+\sigma)=2$. As $\kappa>\alpha>0$, one has $i_0^{-\kappa}\le 1$ and $i_0^{\alpha-\kappa}\le 1$. Then, from \eqref{equ:cov1} we have
\begin{equation*}
\max_{1\leq i\leq N-1} |e_i| \lesssim N^{-\kappa\sigma}\ln N + \epsilon.
\end{equation*}

\item Case iii.  $\kappa(1+\sigma)>2$. From \eqref{equ:cov1}, we have
\begin{equation*}
\begin{aligned}
\max_{1\leq i\leq N-1} |e_i| & \lesssim   \left\{\begin{aligned}
&N^{-\kappa\sigma} i_0^{\kappa\sigma-2+\alpha}+ \epsilon \lesssim N^{-\kappa\sigma} + \epsilon, & \kappa\sigma < 2-\alpha,\\
&N^{-\kappa\sigma}N^{\kappa\sigma-2+\alpha}+ \epsilon =N^{-(2-\alpha)}+ \epsilon ,  & \kappa\sigma \geq 2-\alpha.
\end{aligned}\right.
\end{aligned}
\end{equation*}
\end{itemize}
This completes the proof. 
\end{proof}

\begin{remark}
When $\kappa(1+\sigma)>2$ and simultaneously $\kappa\sigma \ge 2-\alpha$, that is,
$
\kappa \ge \frac{2-\alpha}{\sigma},
$
the error estimate in Theorem \ref{thm:conv} attains the optimal convergence order $2-\alpha$, i.e.,
\begin{equation*}
\max_{1\leq i \leq N-1}|u(x_i)-u_i| \lesssim N^{-(2-\alpha)}+\epsilon.
\end{equation*}
\end{remark}

For the modified fast collocation scheme \eqref{sch:fast_collocation_md}, the results of Lemmas~\ref{lem:err_hisL}--\ref{lem:err_hisR} remain valid. However, the local error bound given in Lemma~\ref{lem:err_loc} is modified as follows.
\begin{lemma}\label{lem:err_loc_md}
	Assume that \eqref{model:regu} holds. Then, for $1 \leq i \leq N/2$, we have
	\begin{equation*}
		\Big| \int_{x_{i-1}}^{x_{i+1}} \frac{u(x_i)+u'(x_i)(y-x_i)-u(y)}{|x_i-y|^{1+\alpha}} dy\Big|  \lesssim   N^{\kappa(\alpha-\sigma)} i^{-\kappa(\alpha-\sigma)-2+\alpha}.
	\end{equation*}	
\end{lemma}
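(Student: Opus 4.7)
The plan is to adapt the argument of Lemma \ref{lem:err_loc} by exploiting that the numerator is now the \emph{second-order} Taylor remainder of $u$ expanded at $x_i$, which gains one extra power of $|y-x_i|$ compared with the piecewise-linear interpolation remainder handled there. I would start from the integral Taylor formula
\[
u(x_i) + u'(x_i)(y-x_i) - u(y) = -\int_{x_i}^{y}(y-s)\,u''(s)\,ds,
\]
which yields the pointwise bound
\[
|u(x_i) + u'(x_i)(y-x_i) - u(y)| \lesssim (y-x_i)^2 \sup_{s \in I(y,x_i)} |u''(s)|,
\]
with $I(y,x_i)$ the closed interval joining $y$ and $x_i$. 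After dividing by $|y-x_i|^{1+\alpha}$, the integrand is controlled by the product of $\sup |u''|$ and the locally integrable factor $|y-x_i|^{1-\alpha}$.

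For the generic range $2 \le i \le N/2$, the whole subinterval $[x_{i-1}, x_{i+1}]$ is bounded away from $a=0$, so by \eqref{model:regu} one has the uniform estimate $|u''(s)| \lesssim x_{i-1}^{\sigma-2} \sim x_i^{\sigma-2}$. Inserting this and evaluating the two elementary integrals gives
\[
x_i^{\sigma-2}\int_{x_{i-1}}^{x_{i+1}}|y-x_i|^{1-\alpha}\,dy \lesssim x_i^{\sigma-2}\bigl(h_i^{2-\alpha} + h_{i+1}^{2-\alpha}\bigr),
\]
and the grid relations \eqref{grad_mesh:e1}--\eqref{grad_mesh:e2} together with the exponent simplification $-\kappa(2-\sigma) + (\kappa-1)(2-\alpha) = -\kappa(\alpha-\sigma)-2+\alpha$ produce exactly the claimed bound $N^{\kappa(\alpha-\sigma)} i^{-\kappa(\alpha-\sigma)-2+\alpha}$.

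The main obstacle is the boundary case $i = 1$, where $x_0 = 0$ lies in the integration interval and the above uniform estimate on $|u''|$ is no longer available. I plan to handle it by splitting the integral at $x_1/2$. On $[x_1/2, x_2]$ the Taylor variable $s$ stays $\ge x_1/2$, so $|u''(s)| \lesssim x_1^{\sigma-2}$ and the previous argument goes through, giving a contribution of order $x_1^{\sigma-2}\bigl(h_1^{2-\alpha}+h_2^{2-\alpha}\bigr)\lesssim x_1^{\sigma-\alpha}$. On the tail $[0, x_1/2]$, where the denominator satisfies $(x_1-y)^{-1-\alpha} \le (x_1/2)^{-1-\alpha}$, I would abandon the cancellation and instead bound each numerator term separately by \eqref{model:regu}: $|u(x_1)| \lesssim x_1^{\sigma}$, $|u'(x_1)||y-x_1| \lesssim x_1^{\sigma}$, and $|u(y)| \lesssim y^{\sigma}$. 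Integrating $x_1^{\sigma}+y^{\sigma}$ over $[0, x_1/2]$ and multiplying by $x_1^{-1-\alpha}$ once more produces an $\mO(x_1^{\sigma-\alpha})$ contribution. Finally $x_1 \sim N^{-\kappa}$ turns this into $N^{\kappa(\alpha-\sigma)}$, which matches the statement of the lemma at $i=1$, and combining with the generic case completes the estimate for all $1 \le i \le N/2$.
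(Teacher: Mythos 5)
Your argument is correct and follows essentially the same route as the paper: Taylor-expand the numerator as a second-order remainder, split the integral at $x_i$, bound $|u''|$ via \eqref{model:regu}, and convert $h_i^{2-\alpha}\max|u''|$ into $N^{\kappa(\alpha-\sigma)}i^{-\kappa(\alpha-\sigma)-2+\alpha}$ using \eqref{grad_mesh:e1}--\eqref{grad_mesh:e2}; your exponent bookkeeping checks out. The one genuine difference is your treatment of $i=1$: the paper simply defers to the argument of Lemma \ref{lem:err_loc}, where the resulting factor $\max_{\xi\in[x_0,x_1]}|u''(\xi)|\lesssim x_0^{\sigma-2}$ is not finite since $x_0=a$, whereas your split at $x_1/2$ --- keeping the Taylor cancellation on $[x_1/2,x_2]$ and bounding the three numerator terms separately on $[0,x_1/2]$ --- supplies the missing estimate and still lands on $x_1^{\sigma-\alpha}\sim N^{\kappa(\alpha-\sigma)}$, which is what the lemma asserts at $i=1$. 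In this respect your write-up is slightly more complete than the paper's at the left endpoint.
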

\begin{proof}
By decomposing the integration into two parts and applying the Taylor expansion, we have
\begin{equation*}
\begin{aligned}
& \quad \Big| \int_{x_{i-1}}^{x_{i+1}} \frac{u(x_i)+u'(x_i)(y-x_i)-u(y)}{|x_i-y|^{1+\alpha}} dy\Big| \\
&\lesssim \max_{\xi\in[x_{i-1},x_i]}|u''(\xi)| \int_{x_{i-1}}^{x_{i}} (x_i-y)^{1-\alpha}dy  +  \max_{\xi\in[x_i,x_{i+1}]}|u''(\xi)| \int_{x_i}^{x_{i+1}} (y-x_i)^{1-\alpha} dy\\
& = \frac{h_i^{2-\alpha}}{2-\alpha}\max_{\xi\in[x_{i-1},x_i]}|u''(\xi)| + \frac{h_{i+1}^{2-\alpha}}{2-\alpha}\max_{\xi\in[x_i,x_{i+1}]}|u''(\xi)|.
\end{aligned}
\end{equation*}
The remaining analysis follows exactly the same argument as in the proof of Lemma \ref{lem:err_loc}. Hence, we have
\begin{equation*}
		\Big| \int_{x_{i-1}}^{x_{i+1}} \frac{u(x_i)+u'(x_i)(y-x_i)-u(y)}{|x_i-y|^{1+\alpha}} dy\Big|  \lesssim   N^{\kappa(\alpha-\sigma)} i^{-\kappa(\alpha-\sigma)-2+\alpha},
	\end{equation*}	
which proves the lemma.
\end{proof}

Thus, by combining Lemmas~\ref{lem:err_hisL}--\ref{lem:err_hisR} and \ref{lem:err_loc_md}, and similar to Lemma \ref{lem:loc-tru-err}, one can derive the truncation error estimate for the modified fast collocation scheme \eqref{sch:fast_collocation_md}.
\begin{lemma}\label{lem:loc-tru-err-md} Assume that \eqref{model:regu} holds. Then the truncation error of the modified fast collocation scheme \eqref{sch:fast_collocation_md} satisfies
	\begin{equation*}
    \begin{aligned}
		|R_i| & = |(-\Delta)^{\frac{\alpha}{2}}u(x_i) - \mL_h^{(m)}[u]_i|\\
        & \lesssim \left\{\begin{aligned}
		 & N^{\kappa(\alpha-\sigma)} i^{-\min\{\kappa(1+\alpha), \, \kappa(\alpha-\sigma)+2-\alpha\}} + \epsilon, & \kappa(1+\sigma)<2,\\
		 & N^{\kappa(\alpha-\sigma)} i^{-\kappa(1+\alpha)} \max\{\ln N,i^\alpha\} + \epsilon, & \kappa(1+\sigma)=2,\\
		 & N^{\kappa(\alpha-\sigma)} i^{-\kappa(\alpha-\sigma)-2+\alpha} + \epsilon, & \kappa(1+\sigma)>2.
		 \end{aligned}\right.
        \end{aligned}
	\end{equation*}
    for $1 \leq i \leq N/2$, and similarly
    \begin{equation*}
		|R_i|  \lesssim \left\{\begin{aligned}
		 & N^{\kappa(\alpha-\sigma)} (N-i)^{-\min\{\kappa(1+\alpha), \, \kappa(\alpha-\sigma)+2-\alpha\}} + \epsilon, & \kappa(1+\sigma)<2,\\
		 & N^{\kappa(\alpha-\sigma)} (N-i)^{-\kappa(1+\alpha)} \max\{\ln N,(N-i)^\alpha\} + \epsilon, & \kappa(1+\sigma)=2,\\
		 & N^{\kappa(\alpha-\sigma)} (N-i)^{-\kappa(\alpha-\sigma)-2+\alpha} + \epsilon, & \kappa(1+\sigma)>2.
		 \end{aligned}\right.
	\end{equation*}
    for $N/2+1 \leq i\leq N-1$.
\end{lemma}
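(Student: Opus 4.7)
The plan is to mirror the proof of Lemma \ref{lem:loc-tru-err} by introducing a ``direct modified'' intermediate operator $\mL^{(m)}[u]_i$, which evaluates the nonlocal convolutions against $\Pi_h u$ exactly while using the new local approximation \eqref{equ:mod_loc3}, and then writing
\begin{equation*}
|R_i| \le |(-\Delta)^{\frac{\alpha}{2}}u(x_i) - \mL^{(m)}[u]_i| + |\mL^{(m)}[u]_i - \mL^{(m)}_h[u]_i|.
\end{equation*}
Because only the local piece of the scheme has been changed, the left- and right-sided contributions to the first term coincide with those in the original scheme and are controlled verbatim by Lemmas \ref{lem:err_hisL} and \ref{lem:err_hisR}. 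The second term involves only the SOE error on the two nonlocal integrals, so the argument leading to \eqref{equ:loc3} applies unchanged and yields the $\epsilon\,\max_{\xi\in[a,b]}|u(\xi)|$ contribution via Lemma \ref{lem:soe}.

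The only genuinely new ingredient is the local truncation error
\begin{equation*}
\int_{x_{i-1}}^{x_{i+1}} \frac{u(x_i) - u(y)}{|x_i-y|^{1+\alpha}}\,dy \;-\; D_h u(x_i)\,\eta_i^\alpha,
\end{equation*}
where $D_h u(x_i)$ stands for the centered-difference formula \eqref{equ:u_pri}. I plan to add and subtract $u'(x_i)\eta_i^\alpha$ and use the identity $\int_{x_{i-1}}^{x_{i+1}} \frac{u'(x_i)(y-x_i)}{|x_i-y|^{1+\alpha}}\,dy = -u'(x_i)\eta_i^\alpha$ to split this into the Taylor-remainder integral already estimated in Lemma \ref{lem:err_loc_md} and a finite-difference residual $[u'(x_i) - D_h u(x_i)]\,\eta_i^\alpha$ that must be bounded separately.

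The hard part is this finite-difference residual under the weak regularity \eqref{model:regu}, which controls only $u''$. I would apply Taylor's theorem with integral remainder to $u(x_{i\pm 1})$ around $x_i$, use the exact cancellation of the constant and linear terms built into the weights of \eqref{equ:u_pri}, and derive $|u'(x_i) - D_h u(x_i)| \lesssim \frac{h_i h_{i+1}}{h_i+h_{i+1}}\bigl(x_{i-1}^{\sigma-2}+x_i^{\sigma-2}\bigr)$ for $2\le i\le N/2$; the boundary index $i=1$ requires a separate integration-by-parts argument of the type used for $I_1$ in Lemma \ref{lem:err_hisL}. Combining this with the bound $|\eta_i^\alpha| \lesssim h_i^{1-\alpha}$, valid on the graded grid \eqref{grad_mesh:e1} since $h_i$ and $h_{i+1}$ are of the same order and $1-\alpha>0$ for $\alpha\in(0,1)$, the product becomes at most $h_i^{2-\alpha}\,x_{i-1}^{\sigma-2}$, and inserting \eqref{grad_mesh:e2} produces exactly $N^{\kappa(\alpha-\sigma)} i^{-\kappa(\alpha-\sigma)-2+\alpha}$.

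This finite-difference contribution matches the Lemma \ref{lem:err_loc_md} bound and a short check shows it is absorbed into each of the three claimed regimes of $\kappa(1+\sigma)$. Assembling the left-sided, right-sided, local and SOE bounds then proves the estimate for $1 \le i \le N/2$, and the case $N/2+1 \le i \le N-1$ follows from the symmetry of the mesh \eqref{grad_mesh:e1} under the reflection $i\leftrightarrow N-i$, which merely swaps the roles of the two nonlocal integrals.
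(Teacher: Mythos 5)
Your proposal follows essentially the same route as the paper, which proves this lemma in one line by combining Lemmas \ref{lem:err_hisL}--\ref{lem:err_hisR} and \ref{lem:err_loc_md} with the SOE-error argument of \eqref{equ:loc3} and the mesh symmetry for $N/2+1\le i\le N-1$; your decomposition, your reuse of the nonlocal and SOE estimates, and your reduction of the local part to Lemma \ref{lem:err_loc_md} via the identity $\int_{x_{i-1}}^{x_{i+1}}\frac{u'(x_i)(y-x_i)}{|x_i-y|^{1+\alpha}}\,dy=-u'(x_i)\eta_i^\alpha$ all match. The one place you go beyond the paper is the explicit treatment of the finite-difference residual $[u'(x_i)-D_hu(x_i)]\,\eta_i^\alpha$, which the paper's appeal to Lemma \ref{lem:err_loc_md} silently skips; your bound $\frac{h_ih_{i+1}}{h_i+h_{i+1}}\,x_{i-1}^{\sigma-2}\cdot h_{i+1}^{1-\alpha}\lesssim N^{\kappa(\alpha-\sigma)}i^{-\kappa(\alpha-\sigma)-2+\alpha}$ is correct, is absorbed into all three regimes (in the case $\kappa(1+\sigma)=2$ it equals the $i^\alpha$ branch of the maximum), and so the proof is complete.
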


Moreover, similarly to the proof of Theorem \ref{thm:conv}, the main error estimate for the modified fast collocation scheme \eqref{sch:fast_collocation_md} is presented as follows.
\begin{theorem}\label{thm:conv-md}
Suppose that $u(x)\in C^2(0, 2L)$ and assumption \eqref{model:regu} holds. Let $\{U_i\}$ denote the numerical solution of the modified fast collocation scheme \eqref{sch:fast_collocation_md}. Then, for sufficiently small $\epsilon$, we have
\begin{equation*}
\max_{1\leq i\leq N-1} |u(x_i)-U_i| \lesssim \left\{
 \begin{aligned}
& N^{-\kappa\sigma} + \epsilon, & \kappa(1+\sigma) < 2,\\
& N^{-\kappa \sigma}\ln N + \epsilon, & \kappa(1+\sigma) = 2,\\
& N^{-\min\{\kappa\sigma,2-\alpha\}}+ \epsilon ,  & \kappa(1+\sigma) > 2.
 \end{aligned}\right.
\end{equation*}
\end{theorem}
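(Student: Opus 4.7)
The plan is to transcribe the proof of Theorem \ref{thm:conv} almost verbatim, replacing $\bm{A}$ by $\bm{A}^{(m)}$ and invoking Lemma \ref{lem:loc-tru-err-md} in place of Lemma \ref{lem:loc-tru-err}. Set $e_i := u(x_i) - U_i$ with $e_0 = e_N = 0$. The error equation reads $\sum_{j=1}^{N-1} a^{(m)}_{i,j} e_j = R_i$ for $i=1,\dots,N-1$. Pick $i_0$ with $|e_{i_0}| = \max_i |e_i|$, and use the sign structure of $\bm{A}^{(m)}$ (positive diagonal, negative off-diagonals) to obtain
\begin{equation*}
|R_{i_0}| \;\geq\; a^{(m)}_{i_0,i_0}|e_{i_0}| - \sum_{j\neq i_0} |a^{(m)}_{i_0,j}|\,|e_j| \;\geq\; |e_{i_0}|\sum_{j=1}^{N-1} a^{(m)}_{i_0,j},
\end{equation*}
so that $\max_i |e_i| \leq |R_{i_0}|/\sum_j a^{(m)}_{i_0,j}$.

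Next, one needs a row-sum lower bound of the form $\sum_j a^{(m)}_{i_0,j} \geq \Xi_{i_0} - C\epsilon$, with $\Xi_{i_0}$ as in the proof of Theorem \ref{thm:conv}. By comparing the entries \eqref{matrix-form:c1}--\eqref{matrix-form:c3} with their direct counterparts via the SOE bound \eqref{equ:soe}, one splits the row sum into an exact direct-scheme contribution and an SOE perturbation controlled by $(b-a)\epsilon$; the direct contribution reduces, just as in \eqref{matrix-form:a5}, to $\Xi_{i_0}$ modulo the difference between the original local stencil \eqref{Lap_approx:e6} and the modified local stencil \eqref{equ:mod_loc3}. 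This local discrepancy must be shown to be nonnegative (or of the same order as $\Xi_{i_0}$ and absorbable) on the symmetric graded grid \eqref{grad_mesh:e1}. Using \eqref{grad_mesh:e1} and the mean value theorem, $\Xi_{i_0} \gtrsim (x_{i_0}-a)^{-\alpha} + (b-x_{i_0})^{-\alpha} \gtrsim N^{\kappa\alpha} i_0^{-\kappa\alpha}$ when $i_0\leq N/2$, and the $O(\epsilon)$ perturbation is absorbed by choosing $\epsilon \leq 1/(4\alpha L^{1+\alpha})$, exactly as in Theorem \ref{thm:conv}.

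Combining the row-sum lower bound with the truncation estimate from Lemma \ref{lem:loc-tru-err-md} yields
\begin{equation*}
\max_{1\leq i\leq N-1} |e_i| \;\lesssim\; N^{-\kappa\sigma}\, i_0^{-\min\{\kappa,\; -\kappa\sigma+2-\alpha\}} + \epsilon
\end{equation*}
in the case $\kappa(1+\sigma)<2$, with analogous expressions in the other two regimes, for $i_0\leq N/2$; the symmetric bound of Lemma \ref{lem:loc-tru-err-md} handles $i_0 > N/2$, which is exactly the reason the symmetric graded grid \eqref{grad_mesh:e1} is adopted. Then, a case split identical to that in the proof of Theorem \ref{thm:conv} closes the argument: for $\kappa(1+\sigma)<2$ the exponent $\min\{\kappa,-\kappa\sigma+2-\alpha\}$ is positive so the $i_0$-factor is bounded by $1$; for $\kappa(1+\sigma)=2$ one uses $i_0^{-\kappa}\leq 1$ and $i_0^{\alpha-\kappa}\leq 1$ to pick up the $\ln N$; and for $\kappa(1+\sigma)>2$ the factor $i_0^{\kappa\sigma-2+\alpha}$ is maximized at $i_0\sim N$, yielding the stated $N^{-\min\{\kappa\sigma,\,2-\alpha\}}$.

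The main obstacle is the first part of the second paragraph: justifying $\sum_j a^{(m)}_{i_0,j}\geq \Xi_{i_0} - C\epsilon$ on the graded mesh \eqref{grad_mesh:e1}. Theorem \ref{thm:dia-dom-mat-m} proves diagonal dominance of $\bm{A}^{(m)}$ only on uniform grids, so one must either extend that diagonal-dominance argument directly to the symmetric graded mesh, or verify separately that the difference $\mL_{loc}^{(m)}[u]_i - \mL_{loc}[u]_i$ of the two local stencils makes a contribution to the row sum that does not spoil the $\Xi_{i_0}$ lower bound. Given that the modification affects only three neighboring columns and the perturbation scales as $\eta_i^\alpha/h$ (which is dominated by $h^{-\alpha}/[\alpha(1-\alpha)]$ entering $\Xi_{i_0}$), this is plausible but requires careful arithmetic. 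Once this mesh-dependent row-sum estimate is secured, the remainder of the proof is a verbatim adaptation of the argument already used for Theorem \ref{thm:conv}.
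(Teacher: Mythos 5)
Your proposal takes exactly the same route as the paper: the paper's entire ``proof'' of Theorem~\ref{thm:conv-md} is the single remark that it follows ``similarly to the proof of Theorem~\ref{thm:conv}'' once Lemma~\ref{lem:err_loc} is replaced by Lemma~\ref{lem:err_loc_md} in the truncation estimate, which is precisely the transcription you describe. The obstacle you flag in your last paragraph --- that the sign structure and the row-sum lower bound $\sum_j a^{(m)}_{i_0,j}\geq \Xi_{i_0}-C\epsilon$ for $\bm{A}^{(m)}$ are only established in Theorem~\ref{thm:dia-dom-mat-m} on \emph{uniform} grids, while the error bound is claimed on the symmetric graded mesh \eqref{grad_mesh:e1} --- is genuine, and the paper does not close it either; so your proposal is exactly as complete as the paper's argument, with the added merit of naming the missing mesh-dependent estimate explicitly.
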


\section{Numerical experiments}\label{sec:num}
In this section, we carry out numerical experiments to demonstrate the performance of the newly developed fast collocation scheme \eqref{sch:fast_Collocation} and the modified scheme \eqref{sch:fast_collocation_md}. The simulations will be terminated when either of the following conditions is met: the relative residual error falls below $10^{-8}$, or the number of iterations exceeds the maximum allowed. The tolerance in the SOE approximation is set to $\epsilon=10^{-8}$. All numerical experiments are performed using Matlab R2019b on a laptop with the configuration: AMD Ryzen 7 5800H with Radeon Graphics @ 3.20 GHz and 16.00 GB RAM. Besides, the errors and corresponding convergence orders (Cov.) are measured by  
$$
\|e_N\|_{\infty}:=\max_{1\leq i\leq N-1}|u(x_i)-U_i|, \quad \text{Cov.}=\log_2\left(\|e_N\|_{\infty}/ \|e_{2N}\|_\infty \right).
$$

One of the few examples where the analytical solution to the fractional Laplacian Dirichlet problem 
\begin{equation*}
\left\{
\begin{aligned}
 (-\Delta)^{\frac{\alpha}{2}}u  & = 1,  &&\text{in}\ (0,2),\\
 u  & = 0,  &&\text{in}\, (-\infty,0]\cup[2,+\infty),
\end{aligned}\right.
\end{equation*}
is given by \cite{G61} with $\sigma=\frac{\alpha}{2}$ in \eqref{model:regu} that
\begin{equation*}
u(x)=\frac{2^{-\alpha}\sqrt{\pi}}{\Gamma (\frac{1+\alpha}{2} ) \Gamma (1+\frac{\alpha}{2} )} \left[x(2-x)\right]^{\frac{\alpha}{2}}.
\end{equation*}

First, we test the accuracy of the proposed fast collocation scheme \eqref{sch:fast_Collocation} and the modified scheme \eqref{sch:fast_collocation_md}. Numerical results for various combinations of fractional order $\alpha$ $(0<\alpha<1)$ and the graded parameter $\kappa$ are presented in Tables \ref{tab:cov1}--\ref{tab:cov3}. 
It clearly shows that both schemes exhibit the same convergence rate. As observed, the convergence order is almost $\mO(N^{-\kappa\sigma})$ when $\kappa(1+\sigma)\leq 2$ and $\mO(N^{-\min\{2-\alpha,\kappa\sigma\}})$ when $\kappa(1+\sigma)>2$, which aligns closely with the theoretical result established in Theorems \ref{thm:conv} and \ref{thm:conv-md} for small $\epsilon$. In particular, the optimal convergence order $\mO(N^{-(2-\alpha)})$ can be achieved when $\kappa={(2-\alpha)}/{\sigma}$. Moreover, the modified fast collocation scheme achieves smaller errors on uniform grids, whereas the original fast collocation scheme performs better on nonuniform grids with $\kappa = (2-\alpha)/\sigma$, especially for small $\alpha$. These results suggest that, for $\alpha \in (0,1)$, the original scheme is more suitable for problems on nonuniform grids, while the modified fast collocation scheme is preferable for uniform grids. 
\begin{table}[!thbp]\small
\vspace{-12pt}
\renewcommand{\arraystretch}{0.75}
	\centering 
	\caption{Errors and convergence orders of two fast schemes for $\alpha=0.8$}\label{tab:cov1}
	\setlength{\tabcolsep}{3mm}
	\begin{tabular}{ c c c c c c c c c c}
		\toprule
       \multirow{2}{*}{$\kappa$} &\multirow{2}{*}{$N$} & \multicolumn{2}{c}{the original fast scheme \eqref{sch:fast_Collocation}} & \multicolumn{2}{c}{the modified scheme \eqref{sch:fast_collocation_md}} \\ 
		\cmidrule(r){3-4}     \cmidrule(r){5-6}    	
		  &  & $\|e_N\|_{\infty}$ & Cov. & $\|e_N\|_{\infty}$ & Cov.  \\
        \midrule
		  & $2^6$    & 1.5655e-01 & --- & 4.1364e-02 & --- \\
$1$ & $2^7$ & 1.1754e-01 & 0.4136 & 3.1191e-02 & 0.4073 \\
		& $2^8$    & 8.8630e-02 & 0.4072 & 2.3570e-02 & 0.4041 \\
& $2^9$    & 6.6991e-02 & 0.4038 & 1.7835e-02 & 0.4023 \\[0.03in]
        & &\multicolumn{2}{c}{$\kappa \sigma =0.40$ } & \multicolumn{2}{c}{$\kappa \sigma =0.40$ } \\
        \midrule
		  & $2^6$    & 1.0577e-01 & --- & 6.0620e-02 & --- \\
	$2/(1+\sigma)$ & $2^7$ & 7.0500e-02 & 0.5853 & 4.0661e-02 & 0.5761 \\
		  & $2^8$    & 4.7223e-02 & 0.5781 & 2.7320e-02 & 0.5737 \\
		 & $2^9$    & 3.1709e-02 & 0.5746 & 1.8371e-02 & 0.5725 \\[0.03in]
		& &	\multicolumn{2}{c}{$ \kappa \sigma \approx 0.57$ }   
		   & \multicolumn{2}{c}{$  \kappa \sigma \approx 0.57$ } \\
           \midrule
		  & $2^6$    & 1.0088e-01 & --- & 6.3670e-02 & --- \\
$(2-\alpha)/2\sigma$ & $2^7$ & 6.5980e-02 & 0.6126 & 4.1885e-02 & 0.6042 \\
	& $2^8$    & 4.3350e-02 & 0.6060 & 2.7596e-02 & 0.6020 \\
		& $2^9$    & 2.8545e-02 & 0.6028 & 1.8195e-02 & 0.6009 \\[0.03in]
        & &\multicolumn{2}{l}{$\min\{ 2-\alpha, \kappa \sigma \}=\kappa \sigma =0.60$ } & \multicolumn{2}{l}{$\min\{ 2-\alpha, \kappa \sigma \} =\kappa \sigma =0.60$ } \\
        \midrule
		  & $2^6$    & 7.0863e-02 & --- & 1.0682e-01 & --- \\
	$(2-\alpha)/\sigma$ & $2^7$ & 4.6506e-02 & 1.1679 & 4.6506e-02 & 1.1997 \\
		  & $2^8$    & 1.3861e-02 & 1.1861 & 2.0245e-02 & 1.1999 \\
		& $2^9$    & 6.0603e-03 & 1.1936 & 8.8126e-03 & 1.1999 \\[0.03in]
		& &	\multicolumn{2}{c}{$ \min\{ 2-\alpha, \kappa \sigma \} =2-\alpha =1.20$ }   
		   & \multicolumn{2}{c}{$ \min\{ 2-\alpha, \kappa \sigma \} =2-\alpha =1.20$ } \\
		\bottomrule
	\end{tabular}
\end{table}
\begin{table}[!htbp]\small
\vspace{-12pt}
\renewcommand{\arraystretch}{0.75}
	\centering 
	\caption{Errors and convergence orders of two fast schemes for $\alpha=0.6$}
	\setlength{\tabcolsep}{3mm}
	\begin{tabular}{ c c c c c c c c c c}
		\toprule
       \multirow{2}{*}{$\kappa$} &\multirow{2}{*}{$N$} & \multicolumn{2}{c}{the original fast scheme \eqref{sch:fast_Collocation}} & \multicolumn{2}{c}{the modified scheme \eqref{sch:fast_collocation_md}} \\ 
		\cmidrule(r){3-4}     \cmidrule(r){5-6}    	
		  &  & $\|e_N\|_{\infty}$ & Cov. & $\|e_N\|_{\infty}$ & Cov.  \\
        \midrule
		  & $2^6$    & 1.3947e-01 & --- & 2.9979e-02 & --- \\
$1$     & $2^7$    & 1.1277e-01 & 0.3065 & 2.2644e-02 & 0.2958 \\
		& $2^8$    & 9.1393e-02 & 0.3033 & 1.8419e-02 & 0.2979 \\
        & $2^9$    & 7.4150e-02 & 0.3016 & 1.4971e-02 & 0.2990 \\[0.03in]
        & &\multicolumn{2}{c}{$\kappa \sigma =0.30$ } & \multicolumn{2}{c}{$\kappa \sigma =0.30$ } \\
        \midrule
		  & $2^6$    & 9.6555e-02 & --- & 6.8511e-02 & --- \\
$2/(1+\sigma)$ & $2^7$ & 6.9978e-02 & 0.4644 & 4.9703e-02 & 0.4630 \\
		  & $2^8$    & 5.0776e-02 & 0.4628 & 3.6079e-02 & 0.4622 \\
		  & $2^9$    & 3.6862e-02 & 0.4620 & 2.6196e-02 & 0.4618 \\[0.03in]
		& &	\multicolumn{2}{c}{$ \kappa \sigma \approx 0.46$ }   
		   & \multicolumn{2}{c}{$  \kappa \sigma \approx 0.46$ } \\
           \midrule
		  & $2^6$    & 5.4187e-02 & --- & 1.7638e-01 & --- \\
$(2-\alpha)/2\sigma$ & $2^7$ & 3.3337e-02 & 0.7008 & 1.0855e-01 & 0.7003 \\
	    & $2^8$    & 2.0517e-02 & 0.7003 & 6.6817e-02 & 0.7001 \\
		& $2^9$    & 1.2628e-02 & 0.7001 & 4.1129e-02 & 0.7000 \\[0.03in]
        & &\multicolumn{2}{l}{$\min\{ 2-\alpha, \kappa \sigma \} =\kappa \sigma=0.70$ } & \multicolumn{2}{l}{$\min\{ 2-\alpha, \kappa \sigma \} =\kappa \sigma=0.70$ } \\
        \midrule
		  & $2^6$    & 2.1898e-02 & --- & 3.3124e-01 & --- \\
	$(2-\alpha)/\sigma$ & $2^7$ & 8.5536e-03 & 1.3562 & 1.2559e-01 & 1.3991 \\
		  & $2^8$    & 3.3051e-03 & 1.3718 & 4.7602e-02 & 1.3997 \\
		& $2^9$    & 1.2687e-03 & 1.3813 & 1.8039e-02 & 1.3999 \\[0.03in]
		& &	\multicolumn{2}{l}{$ \min\{ 2-\alpha, \kappa \sigma \} =2-\alpha =1.40$ }   
		   & \multicolumn{2}{l}{$ \min\{ 2-\alpha, \kappa \sigma \} =2-\alpha =1.40$ } \\
		\bottomrule
	\end{tabular}
\end{table}	

\begin{table}[!htbp]\small
\vspace{-12pt}
\renewcommand{\arraystretch}{0.75}
	\centering 
	\caption{Errors and convergence orders of two fast schemes for $\alpha=0.4$}\label{tab:cov3}
	\setlength{\tabcolsep}{3mm}
	\begin{tabular}{ c c c c c c c c c c}
		\toprule
       \multirow{2}{*}{$\kappa$} &\multirow{2}{*}{$N$} & \multicolumn{2}{c}{the original fast scheme \eqref{sch:fast_Collocation}} & \multicolumn{2}{c}{the modified scheme \eqref{sch:fast_collocation_md}} \\ 
		\cmidrule(r){3-4}     \cmidrule(r){5-6}    	
		  &  & $\|e_N\|_{\infty}$ & Cov. & $\|e_N\|_{\infty}$ & Cov.  \\
        \midrule
		  & $2^6$    & 1.1918e-01 & --- & 2.1581e-02 & --- \\
$1$     & $2^7$    & 1.0343e-01 & 0.2045 & 1.8620e-02 & 0.2129 \\
		& $2^8$    & 8.9907e-02 & 0.2022 & 1.6139e-02 & 0.2063 \\
        & $2^9$    & 7.8210e-02 & 0.2011 & 1.4019e-02 & 0.2031 \\[0.03in]
        & &\multicolumn{2}{c}{$\kappa \sigma =0.20$ } & \multicolumn{2}{c}{$\kappa \sigma =0.20$ } \\
        \midrule
		  & $2^6$    & 8.8054e-02 & --- & 7.7713e-02 & --- \\
$2/(1+\sigma)$ & $2^7$ & 6.9829e-02 & 0.3346 & 6.1674e-02 & 0.3335 \\
		  & $2^8$    & 5.5406e-02 & 0.3338 & 4.8949e-02 & 0.3334 \\
		  & $2^9$    & 4.3971e-02 & 0.3335 & 3.8850e-02 & 0.3334 \\[0.03in]
		& &	\multicolumn{2}{c}{$ \kappa \sigma \approx 0.33$ }   
		   & \multicolumn{2}{c}{$  \kappa \sigma \approx 0.33$ } \\
           \midrule
		  & $2^6$    & 3.0355e-02 & --- & 2.0859e-01 & --- \\
$(2-\alpha)/2\sigma$ & $2^7$ & 1.7439e-02 & 0.7996 & 1.1981e-01 & 0.7999 \\
	    & $2^8$    & 1.0017e-02 & 0.7999 & 6.8812e-02 & 0.8000 \\
		& $2^9$    & 7.5735e-03 & 0.8000 & 3.9522e-02 & 0.8000 \\[0.03in]
        & &\multicolumn{2}{l}{$\min\{ 2-\alpha, \kappa \sigma \} =\kappa \sigma =0.80$ } & \multicolumn{2}{l}{$\min\{ 2-\alpha, \kappa \sigma \} =\kappa \sigma=0.80$ } \\
        \midrule
		  & $2^6$    & 1.0649e-02 & --- & 7.7674e-01 & --- \\
	$(2-\alpha)/\sigma$ & $2^7$ & 3.6670e-03 & 1.5380 & 2.5633e-01 & 1.5994 \\
		  & $2^8$    & 1.2299e-03 & 1.5761 & 8.4590e-02 & 1.5994 \\
		& $2^9$    & 4.0736e-04 & 1.5942 & 2.7914e-02 & 1.5995 \\[0.03in]
		& &	\multicolumn{2}{l}{$ \min\{ 2-\alpha, \kappa \sigma \} =2-\alpha =1.60$ }   
		   & \multicolumn{2}{l}{$ \min\{ 2-\alpha, \kappa \sigma \} =2-\alpha =1.60$ } \\
		\bottomrule
	\end{tabular}
\end{table}	

Next, we evaluate the efficiency of the proposed fast collocation scheme. In Tables \ref{tab:eff-1}--\ref{tab:eff-2}, we present the errors and CPU times for the direct collocation scheme \eqref{sch:direct_Collocation} solved by the Gaussian elimination solver (denoted as D-GE), the direct collocation scheme solved by the BiCGSTAB iterative solver (denoted as D-BiCGSTAB), the fast collocation scheme \eqref{sch:fast_Collocation} solved by the fast version BiCGSTAB iterative solver (denoted as F-BiCGSTAB), and the preconditioned fast collocation scheme solved by the fast version BiCGSTAB iterative solver (denoted as PF-BiCGSTAB). Meanwhile, the average number of iterations (Iter.) is also listed for the iterative solver. We can reach the following observations: (i) All these schemes with different solvers generate almost identical numerical solutions; (ii) The PF-BiCGSTAB algorithm takes significantly less CPU time than the other three algorithms, especially for large-scale modeling. For example, in the case of $\alpha=0.9$, $\kappa=1$ and $N=2^{13}$, the CPU time consumed by the D-GE algorithm is more than 20 minutes! In contrast, the PF-BiCGSTAB algorithm only costs no more than 2 seconds; (iii) The use of nonuniform grids does not compromise the computational accuracy of the PF-BiCGSTAB algorithm, and it nevertheless remains the most efficient among all the algorithms compared. (iv) It is clear from Table \ref{tab:eff-3} that round-off errors have begun to noticeably affect the numerical results of the D-BiCGSTAB algorithm for $N=2^{13}$. In fact, under this scenario the corresponding numerical solution diverges. In summary, all these results show the superiority of the PF-BiCGSTAB algorithm with the use of the developed fast matrix-vector multiplication and preconditioner.
\begin{table}[!thbp]\small
\vspace{-12pt}
	\centering  
	\caption{Performance of different algorithms for $\alpha=0.9,~ \kappa=1$}\label{tab:eff-1}
	\setlength{\tabcolsep}{3mm}
	\begin{tabular}{ c c c c c c c}
		\toprule
		\multirow{2}{*}{$N$}&  \multicolumn{2}{c}{D-GE} & \multicolumn{3}{c}{D-BiCGSTAB} \\ 
		\cmidrule(r){2-4}     \cmidrule(r){5-7}    
		& $\|e_N\|_{\infty}$ & CPU & & $\|e_N\|_{\infty}$ & CPU  & Iter.   \\	
		\midrule
		$2^{10}$ & 5.0874e-02 & 0.70s  &  & 5.0874e-02 & 0.56s  & 159   \\
		$2^{11}$ & 3.7159e-02 & 10s &  & 3.7159e-02 & 7.11s  & 206   \\
		$2^{12}$ & 2.7169e-02 & 145s & & 2.7169e-02 & 43s & 285  \\ 
		$2^{13}$ & 1.9876e-02 & 1396s& & 1.9876e-02 & 294s   & 404    \\
		\midrule
		\multirow{2}{*}{$N$} & \multicolumn{2}{c}{F-BiCGSTAB} & \multicolumn{3}{c}{PF-BiCGSTAB} \\ 
		\cmidrule(r){2-4}     \cmidrule(r){5-7}    
		& $\|e_N\|_{\infty}$ & CPU & Iter. & $\|e_N\|_{\infty}$ & CPU  & Iter.   \\	
		\midrule
		$2^{10}$ & 5.0871e-02 & 0.20s  & 155 & 5.0871e-02 & 0.05s  & 36   \\
		$2^{11}$ & 3.7157e-02 & 0.55s  & 208 & 3.7157e-02 & 0.13s  & 45   \\
		$2^{12}$ & 2.7168e-02 & 1.53s  & 281 & 2.7168e-02 & 0.33s  & 58  \\ 
		$2^{13}$ & 1.9876e-02 & 9.03s  & 444 & 1.9876e-02 & 1.42s  & 78    \\
		\bottomrule
	\end{tabular}
\end{table}	
\begin{table}[!thbp]\small
\vspace{-12pt}
		\centering  
        \caption{Performance of different algorithms for $\alpha=0.9, ~\kappa=\frac{2-\alpha}{2\sigma}$}\label{tab:eff-2}
        \setlength{\tabcolsep}{3mm}
		\begin{tabular}{ c c c c c c c}
			\toprule
			\multirow{2}{*}{$N$}&  \multicolumn{2}{c}{D-GE} & \multicolumn{3}{c}{D-BiCGSTAB} \\ 
			\cmidrule(r){2-4}     \cmidrule(r){5-7}    
			& $\|e_N\|_{\infty}$ & CPU & & $\|e_N\|_{\infty}$ & CPU  & Iter.   \\	
			\midrule
			$2^{10}$ & 3.3811e-02 & 0.78s  &  & 3.3811e-02 & 0.95s  & 229   \\
			$2^{11}$ & 2.3051e-02 & 11s &  & 2.3051e-02 & 12s  & 336   \\
			$2^{12}$ & 1.5730e-02 & 139s & & 1.5730e-02 & 73s & 479  \\ 
			$2^{13}$ & 1.0739e-02 & 1409s& & 1.0739e-02 & 504s   & 712    \\
			\midrule
			\multirow{2}{*}{$N$}  & \multicolumn{2}{c}{F-BiCGSTAB} & \multicolumn{3}{c}{PF-BiCGSTAB} \\ 
			\cmidrule(r){2-4}     \cmidrule(r){5-7}    
			& $\|e_N\|_{\infty}$ & CPU & Iter. & $\|e_N\|_{\infty}$ & CPU  & Iter.   \\	
			\midrule
			$2^{10}$  & 3.3810e-02 & 0.28s  & 206 & 3.3810e-02 & 0.06s  & 39   \\
			$2^{11}$  & 2.3051e-02 & 0.83s  & 297 & 2.3051e-02 & 0.14s  & 55   \\
			$2^{12}$  & 1.5729e-02 & 2.69s  & 468 & 1.5729e-02 & 0.38s  & 63  \\ 
			$2^{13}$  & 1.0739e-02 & 14s  & 672 & 1.0739e-02 & 1.53s  & 76    \\
			\bottomrule
		\end{tabular}
\end{table}
\begin{table}[htbp!]\small
\vspace{-12pt}
		\centering  
        \caption{Performance of different algorithms for $\alpha=0.9, ~\kappa=\frac{(2-\alpha)}{\sigma}$}\label{tab:eff-3}
        \setlength{\tabcolsep}{3mm}
		\begin{tabular}{ c c c c c c c}
			\toprule
			\multirow{2}{*}{$N$}&  \multicolumn{2}{c}{D-GE} & \multicolumn{3}{c}{D-BiCGSTAB} \\ 
			\cmidrule(r){2-4}     \cmidrule(r){5-7}    
			& $\|e_N\|_{\infty}$ & CPU & & $\|e_N\|_{\infty}$ & CPU  & Iter.   \\	
			\midrule
			$2^{10}$ & 9.1320e-03 & 0.77s  &  & 9.1320e-03 & 18s  & 5011   \\
			$2^{11}$ & 4.2708e-03 & 10s &  & 4.2708e-03 & 587s  & 17359   \\
			$2^{12}$ & 1.9946e-03 & 139s & & 1.9946e-03 & 6096s & 40372  \\ 
			$2^{13}$ & 9.3096e-04 & 1383s& &  N/A&    &     \\
			\midrule
			\multirow{2}{*}{$N$}  & \multicolumn{2}{c}{F-BiCGSTAB} & \multicolumn{3}{c}{PF-BiCGSTAB} \\ 
			\cmidrule(r){2-4}     \cmidrule(r){5-7}    
			& $\|e_N\|_{\infty}$ & CPU & Iter. & $\|e_N\|_{\infty}$ & CPU  & Iter.   \\	
			\midrule
			$2^{10}$  & 9.1320e-03 & 5.80s  & 2663 & 9.1320e-03 & 0.05s  & 25   \\
			$2^{11}$  & 4.2708e-03 & 23s  & 5674 & 4.2708e-03 & 0.16s  & 37   \\
			$2^{12}$  & 1.9946e-03 & 140s  & 17082 & 1.9946e-03 & 0.44s  & 50  \\ 
			$2^{13}$  & 9.3096e-04 & 2001s  & 77835 & 9.3096e-04 & 1.86s  & 68    \\
			\bottomrule
		\end{tabular}
\end{table}

\begin{figure}[!thbp]\small
	\centering
	\subfigure[$\kappa=1$]{\includegraphics[width=0.45\textwidth]{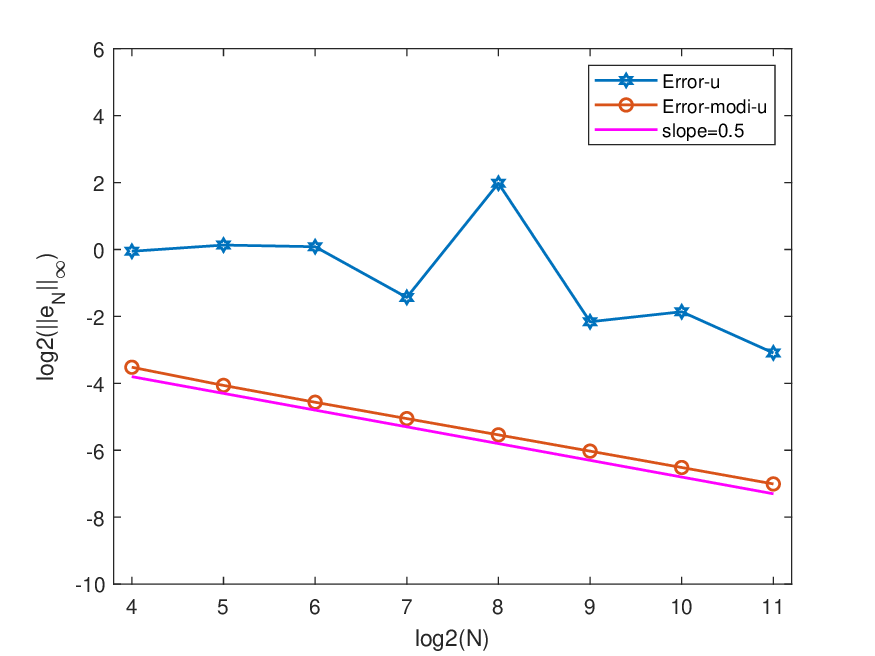}}
	\subfigure[$\kappa=\frac{2(2-\alpha)}{\alpha}$]{\includegraphics[width=0.45\textwidth]{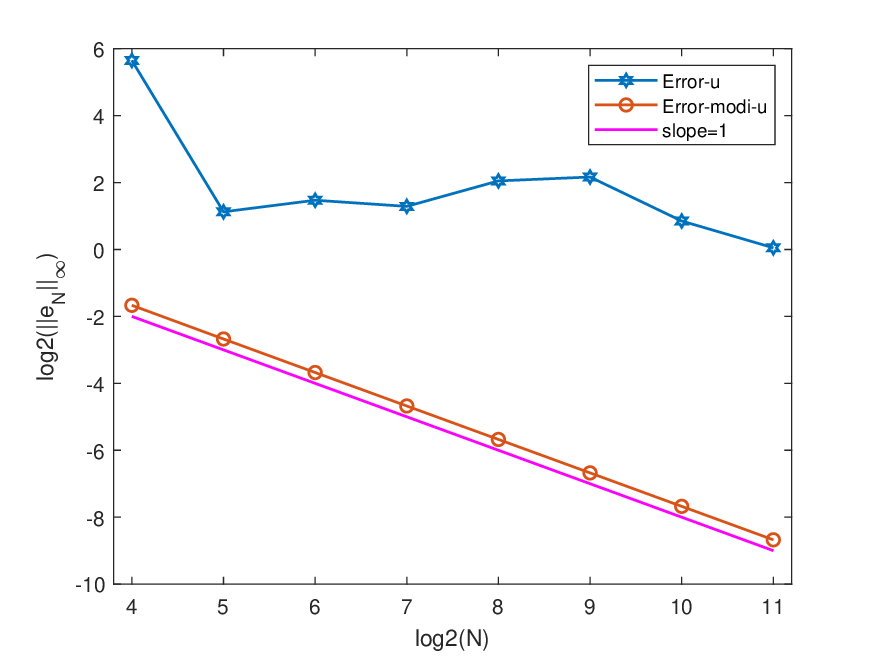}}
    \subfigure[$\kappa=1$]{\includegraphics[width=0.45\textwidth]{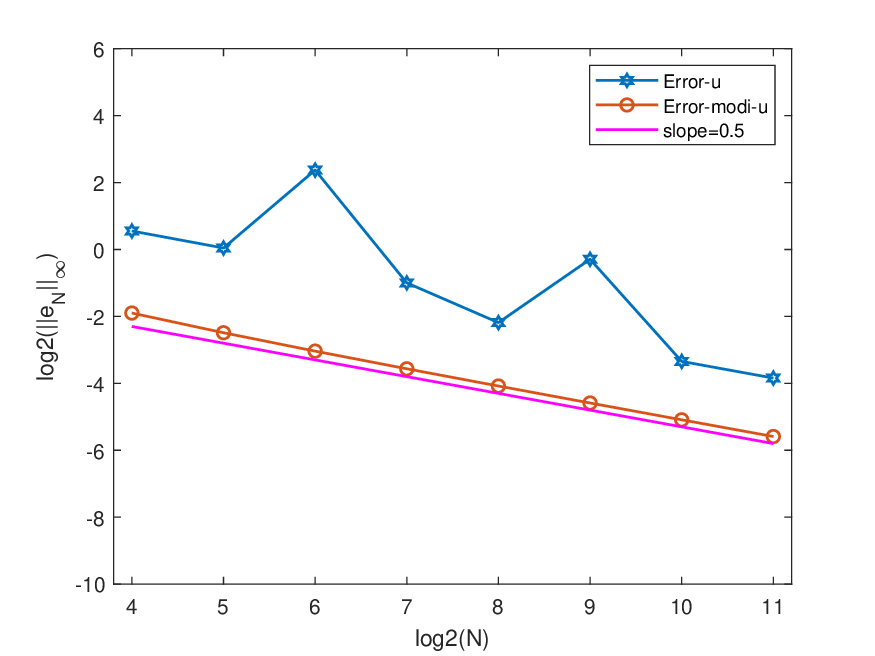}}
	\subfigure[$\kappa=\frac{2(2-\alpha)}{\alpha}$]{\includegraphics[width=0.45\textwidth]{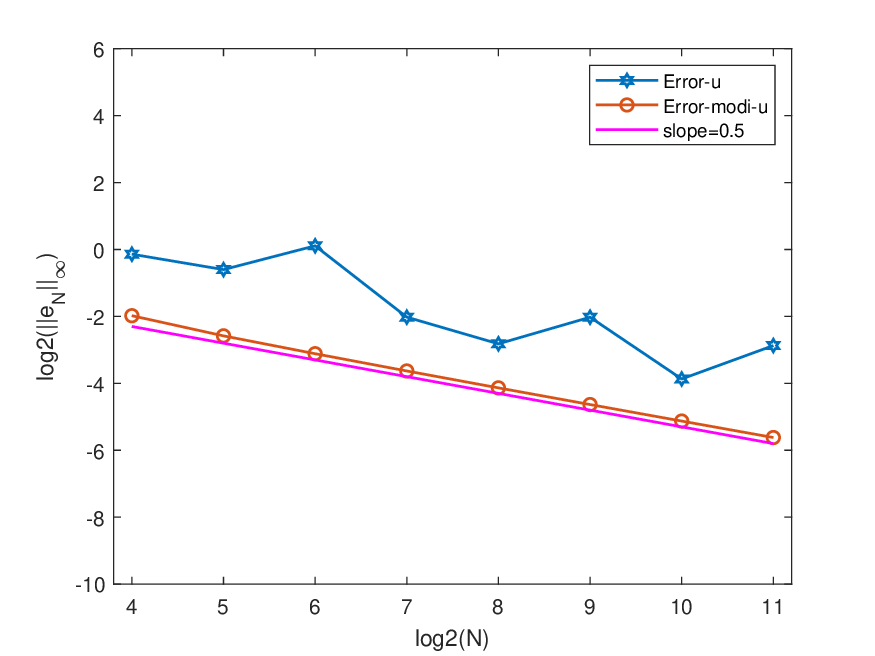}}
    \subfigure[$\kappa=1$]{\includegraphics[width=0.45\textwidth]{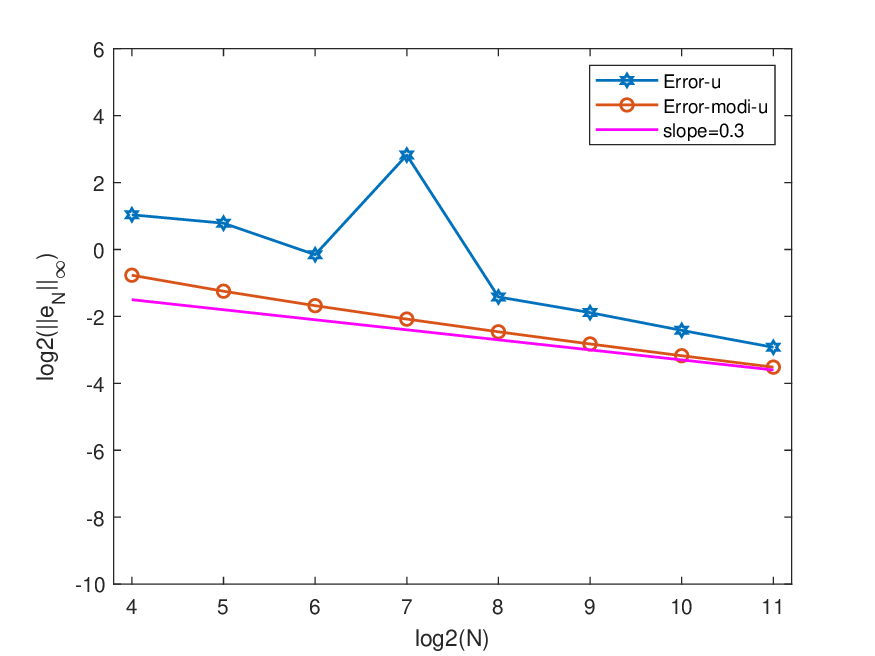}}
	\subfigure[$\kappa=\frac{2(2-\alpha)}{\alpha}$]{\includegraphics[width=0.45\textwidth]{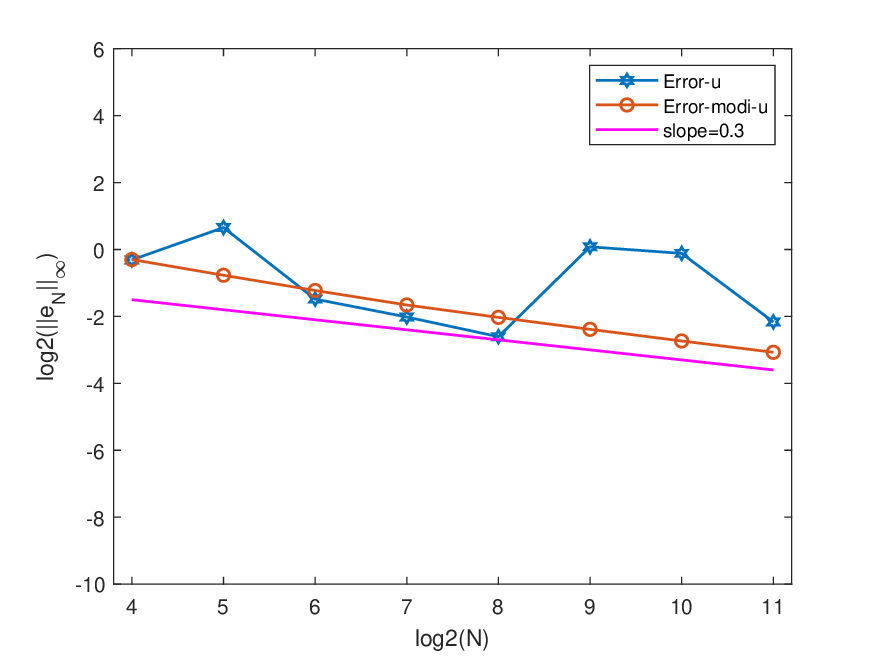}}
    \setlength{\abovecaptionskip}{0.0cm}
	\setlength{\belowcaptionskip}{0.0cm}
	\caption{Comparisons of the two fast collocation schemes. From top to bottom: $\alpha=1.0, 1.5, 1.7$ }
	\label{fig:alp_1}
\end{figure}
Although no rigorous theoretical conclusion is provided for the case $\alpha\in[1,2)$, we present numerical results to show the accuracy of the developed fast collocation schemes \eqref{sch:fast_Collocation} and \eqref{sch:fast_collocation_md}. Figure \ref{fig:alp_1} displays the convergence rates for different values of $\alpha \in[1,2)$. These results confirm that the modified scheme also achieves the expected convergence rate of $\mO(N^{-\min\{2-\alpha,\kappa\sigma\}})$. In contrast, the original fast collocation scheme yields less satisfactory results and, in some cases, even fails to produce reliable numerical approximations. This discrepancy is primarily attributed to the improved local approximation offered by the modified scheme. Therefore, the modified fast scheme \eqref{sch:fast_collocation_md} is recommended as a robust choice for model problem with $\alpha \in [1,2)$, regardless of whether uniform or nonuniform grids are employed. Furthermore, as seen when $\alpha\geq 4/3$, the advantage of using nonuniform grids diminishes, since in this case the achievable convergence rate is the same as that on uniform grids, i.e., $\min\{2-\alpha,\kappa\sigma\} = 2-\alpha$.

\section{Conclusions}
A fast collocation scheme was proposed for the numerical simulation of integral fractional Laplacian problem. The developed method is well-suited for general nonuniform grids. By carefully analyzing the mathematical structure of the coefficient matrix, we proved that the proposed scheme is uniquely solvable on general nonuniform grids for $\alpha\in(0,1)$. In addition, a modified scheme was introduced by improving the local-part approximation, and its unique solvability on uniform grids was analyzed. Furthermore, efficient implementations of both schemes based on any Krylov subspace iterative solver and fast matrix-vector multiplication without matrix assembling were carefully discussed. To further improve computational efficiency, a simple banded preconditioner was introduced to accelerate the iterative procedure. 

We also established a maximum nodal error bound of order $\mO(N^{-\min\{2-\alpha,\kappa\sigma\}})$ for the fast collocation scheme \eqref{sch:fast_Collocation} and the modified scheme \eqref{sch:fast_collocation_md} on symmetric graded grids. The analysis indicated that a mesh grading parameter $\kappa \geq (2-\alpha)/\sigma$ is required to achieve the optimal convergence order $\mO(N^{-(2-\alpha)})$. However, the current error analysis is only restricted to the case $\alpha\in(0,1)$, and a rigorous theoretical extension to $\alpha\in[1,2)$ and to general nonuniform grids will be pursued in future work.
Numerical experiments were provided to validate the theoretical results. We can observe that
the proposed fast collocation schemes achieve accuracy comparable to the direct collocation scheme, while significantly reducing both computational complexity and memory consumption. 
The findings also reveal that the modified scheme outperforms the original one in most scenarios mainly due to the better local approximations. 

\section*{CRediT authorship contribution statement}
\textbf{Meijie Kong}: Methodology, Formal analysis, Software, Writing- Original draft.
\textbf{Hongfei Fu}: Conceptualization, Supervision, Methodology, Writing- Reviewing and Editing, Funding acquisition.

\section*{Declaration of competing interest}
The authors declare that they have no competing interests.

\section*{Data availability } Data will be made available on request.

\section*{Acknowledgments}
This work was supported in part by the National Natural Science Foundation of China (Nos. 11971482, 12131014), by the Shandong Provincial Natural Science Foundation (No. ZR2024MA023), by the Fundamental Research Funds for the Central Universities (No. 202264006) and by the OUC Scientific Research Program for Young Talented Professionals.

\bibliographystyle{plain}
\bibliography{myref}
\end{document}